\providecommand{\U}[1]{\protect\rule{.1in}{.1in}}
\newtheorem{theorem}{Theorem}[section]
\theoremstyle{plain}
\newtheorem{corollary}[theorem]{Corollary}
\newtheorem{lemma}[theorem]{Lemma}
\newtheorem{proposition}[theorem]{Proposition}
\theoremstyle{definition}
\newtheorem{definition}[theorem]{Definition}
\newtheorem{remark}{Remark}
\newtheorem{example}{Example}
\numberwithin{equation}{section}
\begin{document}
\title[Compactifications of manifolds with boundary]{Compactifications of manifolds with boundary}
\author{Shijie Gu}
\address{Department of Mathematics\\
County College of Morris, Randolph, NJ, 07869}
\email{sgu@ccm.edu}
\author{Craig R. Guilbault}
\address{Department of Mathematical Sciences\\
University of Wisconsin-Milwaukee, Milwaukee, WI 53201}
\email{craigg@uwm.edu}
\thanks{This research was supported in part by Simons Foundation Grants 207264 and
427244, CRG}
\date{November 3, 2018}
\keywords{manifold, end, inward tame, completion, Z-compacification, Wall finiteness
obstruction, Whitehead torsion}

\begin{abstract}
This paper is concerned with compactifications of high-dimensional manifolds.
Siebenmann's iconic 1965 dissertation \cite{Sie65} provided necessary and
sufficient conditions for an open manifold $M^{m}$ ($m\geq6$) to be
compactifiable by addition of a manifold boundary. His theorem extends easily
to cases where $M^{m}$ is noncompact with compact boundary; however when
$\partial M^{m}$ is noncompact, the situation is more complicated. The goal
becomes a \textquotedblleft completion\textquotedblright\ of $M^{m}$, ie, a
compact manifold $\widehat{M}^{m}$ containing a compactum $A\subseteq\partial
M^{m}$ such that $\widehat{M}^{m}\backslash A\approx M^{m}$. Siebenmann did
some initial work on this topic, and O'Brien \cite{Obr83} extended that work
to an important special case. But, until now, a complete characterization had
yet to emerge. Here we provide such a characterization.

Our second main theorem involves $\mathcal{Z}$-compactifications. An important
open question asks whether a well-known set of conditions laid out by Chapman
and Siebenmann \cite{ChSi76} guarantee $\mathcal{Z}$-compactifiability for a
manifold $M^{m}$. We cannot answer that question, but we do show that those
conditions are satisfied if and only if $M^{m}\times\lbrack0,1]$ is
$\mathcal{Z}$-compactifiable. A key ingredient in our proof is the above
Manifold Completion Theorem---an application that partly explains our current
interest in that topic, and also illustrates the utility of the $\pi_{1}%
$-condition found in that theorem.

\end{abstract}
\maketitle

\section{Introduction}

This paper is about \textquotedblleft nice\textquotedblright%
\ compactifications of high-dimensional manifolds. The simplest of these
compactification is the addition of a boundary to an open manifold. That was
the topic of Siebenmann's famous 1965 dissertation \cite{Sie65}, the main
result of which can easily be extended to include noncompact manifolds with
compact boundaries. When $M^{m}$ has noncompact boundary, one may ask for a
compactification $\widehat{M}^{m}$ that \textquotedblleft
completes\textquotedblright\ $\partial M^{m}$. That is a more delicate
problem. Siebenmann addressed a very special case in his dissertation, before
O'Brien \cite{Obr83} characterized completable\ $n$-manifolds in the case
where $M^{m}$ and $\partial M^{m}$ are both 1-ended. Since completable
manifolds can have infinitely many (non-isolated) ends, O'Brien's theorem does
not imply a full characterization of completable $n$-manifolds. We obtain such
a characterization here, thereby completing an unfinished chapter in the study
of noncompact manifolds.

A second type of compactification considered here is the $\mathcal{Z}%
$-compactification. These are similar to the compactifications discussed
above---in fact, those are special cases---but $\mathcal{Z}$-compactifications
are more flexible. For example, a $\mathcal{Z}$-boundary for an open manifold
need not be a manifold, and a manifold that admits no completion can admit a
$\mathcal{Z}$-compactification. These compactifications have proven to be
useful in both geometric group theory and manifold topology, for example, in
attacks on the Borel and Novikov Conjectures. A major open problem (in our
minds) is a characterization of $\mathcal{Z}$-compactifiable manifolds. A set
of necessary conditions was identified by Chapman and Siebenmann
\cite{ChSi76}, and it is hoped that those conditions are sufficient. We prove
what might be viewed the next best thing: If $M^{m}$ satisfies the
Chapman-Siebenmann conditions (and $m\neq4$), then $M^{m}\times\left[
0,1\right]  $ is $\mathcal{Z}$-compactifiable. We do this by proving that
$M^{m}\times\left[  0,1\right]  $ is completable---an application that partly
explains the renewed interest in manifold completions, and also illustrates
the usefulness of the conditions found in the Manifold Completion Theorem.

\subsection{The Manifold Completion Theorem.}

An $m$-manifold $M^{m}$ with (possibly empty) boundary is \emph{completable}
if there exists a compact manifold $\widehat{M}^{m}$ and a compactum
$C\subseteq\partial\widehat{M}^{m}$ such that $\widehat{M}^{m}\backslash C$ is
homeomorphic to $M^{m}$. In this case $\widehat{M}^{m}$ is called a
\emph{(manifold) completion }of $M^{m}$. A primary goal of this paper is the
following characterization theorem for $m\geq6$. Definitions will be provided subsequently.

\begin{theorem}
[Manifold Completion Theorem]\label{Th: Completion Theorem}An $m$-manifold
$M^{m}$ ($m\geq6$) is completable if and only if

\begin{enumerate}
[(a)]

\item \label{Char2}$M^{m}$ is inward tame,

\item \label{Char1}$M^{m}$ is peripherally $\pi_{1}$-stable at infinity,

\item \label{Char3}$\sigma_{\infty}(M^{m})\in\underleftarrow{\lim}\left\{
\widetilde{K}_{0}(\pi_{1}(N))\mid N\text{ a clean neighborhood of
infinity}\right\}  $ is zero, and

\item \label{Char4}$\tau_{\infty}\left(  M^{m}\right)  \in\underleftarrow{\lim
}^{1}\left\{  \operatorname*{Wh}(\pi_{1}(N))\mid N\text{ a clean neighborhood
of infinity}\right\}  $ is zero.
\end{enumerate}
\end{theorem}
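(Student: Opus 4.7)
The plan is to establish necessity directly from a given completion, and then prove sufficiency by an inductive refinement scheme that extends Siebenmann's open-manifold program to the noncompact-boundary setting.

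For necessity, suppose $M^m = \widehat{M}^m \setminus C$ with $C \subseteq \partial\widehat{M}^m$. Take a nested cofinal sequence of compact neighborhoods $U_1 \supseteq U_2 \supseteq \cdots$ of $C$ in $\widehat{M}^m$, each a codimension-$0$ submanifold whose frontier meets $\partial\widehat{M}^m$ transversely. Removing $C$ produces clean neighborhoods of infinity $N_i = U_i \setminus C$ in $M^m$. Condition (\ref{Char2}) follows because each $N_i$ deformation retracts onto its frontier, which lies in a compact set of $M^m$. Condition (\ref{Char1}) is obtained by arranging the $U_i$ to be collar-like near $C$, so that the peripheral $\pi_1$-systems are eventually constant. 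Conditions (\ref{Char3}) and (\ref{Char4}) then hold essentially for free: each $N_i$ is homotopy equivalent to a compact manifold with boundary, and the bonding inclusions are simple-homotopy equivalences of the peripheral data, so the representative cocycles for $\sigma_\infty$ and $\tau_\infty$ vanish in the inverse (respectively derived) limit.

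For sufficiency, I would carry out a four-stage refinement in the spirit of Siebenmann \cite{Sie65} and O'Brien \cite{Obr83}. First, combine inward tameness (\ref{Char2}) with peripheral $\pi_1$-stability (\ref{Char1}) to select a cofinal sequence $\{N_i\}$ of clean neighborhoods of infinity whose peripheral fundamental-group systems are pro-stable; this gives each $N_i$ a well-defined Wall finiteness obstruction with respect to a stable coefficient group. Second, use the vanishing of $\sigma_\infty$ in (\ref{Char3}) to pass to a further cofinal subsequence on which these obstructions can be killed simultaneously, so each $N_i$ is homotopy equivalent to a compact manifold-with-boundary triad in $M^m$. Third, use the vanishing of $\tau_\infty$ in (\ref{Char4}) to arrange that the inclusions $N_{i+1} \hookrightarrow N_i$ are simple-homotopy equivalent to open collars on the new frontiers, i.e., the sequence becomes a \emph{homotopy collar}. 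Fourth, apply the $s$-cobordism theorem (using $m \geq 6$) to each cobordism $N_i \setminus \operatorname{int} N_{i+1}$, rel $\partial M^m$, to upgrade the homotopy collar to a genuine product, and splice the resulting product structures to produce the completion $\widehat{M}^m$.

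The hard part, I expect, is the interplay between the noncompact boundary $\partial M^m$ and the handle theory at infinity. In the classical open-manifold setting, the cobordisms at infinity are closed in the interior, and global $\pi_1$-stability suffices to run the $s$-cobordism machinery. Here $\partial M^m$ is noncompact, the cobordisms between successive $N_i$ carry nontrivial corner structure, and the added compactum $C$ need not be a manifold, so a direct appeal to the $s$-cobordism theorem on a neat manifold cobordism is unavailable. The \emph{peripheral} nature of the $\pi_1$-stability in (\ref{Char1}) is calibrated precisely to give enough control on the part of each $N_i$ meeting $\partial M^m$ to perform handle trades there without disturbing the interior obstructions, while the appearance of $\tau_\infty$ in $\lim^1$ (rather than a single $\operatorname{Wh}$-group) encodes the flexibility to absorb Whitehead torsions by regrouping along a cofinal subsequence. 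Verifying that these ingredients genuinely combine into a collar that is compatible along both the interior and the boundary will be the principal technical burden, and is presumably what has held up a full characterization since Siebenmann's original work.
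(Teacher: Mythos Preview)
Your overall architecture---verify necessity from a completion, then for sufficiency refine a cofinal sequence of neighborhoods of infinity until the intervening cobordisms are products---matches the paper. Necessity is handled essentially as you describe. For sufficiency, however, you have misidentified where the real work lies, and this leads to one genuine gap and one unnecessary complication.

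The gap is in your Stage~2. Knowing that $\sigma_\infty(M^m)=0$ tells you each clean neighborhood of infinity $N_i$ has finite homotopy type, but this does \emph{not} by itself arrange that $\operatorname{Fr}N_i\hookrightarrow N_i$ is a homotopy equivalence, which is what is actually needed to make the $W_i=\overline{N_i\setminus N_{i+1}}$ into h-cobordisms. Something substantive must be done to modify the $N_i$. The paper's key move is one you do not anticipate: for each component $N_i^j$, \emph{delete} $\partial M^m$ and observe (via Lemmas on $A$-connectedness and $A$-$\pi_1$-stability) that $N_i^j\setminus\partial M^m$ is a $1$-ended manifold with compact boundary which is inward tame, has stable $\operatorname{pro}$-$\pi_1$, and has vanishing end obstruction. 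This is exactly the hypothesis of Siebenmann's own Relativized Main Theorem, so $N_i^j\setminus\partial M^m$ contains a genuine open collar neighborhood of infinity. Absorbing the complement of that collar into $C_i$ yields the desired homotopy equivalence $\operatorname{Fr}N_i\hookrightarrow N_i$. Thus peripheral $\pi_1$-stability is calibrated not to support new ``handle trades rel $\partial M^m$,'' as you suggest, but precisely so that deleting $\partial M^m$ reduces everything to Siebenmann's classical situation.

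The second point is that, once this collar structure is in place, the cobordisms $W_i$ are honest \emph{compact} nice relative cobordisms, and the ordinary Relative s-cobordism Theorem applies directly---no noncompact or corner-aware handle theory is required. What \emph{is} needed, and what your outline omits, is a separate argument that the other inclusion $\partial_M W_i\cup\operatorname{Fr}N_{i+1}\hookrightarrow W_i$ is also a homotopy equivalence; the paper supplies this via a Poincar\'e duality lemma (using the collar on $N_i\setminus\partial M^m$ to push $W_i$ into that side, then invoking \cite{Eps66} for $\pi_1$ and duality for the higher homotopy). Your Stages~3 and~4 (adjusting torsions using $\tau_\infty=0$, then applying the s-cobordism theorem and splicing) are correct as stated.
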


Together, Conditions (\ref{Char2}) and (\ref{Char3}) ensure that (nice)
neighborhoods of infinity have finite homotopy type, while Condition
(\ref{Char4}) allows one to upgrade certain, naturally arising, homotopy
equivalences to simple homotopy equivalences. These conditions have arisen in
other contexts, such as \cite{Sie65} and \cite{ChSi76}. 

Condition (\ref{Char1}) can be thought of as \textquotedblleft$\pi_{1}%
$-stability rel boundary\textquotedblright; it seems unique to the situation
at hand. In the special case where $M^{m}$ is 1-ended and $N_{0}\supseteq
N_{1}\supseteq\cdots$ is a cofinal sequence of (nice) connected neighborhoods
of infinity, it demands that each sequence%
\[
\pi_{1}\left(  \partial_{M}N_{i}\cup N_{i+1}\right)  \leftarrow\pi_{1}\left(
\partial_{M}N_{i}\cup N_{i+2}\right)  \leftarrow\pi_{1}\left(  \partial
_{M}N_{i}\cup N_{i+3}\right)  \leftarrow\cdots
\]
be stable where $\partial_{M}N_{i}$ denotes $\partial M^{m}\cap N_{i}$. This
reduces to ordinary $\pi_{1}$-stability when $\partial M^{m}$ is compact. A
complete discussion of this condition can be found in
\S \ref{Section: Peripheral stability}.

\begin{remark}
\label{Remark: Comments on Completion Theorem}Several comments are in order:

\begin{enumerate}
\item Dimensions $\leq5$ are discussed briefly in
\S \ref{Section: Low dimensional results}; our main focus is $m\geq6$.

\item If $\partial M^{m}$ is compact and $M^{m}$ is inward tame then $M^{m}$
has finitely many ends (see
\S \ref{Section: finite domination and inward tameness}), so the ends are
isolated and disjoint from $\partial M^{m}$. In that case Theorem
\ref{Th: Completion Theorem} reduces to Siebenmann's dissertation
\cite{Sie65}. As such, Theorem \ref{Th: Completion Theorem} can be viewed as a
generalization of \cite{Sie65}.

\item \label{Item on OBrien in Remark 1}The special case of the Manifold
Completion Theorem, where $M^{m}$ and $\partial M^{m}$ are $1$-ended, was
proved by O'Brien \cite{Obr83}; that is where \textquotedblleft peripheral
$\pi_{1}$-stability\textquotedblright\ was first defined. But since candidates
for completion can be infinite-ended (e.g., let $C\subseteq S^{m-1}$ be a
Cantor set and $M^{m}=B^{m}\backslash C)$, the general theorem is not a
corollary. In the process of generalizing \cite{Obr83}, we simplify the proof
presented there and correct an error in the formulation of Condition
(\ref{Char3}). We also exhibit some interesting examples which answer a
question posed by O'Brien about a possible weakening Condition (\ref{Char1}).

\item If Condition (\ref{Char1}) is removed from Theorem
\ref{Th: Completion Theorem}, one arrives at Chapman and Siebenmann's
conditions for characterizing $\mathcal{Z}$-compactifiable Hilbert cube
manifolds \cite{ChSi76}. A $\mathcal{Z}$-compactification theorem for
finite-dimensional manifolds is the subject of the second main result of this
paper. We will describe that theorem and the necessary definitions
now.\bigskip
\end{enumerate}
\end{remark}

\subsection{The Stable $\mathcal{Z}$-compactification Theorem for Manifolds}

To extend the idea of a completion to Hilbert cube manifolds Chapman and
Siebenmann introduced the notion of a \textquotedblleft$\mathcal{Z}%
$-compactification\textquotedblright. A compactification $\widehat{X}=X\sqcup
Z$ of a space $X$ is a $\mathcal{Z}$\emph{-compactification }if there is a
homotopy $H:\widehat{X}\times\left[  0,1\right]  \rightarrow\widehat{X}$ such
that $H_{0}=\operatorname*{id}_{\widehat{X}}$ and $H_{t}\left(  \widehat{X}%
\right)  \subseteq X$ for all $t>0$. Subsequently, this notion has been
fruitfully applied to more general spaces---notably, finite-dimensional
manifolds and complexes; see, for example, \cite{BeMe91},\cite{CaPe95}%
,\cite{FeWe95},\cite{AnGu99}, and \cite{FaLa05}. A completion of of a
finite-dimensional manifold is a $\mathcal{Z}$-compactification, but a
$\mathcal{Z}$-compactification need not be a completion. In fact, a manifold
that allows no completion can still admit a $\mathcal{Z}$-compactification;
the exotic universal covers constructed by Mike Davis are some of the most
striking examples (just apply \cite{ADG97}). Such manifolds must satisfy
Conditions (\ref{Char2}), (\ref{Char3}) and (\ref{Char4}), but the converse
remains open.\medskip

\noindent\textbf{Question. }\emph{Does every finite-dimensional manifold that
satisfies Conditions (\ref{Char2}), (\ref{Char3}) and (\ref{Char4}) of Theorem
\ref{Th: Completion Theorem} admit a }$\mathcal{Z}$\emph{-compactification?}%
\medskip

This question was posed more generally in \cite{ChSi76} for locally compact
ANRs, but in \cite{Gui01} a 2-dimensional polyhedral counterexample was
constructed. The manifold version remains open. In this paper, we prove a best
possible \textquotedblleft stabilization theorem\textquotedblright\ for manifolds.

\begin{theorem}
[Stable $\mathcal{Z}$-compactification Theorem for Manifolds]%
\label{Th: Stabilization}An $m$-manifold $M^{m}$ ($m\geq5$) satisfies
Conditions (\ref{Char2}), (\ref{Char3}) and (\ref{Char4}) of Theorem
\ref{Th: Completion Theorem}, if an only if $M^{m}\times\left[  0,1\right]  $
admits a $\mathcal{Z}$-compactification. In fact, $M^{m}\times\left[
0,1\right]  $ is completable if and only if $M^{m}$ satisfies those conditions.
\end{theorem}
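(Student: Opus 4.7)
\emph{Easy direction.} Suppose $M \times [0,1]$ admits a $\mathcal{Z}$-compactification. The Chapman--Siebenmann necessity conditions give that $M \times [0,1]$ itself satisfies (\ref{Char2})--(\ref{Char4}), and the plan is to descend these to $M$. Since $M$ is a deformation retract of $M \times [0,1]$, cofinal clean neighborhoods of infinity in $M \times [0,1]$ may be taken as products $N \times [0,1]$ which retract onto $N$; this passes finite domination down. Under the canonical $\pi_1$-isomorphism $\pi_1(N) \cong \pi_1(N \times [0,1])$, the inverse limits of $\widetilde{K}_0$ and $\mathrm{Wh}$ agree, so $\sigma_\infty(M)$ and $\tau_\infty(M)$ vanish as well.

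\emph{Hard direction.} Assume (\ref{Char2})--(\ref{Char4}) hold for $M$. The plan is to apply Theorem~\ref{Th: Completion Theorem} to $M \times [0,1]$ (dimension $m+1 \geq 6$), concluding that $M \times [0,1]$ is completable---a fortiori, $\mathcal{Z}$-compactifiable. Conditions (\ref{Char2}), (\ref{Char3}), (\ref{Char4}) for $M \times [0,1]$ transfer from $M$ by the naturality arguments of the easy direction (read in reverse): product with $[0,1]$ preserves finite domination, and the canonical $\pi_1$-isomorphism identifies the pro-obstructions.

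The main obstacle is verifying peripheral $\pi_1$-stability (\ref{Char1}) for $M \times [0,1]$; this is a genuinely new condition which may well fail for $M$ itself (as illustrated by Davis-type examples). Using cofinal product neighborhoods $N'_i := N_i \times [0,1]$ of infinity in $M \times [0,1]$, one has
$$\partial_{M \times [0,1]} N'_i \cup N'_{i+j} \;=\; \bigl(\partial_M N_i \cup N_{i+j}\bigr) \times [0,1] \,\cup\, N_i \times \{0,1\},$$
which is homotopy equivalent to the \emph{double} $N_i \sqcup_{\partial_M N_i \cup N_{i+j}} N_i$ of two copies of $N_i$ glued along $\partial_M N_i \cup N_{i+j}$. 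Van Kampen gives
$$\pi_1\bigl(\partial_{M \times [0,1]} N'_i \cup N'_{i+j}\bigr) \;\cong\; \pi_1(N_i) \, *_{\pi_1(\partial_M N_i \cup N_{i+j})} \, \pi_1(N_i),$$
an amalgamated product that depends only on the image $H_j := \mathrm{Im}\bigl(\pi_1(\partial_M N_i \cup N_{i+j}) \to \pi_1(N_i)\bigr)$---a descending chain of subgroups of $\pi_1(N_i)$. The key technical step will be to choose the cofinal sequence $\{N_i\}$ in $M$ so that $H_j$ is eventually constant; this requires handle-theoretic rearrangements at infinity that consume the vanishing of the Wall and Whitehead obstructions. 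The point is that the \emph{doubling} structure arising from the $[0,1]$-factor converts one-sided $\pi_1$-instability of $M$ into two-sided data whose amalgamation is pro-stable---that is precisely why $\times [0,1]$ buys peripheral stability. Once (\ref{Char1})--(\ref{Char4}) are established for $M \times [0,1]$, Theorem~\ref{Th: Completion Theorem} delivers the desired manifold completion.
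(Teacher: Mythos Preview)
Your treatment of the easy direction and the transfer of Conditions (\ref{Char2}), (\ref{Char3}), (\ref{Char4}) to $M\times[0,1]$ is fine; the paper handles this the same way (citing \cite{ChSi76}). The issue is entirely in your verification of peripheral $\pi_1$-stability for $M\times[0,1]$.

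Your van~Kampen analysis of the double is correct as far as it goes: the peripheral pro-$\pi_1$ system for $N_i\times[0,1]$ is indeed pro-isomorphic to the sequence $\pi_1(N_i)\ast_{H_j}\pi_1(N_i)$, and since the bonding maps are surjections, stability is equivalent to the images $H_j=\operatorname{Im}\bigl(\pi_1(\partial_M N_i\cup N_{i+j})\to\pi_1(N_i)\bigr)$ being eventually constant. But your proposed mechanism for achieving this---``handle-theoretic rearrangements at infinity that consume the vanishing of the Wall and Whitehead obstructions''---is both unjustified and unnecessary. You give no indication of how such rearrangements would force the $H_j$ to stabilize, and in fact no rearrangement is needed: the sequence $\{\pi_1(\partial_M N_i\cup N_{i+j})\}_j$ represents $\operatorname{pro}\text{-}\pi_1$ of the inward tame manifold $N_i\backslash\partial_M N_i$ (Lemmas~\ref{Lemma: relA pro-pi1 versus Q-A pro-pi1} and~\ref{Lemma: inward tameness of deleted manifolds}), and inward tameness alone forces this pro-system to be semistable, i.e.\ Mittag--Leffler, which is exactly the statement that the images $H_j$ eventually stabilize. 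Conditions (\ref{Char3}) and (\ref{Char4}) play no role here.

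The paper takes a different and more efficient route. Rather than computing the peripheral pro-$\pi_1$ of $N_i\times[0,1]$ directly, it invokes Lemma~\ref{Lemma: relA pro-pi1 versus Q-A pro-pi1} to translate the question into ordinary $\pi_1$-stability of the one-ended open manifold $\operatorname{int}_M(N_i^j)\times(0,1)$, and then cites the main technical result of \cite{Gui07b}: crossing any inward tame manifold with an open interval yields stable pro-$\pi_1$ at infinity. This gives the stronger statement (Proposition~\ref{Prop: peripheral pi1-stability for MxR}) that inward tameness of $M^m$ \emph{by itself} guarantees peripheral $\pi_1$-stability of $M^m\times[0,1]$.
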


\begin{remark}
In \cite{Fer00}, Ferry showed that if a locally finite $k$-dimensional
polyhedron $X$ satisfies Conditions (\ref{Char2}), (\ref{Char3}) and
(\ref{Char4}), then $X\times\left[  0,1\right]  ^{2k+5}$ is $\mathcal{Z}%
$-compactifiable. Theorem \ref{Th: Completion Theorem} can be viewed as a
sharpening of Ferry's theorem in cases where $X$ is a manifold.\bigskip
\end{remark}

\subsection{Outline of this paper}

The remainder of this paper is organized as follows. In
\S \ref{Section: Low dimensional results} we review the status of Theorem
\ref{Th: Completion Theorem} in dimensions $<6$. In
\S \ref{Section: Terminology} we fix some terminology and notation; then in
\S \ref{Section: Peripheral stability}-\ref{Section: Whitehead obstruction},
we carefully discuss each of the four conditions present in Theorem
\ref{Th: Completion Theorem}. In
\S \ref{Section: Proof of Completion Theorem: necessity}%
-\ref{Section: Proof of Completion Theorem: sufficiency} we prove Theorem
\ref{Th: Completion Theorem}, and in \S \ref{Section: Stabilization Theorem}
we prove Theorem \ref{Th: Stabilization}. In \S \ref{Section: Counterexample}
we provide a counterexample to a question posed in \cite{Obr83} about a
possible relaxation of Condition (\ref{Char1}), and in
\S \ref{Section: Equivalence of various peripheral pi1-stability conditions}
we provide the proof of a technical lemma that was postponed until the end of
the paper.

\section{Manifold completions in dimensions $<6$%
\label{Section: Low dimensional results}}

The Manifold Completion Theorem is true in dimensions $\leq3$, but much
simpler versions are possible in those dimensions. For example, Tucker
\cite{Tuc74} showed that a 3-manifold can be completed if and only if each
component of each clean neighborhood of infinity has finitely generated
fundamental group---a condition that is implied by inward tameness alone.

Since we have been unable to find the optimal $2$-dimensional completion
theorem in the literature, we take this opportunity to provide such a theorem.
If $M^{2}$ has finitely generated first homology (e.g., if $M^{2}$ is inward
tame), then by classical work (see \cite{Ker23} and \cite{Ric63})
$\operatorname*{int}(M^{2})\approx\Sigma^{2}-P$, where $\Sigma^{2}$ is a
closed surface and $P$ is a finite set of points. Therefore, $M^{2}$ contains
a compact codimension 1 submanifold $C$ such that each of the the components
$\left\{  N_{i}\right\}  _{i=1}^{k}$ of $\overline{M^{2}\backslash C\text{ }}%
$is a noncompact manifold whose frontier is a circle onto which it deformation
retracts. Complete the $N_{i}$ individually as follows:

\begin{itemize}
\item[i)] If $N_{i}$ contains no portion of $\partial M^{2}$, add a circle at
infinity; and

\item[ii)] If $N_{i}$ contains components of $\partial M^{2}$, perform the
Ker\'{e}kj\'{a}rt\'{o}-Freudenthal end-point compactification to $N_{i}$.
\end{itemize}

\noindent Classification 9.26 of \cite{CKS12}, applied to each $N_{i}$ of type
ii), ensures that the result is a manifold completion of $M^{2}$. As a
consequence, we have the following:

\begin{theorem}
A connected $2$-manifold $M^{2}$ is completable if and only if $H_{1}\left(
M^{2}\right)  $ is finitely generated; in particular, Theorem
\ref{Th: Completion Theorem} is valid when $n=2$.
\end{theorem}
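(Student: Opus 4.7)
The plan is to prove both directions of the equivalence between completability of a connected $2$-manifold $M^{2}$ and finite generation of $H_{1}(M^{2})$, and then deduce that this equivalent condition is identical to the conjunction of the four conditions of \thmref{Th: Completion Theorem} in dimension two.

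For the necessity direction, if $M^{2}$ is completable with completion $\widehat{M}^{2}$ and deleted compactum $C\subseteq\partial\widehat{M}^{2}$, then $\widehat{M}^{2}$ is a compact $2$-manifold with finitely generated first homology. I would build an open collar neighborhood $U$ of $C$ in $\widehat{M}^{2}$ (shrinking slightly inward from the boundary) so that $M^{2}$ deformation retracts onto the compact subsurface $\widehat{M}^{2}\setminus U$; this immediately gives finite generation of $H_{1}(M^{2})$. Alternatively, the same conclusion follows from the fact that completability forces inward tameness, which in dimension two forces $H_{1}$ to be finitely generated via the classification of surfaces.

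For the sufficiency direction, my plan is to follow the construction already outlined: invoke the Kerékjártó-Richards theorem to write $\operatorname{int}(M^{2})\approx\Sigma^{2}\setminus P$ for a closed surface $\Sigma^{2}$ and a finite set $P$. Using this normal form I would locate a compact, codimension-$1$ submanifold $C\subset M^{2}$ (a finite disjoint union of circles) whose complement in $M^{2}$ splits into finitely many noncompact pieces $N_{1},\ldots,N_{k}$, each possessing a single frontier circle onto which it deformation retracts. Then I would complete each $N_{i}$ separately: those disjoint from $\partial M^{2}$ are half-open annuli and get a circle at infinity appended; those meeting $\partial M^{2}$ receive the Kerékjártó-Freudenthal end-point compactification. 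Reassembling the $N_{i}$ with the compact core bounded by $C$ yields the completion $\widehat{M}^{2}$.

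The main obstacle I anticipate is the second case of the local completion step, in which $N_{i}$ carries nonempty, possibly noncompact boundary. Here the end-point compactification must be shown to produce a topological $2$-manifold in which the newly added points land in $\partial\widehat{M}^{2}$ rather than in the interior. My approach is to quote Classification~9.26 of \cite{CKS12}, whose hypotheses exactly match the structure of $N_{i}$ and whose conclusion delivers the required manifold completion with the compactifying points as boundary points. Once the equivalence with finite generation of $H_{1}$ is in hand, the $n=2$ case of \thmref{Th: Completion Theorem} is immediate: completability trivially implies each of (\ref{Char2})--(\ref{Char4}), while conversely inward tameness in dimension two implies $H_{1}(M^{2})$ is finitely generated, and the construction above then yields a completion without any need for the $\pi_{1}$-stability, finiteness-obstruction, or Whitehead-torsion information.
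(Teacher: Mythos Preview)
Your proposal is correct and follows essentially the same argument as the paper: invoke Ker\'{e}kj\'{a}rt\'{o}--Richards to realize $\operatorname{int}(M^{2})$ as $\Sigma^{2}\setminus P$, cut along a finite union of circles to isolate annular and boundary-carrying ends, complete the former by adding circles at infinity and the latter by the Freudenthal end-point compactification, and appeal to Classification~9.26 of \cite{CKS12} for the manifold structure in the boundary case. The only addition in your write-up is an explicit treatment of the necessity direction, which the paper leaves implicit.
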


In dimension 5 our proof of Theorem \ref{Th: Completion Theorem} goes through
verbatim, provided it is always possible to work in neighborhoods of infinity
with boundaries in which Freedman's 4-dimensional Disk Embedding Theorem
holds. That issue is discussed in \cite{Qui82b} and \cite[\S 11.9]{FrQu90} in
the less general setting of Siebenmann's thesis, but the issues here are the
same. In the language of \cite{FrQu90}: Theorem \ref{Th: Completion Theorem}
holds provided Condition (\ref{Char1}) is strengthened to require the
existence of arbitrarily small neighborhoods of infinity with stable
peripheral $\operatorname*{pro}$-$\pi_{1}$ groups that are \textquotedblleft
good\textquotedblright. A caveat is that, whenever \cite{Fre82} is applied,
conclusions are topological, rather than PL or smooth.

Remarkably, Siebenmann's thesis fails in dimension 4 (see \cite{Wei87} and
\cite{KwSc88}). Counterexamples to his theorem are, of course, counterexamples
to Theorem \ref{Th: Completion Theorem} as well.\medskip

As for low-dimensional versions of Theorem \ref{Th: Stabilization}: if
$m\leq3$ and $M^{m}$ satisfies Condition (\ref{Char2}) then $M^{m}$ is
completable (hence $\mathcal{Z}$-compactifiable), so $M^{m}\times\left[
0,1\right]  $ is completable and $\mathcal{Z}$-compactifiable. If $m=4$, then
$M^{4}\times\left[  0,1\right]  $ is a 5-manifold, which (see
\S \ref{Section: Stabilization Theorem}) satisfies the conditions of Theorem
\ref{Th: Completion Theorem}. Whether that leads to a completion depends on
4-dimensional issues, in particular the \textquotedblleft
goodness\textquotedblright\ of the (stable) peripheral fundamental groups of
the ends of $M^{4}\times\left[  0,1\right]  $. Those groups are determined by,
but are not the same as, the fundamental groups at the ends of $M^{4}$. If
desired, a precise group-theoretic condition can be formulated from
Proposition \ref{Prop: peripheral pi1-stability for MxR} and \cite{Gui07b}.

\section{Conventions, notation, and terminology\label{Section: Terminology}}

For convenience, all manifolds are assumed to be piecewise-linear (PL). That
assumption is particularly useful for the topic at hand, since numerous
instances of \textquotedblleft smoothing corners\textquotedblright\ would be
required in the smooth category (an issue that is covered nicely in
\cite{Obr83}). With proper attention to such details, analogous theorems can
be obtained in the smooth or topological category. Unless stated otherwise, an
$m$-manifold $M^{m}$ is permitted to have a boundary, denoted $\partial M^{m}%
$. We denote the \emph{manifold interior} by $\operatorname*{int}M^{m}$. For
$A\subseteq M^{m}$, the \emph{point-set interior} will be denoted
$\operatorname*{Int}_{M^{m}}A$ and the \emph{frontier} by $\operatorname*{Fr}%
_{M^{m}}A$ (or for conciseness, $\operatorname*{Int}_{M}A$ and the
\emph{frontier} by $\operatorname*{Fr}_{M}A$). A \emph{closed manifold} is a
compact boundaryless manifold, while an \emph{open manifold} is a non-compact
boundaryless manifold.

For $q<m$, a $q$-dimensional submanifold $Q^{q}\subseteq M^{m}$ is
\emph{properly embedded} if it is a closed subset of $M^{m}$ and $Q^{q}%
\cap\partial M^{m}=\partial Q^{q}$; it is \emph{locally flat} if each
$p\in\operatorname*{int}Q^{q}$ has a neighborhood pair homeomorphic to
$\left(
\mathbb{R}
^{m},%
\mathbb{R}
^{q}\right)  $ and each $p\in\partial Q^{q}$ has a neighborhood pair
homeomorphic to $\left(
\mathbb{R}
_{+}^{m},%
\mathbb{R}
_{+}^{q}\right)  $. By this definition, the only properly embedded codimension
$0$ submanifolds of $M^{m}$ are unions of its connected components; a more
useful type of codimension $0$ submanifold is the following: a codimension $0$
submanifold $Q^{m}\subseteq M^{m}$ is \emph{clean} if it is a closed subset of
$M^{m}$ and $\operatorname*{Fr}_{M}Q^{m}$ is a properly embedded locally flat
(hence, bicollared) $\left(  m-1\right)  $-submanifold of $M^{m}$. In that
case, $\overline{M^{m}\backslash Q^{m}}$ is also clean, and
$\operatorname*{Fr}_{M}Q^{m}$ is a clean codimension $0$ submanifold of both
$\partial Q^{m}$ and $\partial(\overline{M^{m}\backslash Q^{m}})$.

When the dimension of a manifold or submanifold is clear, we sometimes omit
the superscript; for example, denoting a clean codimension $0$ submanifold by
$Q$. Similarly, when the ambient space is clear, we denote (point-set)
interiors and frontiers by $\operatorname*{Int}A$ and $\operatorname*{Fr}A$

For any codimension $0$ clean submanifold $Q\subseteq M^{m}$, let
$\partial_{M}Q$ denote $Q\cap\partial M^{m}$; alternatively $\partial
_{M}Q=\partial Q\backslash\operatorname*{int}(\operatorname*{Fr}Q)$.
Similarly, we will let $\operatorname*{int}_{M}Q$ denote $Q\cap
\operatorname*{int}M^{m}$; alternatively $\operatorname*{int}_{M}%
Q=Q\backslash\partial M^{m}$.

\section{Ends, $\operatorname*{pro}$-$\pi_{1}$, and the peripheral $\pi_{1}%
$-stability condition\label{Section: Peripheral stability}}

\subsection{Neighborhoods of infinity, partial neighborhoods of infinity, and
ends}

Let $M^{m}$ be a connected manifold. A \emph{clean neighborhood of infinity}
in $M^{m}$ is a clean codimension $0$ submanifold $N\subseteq M^{m}$ for which
$\overline{M^{m}\backslash N}$ is compact. Equivalently, a clean neighborhood
of infinity is a set of the form $\overline{M^{m}\backslash C}$ where $C$ is a
compact clean codimension $0$ submanifold of $M^{m}$. A \emph{clean compact
exhaustion} of $M^{m}$ is a sequence $\left\{  C_{i}\right\}  _{i=1}^{\infty}$
of clean compact connected codimension $0$ submanifolds with $C_{i}%
\subseteq\operatorname*{Int}_{M}C_{i+1}$ and $\cup C_{i}=M^{m}$. By letting
$N_{i}=\overline{M^{m}\backslash C_{i}}$ we obtain the corresponding
\emph{cofinal sequence of clean neighborhoods of infinity}. Each such $N_{i}$
has finitely many components $\left\{  N_{i}^{j}\right\}  _{j=1}^{k_{i}}$. By
enlarging $C_{i}$ to include all of the compact components of $N_{i}$, we can
arrange that each $N_{i}^{j}$ is noncompact; then, by drilling out regular
neighborhoods of arcs connecting the various components of each
$\operatorname*{Fr}_{M}N_{i}^{j}$ (further enlarging $C_{i}$), we can also
arrange that each $\operatorname*{Fr}_{M}N_{i}^{j}$ is connected. A clean
$N_{i}$ with these latter two properties is called a $0$%
\emph{-neigh\-bor\-hood of infinity}. Most constructions in this paper will
begin with a clean compact exhaustion of $M^{m}$ with a corresponding cofinal
sequence of clean \smallskip$0$-neigh\-bor\-hoods of infinity.

Assuming the above arrangement, an \emph{end }$\varepsilon$ of $M^{m}$ is
determined by a nested sequence $\left(  N_{i}^{k_{i}}\right)  _{i=1}^{\infty
}$ of components of the $N_{i}$; each component is called a \emph{neighborhood
of }$\varepsilon$. More generally, any subset of $M^{m}$ that contains one of
the $N_{i}^{k_{i}}$ is a neighborhood of $\varepsilon$, and any nested
sequence $\left(  W_{j}\right)  _{j=1}^{\infty}$ of connected neighborhoods of
$\varepsilon$, for which $\cap W_{j}=\varnothing$, also determines the end
$\varepsilon$. A more thorough discussion of ends can be found in
\cite{Gui16}. Here we will abuse notation slightly by writing $\varepsilon
=\left(  N_{i}^{k_{i}}\right)  _{i=1}^{\infty}$, keeping in mind that a
sequence representing $\varepsilon$ is not unique.

At times we will have need to discuss components $\left\{  N^{j}\right\}  $ of
a neighborhood of infinity $N$ without reference to a specific end of $M^{m}$.
In that situation, we will refer to the $N^{j}$ as a \emph{partial
neighborhoods of infinity for }$M^{m}$ (\emph{partial }\smallskip
$0$\emph{-neigh\-bor\-hoods }if\emph{ }$N$ is a \smallskip$0$-neigh\-bor\-hood
of infinity). Clearly every noncompact clean connected codimension $0$
submanifold of $M^{m}$ with compact frontier is a partial neighborhood of
infinity with respect to an appropriately chosen compact $C$; if its frontier
is connected it is a partial \smallskip$0$-neigh\-bor\-hood of
infinity.\medskip

\subsection{The fundamental group of an end}

For each end $\varepsilon$ of $M^{m}$, we will define the \emph{fundamental
group at} $\varepsilon$ by using inverse sequences. Two inverse sequences of
groups $A_{0}\overset{\alpha_{1}}{\longleftarrow}A_{1}\overset{\alpha
_{2}}{\longleftarrow}A_{3}\overset{\alpha_{3}}{\longleftarrow}\cdots$ and
$B_{0}\overset{\beta_{1}}{\longleftarrow}B_{1}\overset{_{\beta_{2}%
}}{\longleftarrow}B_{3}\overset{\beta_{3}}{\longleftarrow}\cdots$ are
\emph{pro-isomorphic} if they contain subsequences that fit into a commutative
diagram of the form
\begin{equation}
\begin{diagram} G_{i_{0}} & & \lTo^{\lambda_{i_{0}+1,i_{1}}} & & G_{i_{1}} & & \lTo^{\lambda_{i_{1}+1,i_{2}}} & & G_{i_{2}} & & \lTo^{\lambda_{i_{2}+1,i_{3}}}& & G_{i_{3}}& \cdots\\ & \luTo & & \ldTo & & \luTo & & \ldTo & & \luTo & & \ldTo &\\ & & H_{j_{0}} & & \lTo^{\mu_{j_{0}+1,j_{1}}} & & H_{j_{1}} & & \lTo^{\mu_{j_{1}+1,j_{2}}}& & H_{j_{2}} & & \lTo^{\mu_{j_{2}+1,j_{3}}} & & \cdots \end{diagram} \label{basic ladder diagram}%
\end{equation}
where the connecting homomorphisms in the subsequences are (as always)
compositions of the original maps. An inverse sequence is \emph{stable }if it
is pro-isomorphic to a constant sequence $C\overset{\operatorname*{id}%
}{\longleftarrow}C\overset{\operatorname*{id}}{\longleftarrow}%
C\overset{\operatorname*{id}}{\longleftarrow}\cdots$. Clearly, an inverse
sequence is pro-isomorphic to each of its subsequences; it is stable if and
only if it contains a subsequence for which the images stabilize in the
following manner
\begin{equation}
\begin{diagram} G_{0}& & \lTo^{{\lambda}_{1}} & & G_{1} & & \lTo^{{\lambda}_{2}} & & G_{2} & & \lTo^{{\lambda}_{3}} & & G_{3} &\cdots\\ & \luTo & & \ldTo & & \luTo & & \ldTo & & \luTo & & \ldTo & \\ & & \operatorname{Im}\left( \lambda_{1}\right) & & \lTo^{\cong} & & \operatorname{Im}\left( \lambda _{2}\right) & &\lTo^{\cong} & & \operatorname{Im}\left( \lambda_{3}\right) & & \lTo^{\cong} & &\cdots & \\ \end{diagram} \label{Standard stability ladder}%
\end{equation}
where all unlabeled homomorphisms are restrictions or inclusions. (Here we
have simplified notation by relabelling the entries in the subsequence with
integer subscripts.)

Given an end $\varepsilon=\left(  N_{i}^{k_{i}}\right)  _{i=1}^{\infty}$,
choose a ray $r:[1,\infty)\rightarrow M^{m}$ such that $r\left(
[i,\infty)\right)  \subseteq N_{i}^{k_{i}}$ for each integer $i>0$ and form
the inverse sequence
\begin{equation}
\pi_{1}\left(  N_{1}^{k_{1}},r\left(  1\right)  \right)  \overset{\lambda
_{2}}{\longleftarrow}\pi_{1}\left(  N_{2}^{k_{2}},r\left(  2\right)  \right)
\overset{\lambda_{3}}{\longleftarrow}\pi_{1}\left(  N_{3}^{k_{3}},r\left(
3\right)  \right)  \overset{\lambda_{4}}{\longleftarrow}\cdots
\label{sequence: pro-pi1}%
\end{equation}
where each $\lambda_{i}$ is an inclusion induced homomorphism composed with
the change-of-basepoint isomorphism induced by the path $\left.  r\right\vert
_{\left[  i-1,i\right]  }$. We refer to $r$ as the \emph{base ray} and the
sequence (\ref{sequence: pro-pi1}) as a representative of the
\textquotedblleft fundamental group at $\varepsilon$ based at $r$%
\textquotedblright\ ---denoted $\operatorname*{pro}$-$\pi_{1}\left(
\varepsilon,r\right)  $. Any similarly obtained representation (e.g., by
choosing a different sequence of neighborhoods of $\varepsilon$) using the
same base ray can be seen to be pro-isomorphic. We say \emph{the fundamental
group at }$\varepsilon$\emph{ is stable} if (\ref{sequence: pro-pi1}) is a
stable sequence. A key observation from the theory of ends is that stability
of $\operatorname*{pro}$-$\pi_{1}(\varepsilon,r$) depends on \emph{neither
}the choice of neighborhoods nor that of the base ray. See \cite{Gui16} or
\cite{Geo08}.\medskip

\subsection{Relative connectedness, relative $\pi_{1}$-stability, and the
peripheral $\pi_{1}$-stability condition}

Let $Q$ be a manifold and $A\subseteq\partial Q$. We say that $Q$ is\emph{
}$A$-\emph{connected at infinity} if $Q$ contains arbitrarily small
neighborhoods of infinity $V$ for which $A\cup V$ is connected.

\begin{example}
If $P$ is a compact manifold with connected boundary, $X\subseteq\partial P$
is a closed set, and $Q=P\backslash X$, then $Q$ has one end for each
component of $X$ but $Q$ is $\partial Q$-connected at infinity. More
generally, if $B$ is a clean connected codimension $0$ manifold neighborhood
of $X$ in $\partial P$ and $A=B\backslash X$, then $Q$ is $A$-connected at infinity.
\end{example}

The following lemma is straightforward.

\begin{lemma}
\label{Lemma: relA connected versus Q-A 1-ended}Let $Q$ be a noncompact
manifold and $A$ a clean codimension $0$ submanifold of $\partial Q$. Then $Q$
is $A$-connected at infinity if and only if $Q\backslash A$ is $1$-ended.
\end{lemma}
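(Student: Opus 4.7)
The plan is to prove both implications by using a bicollar of $\partial Q$ in $Q$ to translate between the connectivity of $A\cup V$ in $Q$ and the component structure of $(Q\setminus A)\setminus K$. The key observation on both sides is that if $K\subseteq Q\setminus A$ is compact then $K$ and $A$ are disjoint closed subsets of the locally compact space $Q$ with positive separation, so paths lying in $A\cup V$ can be pushed slightly off $A$ along a collar without leaving $Q\setminus A\setminus K$.

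For the forward direction, suppose $Q$ is $A$-connected at infinity and let $K\subseteq Q\setminus A$ be compact. Enclose $K$ in a compact set $D\subseteq Q\setminus A$ with $K\subseteq\operatorname{Int}_{Q}D$, then apply the hypothesis to obtain a clean neighborhood of infinity $V$ with $V\cap D=\emptyset$ and $A\cup V$ connected. For any $p,q\in V\setminus A$, path-connectedness produces a path $\gamma\subseteq A\cup V$ from $p$ to $q$; using a collar $c\colon\partial Q\times[0,1)\to Q$, I would replace $\gamma$ with a path $\gamma'$ in $Q\setminus A$ that agrees with $\gamma$ off a neighborhood of $A$ and pushes the portion of $\gamma$ lying in $A$ to $c(\gamma(t),\epsilon)$ for small $\epsilon>0$. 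Because $A\cup V$ is closed and disjoint from $K$, a small enough push keeps $\gamma'$ away from $K$, so $V\setminus A$ lies in a single component $E_{0}$ of $(Q\setminus A)\setminus K$. Any other non-relatively-compact component $E$ either escapes to infinity in $Q$, forcing $E\cap V\neq\emptyset$ and hence $E=E_0$, or has a limit point $a\in A$; in the latter case a small collar neighborhood $U_{a}$ disjoint from $K$ gives a connected set $U_{a}\setminus A\subseteq E$, and a path in $A\cup V$ from $a$ to a point of $V\setminus A$, pushed off $A$, produces a path in $(Q\setminus A)\setminus K$ joining $E$ to $E_{0}$. This forces $E=E_0$, so $Q\setminus A$ is $1$-ended.

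For the backward direction, assume $Q\setminus A$ is $1$-ended. Note first that no component of $A$ can be compact, as such a component would contribute an end to $Q\setminus A$ disjoint from the ends of $Q$. Given a compact $C\subseteq Q$, use $1$-endedness to pick a compact $K\subseteq Q\setminus A$ with $K\supseteq C\cap\{d(\cdot,A)\geq\epsilon\}$ for a small $\epsilon>0$ and such that $(Q\setminus A)\setminus K$ has a unique non-relatively-compact component $W$. Let $\widetilde{K}=K\cup(W\cap C)$ (compact, since $W\cap C$ lies in the $\epsilon$-collar of $A$) and let $\widetilde{W}$ be the unique non-relatively-compact component of $(Q\setminus A)\setminus\widetilde{K}$, so $\widetilde{W}\cap C=\emptyset$. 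Define $V:=\widetilde{W}\cup(A\setminus C)$; after (if necessary) enlarging $\widetilde{K}$ to absorb the bounded components of $(Q\setminus A)\setminus\widetilde{K}$ that meet $C$, its complement $Q\setminus V$ is compact. To see $A\cup V=A\cup\widetilde{W}$ is connected, I would show every $a\in A\setminus C$ is a limit of $\widetilde{W}$: a small collar neighborhood $U_{a}$ disjoint from $C\cup\widetilde{K}$ gives a connected non-relatively-compact subset $U_{a}\setminus A\subseteq(Q\setminus A)\setminus\widetilde{K}$, which must lie in $\widetilde{W}$ by uniqueness. Each component $A_{j}$ of $A$ is noncompact and so meets $A\setminus C$, and therefore shares a limit point with $\widetilde{W}$, making $A\cup\widetilde{W}$ connected.

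The main obstacle is bookkeeping in the backward direction: ensuring that $V$ is a genuine neighborhood of infinity requires handling the possibly many bounded components of $(Q\setminus A)\setminus\widetilde{K}$ that meet the compact set $C$. I expect this to be resolved by successively enlarging $\widetilde{K}$ to swallow such components (of which only finitely many can meaningfully intersect $C$), and by noting that for the purposes of the \emph{topological} definition of ``neighborhood of infinity'' we do not need $V$ to be a clean codimension $0$ submanifold---any clean thickening still satisfies $A\cup V$ connected. The forward direction's push-off argument is routine once one fixes a collar compatible with the clean neighborhood of infinity $V$.
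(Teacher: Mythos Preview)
The paper does not actually supply a proof of this lemma; it is stated as ``straightforward'' and left to the reader. So there is no line-by-line comparison to make, only an assessment of whether your argument is sound and whether it matches the spirit of what the authors evidently have in mind.

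Your approach is essentially correct. The forward direction is fine as written: the collar push-off of a path in $A\cup V$ into $(Q\setminus A)\setminus K$ is exactly the right idea, and your two-case analysis of a putative second unbounded component $E$ is valid. In the backward direction your outline is also correct, including the observation that $1$-endedness of $Q\setminus A$ forces every component of $A$ to be noncompact (a compact component $A_0$ has a compact regular neighborhood $U$ in $Q$ meeting $A$ only in $A_0$, and $U\setminus A_0$ is a connected, non-relatively-compact component of $(Q\setminus A)\setminus\operatorname{Fr}_Q U$ distinct from the end at infinity). The one genuine soft spot is the assertion that $\widetilde{K}=K\cup(W\cap C)$ is compact: $W$ is open in $Q\setminus A$, so $W\cap C$ need not be closed, and its closure in $Q\setminus A$ can accumulate on $A$. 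This is repairable---for instance by replacing the ad hoc $\widetilde{K}$ with a compact clean codimension-$0$ submanifold of $Q\setminus A$ large enough that the unique unbounded complementary component misses $C$---but it is not quite as automatic as you suggest.

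That said, your argument is more laborious than what the authors intend. The proof of the very next lemma in the paper (Lemma~\ref{Lemma: relA pro-pi1 versus Q-A pro-pi1}) reveals the cleaner mechanism: choose relative regular neighborhoods $C_i$ of $A$ in $Q$ with $\bigcap C_i=A$, and observe that for any clean neighborhood of infinity $V$ in $Q$ the set $N=(C_i\cup V)\setminus A$ is a clean neighborhood of infinity in $Q\setminus A$ with $N\hookrightarrow C_i\cup V$ a homotopy equivalence and $C_i\cup V$ deformation retracting onto $A\cup V$. Thus $\pi_0(N)\cong\pi_0(A\cup V)$, and as $i$ and $V$ vary these $N$ form a cofinal system in $Q\setminus A$. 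Both directions of the lemma then follow immediately from this dictionary, with no need for the explicit path-pushing or the component bookkeeping you describe. Your collar argument is really a by-hand version of this same homotopy equivalence; packaging it via regular neighborhoods avoids the compactness issue with $\widetilde{K}$ entirely.
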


If $A\subseteq\partial Q$ and $Q$ is $A$-connected at infinity: let $\left\{
V_{i}\right\}  $ be a cofinal sequence of clean neighborhoods of infinity for
which each $A\cup V_{i}$ is connected; choose a ray $r:[1,\infty
)\rightarrow\operatorname*{int}Q$ such that $r\left(  [i,\infty)\right)
\subseteq V_{i}$ for each $i>0$; and form the inverse sequence%
\begin{equation}
\pi_{1}\left(  A\cup V_{1},r\left(  1\right)  \right)  \overset{\mu
_{2}}{\longleftarrow}\pi_{1}\left(  A\cup V_{2},r\left(  2\right)  \right)
\overset{\mu_{3}}{\longleftarrow}\pi_{1}\left(  A\cup V_{3},r\left(  3\right)
\right)  \overset{\mu_{4}}{\longleftarrow}\cdots
\label{sequence: rel A pro-pi1}%
\end{equation}
where bonding homomorphisms are obtained as in (\ref{sequence: pro-pi1}). We
say $Q$ \emph{is }$A$-$\pi_{1}$\emph{-stable at infinity} if
(\ref{sequence: rel A pro-pi1}) is stable. Independence of this property from
the choices of $\left\{  V_{i}\right\}  $ and $r$ follows from the traditional
theory of ends by applying Lemmas
\ref{Lemma: relA connected versus Q-A 1-ended} and
\ref{Lemma: relA pro-pi1 versus Q-A pro-pi1}.

\begin{lemma}
\label{Lemma: relA pro-pi1 versus Q-A pro-pi1}Let $Q$ be a noncompact manifold
and $A$ a clean codimension $0$ submanifold of $\partial Q$ for which $Q$ is
$A$-connected at infinity. Then, for any cofinal sequence of clean
neighborhoods of infinity $\left\{  V_{i}\right\}  $ and ray $r:[1,\infty
)\rightarrow Q$ as described above, the sequence
(\ref{sequence: rel A pro-pi1}) is pro-isomorphic to any sequence representing
$\operatorname*{pro}$-$\pi_{1}\left(  Q\backslash A,r\right)  $.
\end{lemma}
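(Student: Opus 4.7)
The plan is to bridge the sequence (\ref{sequence: rel A pro-pi1}) and a representative of $\operatorname*{pro}$-$\pi_{1}(Q\backslash A,r)$ via an intermediate ``collar thickening'' of each $A\cup V_{i}$ inside $Q$. Since $A$ is a clean codimension $0$ submanifold of $\partial Q$, fix a collar $c:A\times[0,1]\to Q$ of $A$ in $Q$. After passing to a subsequence and slightly enlarging each $V_{i}$ along $c((V_{i}\cap A)\times[0,\varepsilon])$, I would arrange that the collar is compatible with each $V_{i}$, i.e., $V_{i}\cap c(A\times[0,1/i])=c((V_{i}\cap A)\times[0,1/i])$. Set
\[
\widetilde{V}_{i}=V_{i}\cup c(A\times[0,1/i]),\qquad W_{i}=\widetilde{V}_{i}\backslash A.
\]

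Next, I would establish the two facts on which the argument rests. First, the inclusions $A\cup V_{i}\hookrightarrow\widetilde{V}_{i}$ and $W_{i}\hookrightarrow\widetilde{V}_{i}$ are both homotopy equivalences: the former by the collar retraction pushing $c(A\times[0,1/i])$ down onto $A$ (this uses the compatibility arrangement), the latter by the standard boundary-removal retraction that pushes $A$ inward to $c(A\times\{1/(2i)\})$. In particular each $W_{i}$ is connected because $A\cup V_{i}$ is. Second, $\{W_{i}\}$ is a cofinal sequence of clean, connected neighborhoods of infinity in $Q\backslash A$. Compactness of $(Q\backslash A)\backslash W_{i}=Q\backslash\widetilde{V}_{i}$ is immediate since this sits inside the compact set $Q\backslash V_{i}$. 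Cofinality follows because any compact $K\subseteq Q\backslash A$ is closed in $Q$ and disjoint from $A$, hence, by the standard nested-closed-sets argument, $K$ eventually avoids both $V_{i}$ and the shrinking collar $c(A\times[0,1/i])$. Cleanness requires PL smoothing at the corner where $c(\operatorname*{Fr}_{\partial Q}A\times\{1/i\})$ meets $\operatorname*{Fr}_{Q}V_{i}$.

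Finally, by \lemref{Lemma: relA connected versus Q-A 1-ended} the manifold $Q\backslash A$ is $1$-ended, so the inverse sequence $\{\pi_{1}(W_{i},r(i))\}$ with inclusion-induced bonds (note $r([i,\infty))\subseteq V_{i}\cap\operatorname*{int}Q\subseteq W_{i}$) represents $\operatorname*{pro}$-$\pi_{1}(Q\backslash A,r)$. Since $\widetilde{V}_{i}\supseteq\widetilde{V}_{i+1}$, $A\cup V_{i}\supseteq A\cup V_{i+1}$, and $W_{i}\supseteq W_{i+1}$, the horizontal isomorphisms
\[
\pi_{1}(A\cup V_{i},r(i))\xrightarrow{\cong}\pi_{1}(\widetilde{V}_{i},r(i))\xleftarrow{\cong}\pi_{1}(W_{i},r(i))
\]
are natural in $i$, yielding a commutative ladder which exhibits $\{\pi_{1}(A\cup V_{i},r(i))\}$ and $\{\pi_{1}(W_{i},r(i))\}$ as pro-isomorphic (via the intermediate sequence $\{\pi_{1}(\widetilde{V}_{i},r(i))\}$). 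The conclusion follows since any two representatives of $\operatorname*{pro}$-$\pi_{1}(Q\backslash A,r)$ are pro-isomorphic.

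The main obstacle I anticipate is the compatibility arrangement at the outset: ensuring that the collar respects the filtration by the $V_{i}$'s so that the downward collar retraction genuinely maps $\widetilde{V}_{i}$ into $A\cup V_{i}$ (rather than into a slight enlargement), and handling PL-cleanness of $W_{i}$ at the corner where $c(A\times\{1/i\})$ meets $\operatorname*{Fr}_{Q}V_{i}$. These are standard PL manipulations, but the bookkeeping is nontrivial because both $A$ and the pieces $V_{i}\cap\partial Q$ are in general noncompact.
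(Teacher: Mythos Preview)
Your proposal is correct and follows essentially the same approach as the paper: both introduce an intermediate thickening $\widetilde{V}_{i}$ of $A\cup V_{i}$ (the paper writes it as $C_{i}\cup V_{i}$, using nested relative regular neighborhoods $C_{i}$ of $A$ in place of your collar slices $c(A\times[0,1/i])$), observe that both $A\cup V_{i}\hookrightarrow\widetilde{V}_{i}$ and $\widetilde{V}_{i}\backslash A\hookrightarrow\widetilde{V}_{i}$ are homotopy equivalences, and assemble the resulting isomorphisms into a commuting ladder. The only cosmetic difference is that the paper chooses each $C_{i}$ adapted to the given $V_{i}$ (avoiding your preliminary subsequence-and-enlargement step), whereas you fix one collar and adjust the $V_{i}$ to it; both handle the same compatibility issue you flag at the end.
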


\begin{proof}
It suffices to find a single cofinal sequence of connected neighborhoods of
infinity $\left\{  N_{i}\right\}  $ in $Q\backslash A$ for which the
corresponding representation of $\operatorname*{pro}$-$\pi_{1}\left(
Q\backslash A,r\right)  $ is pro-isomorpic to (\ref{sequence: rel A pro-pi1}).
Toward that end, for each $i$ let $C_{1}\supseteq C_{2}\supseteq\cdots$ be a
nested sequence of relative regular neighborhoods of $A$ in $Q$ such that
$\cap C_{i}=A$. By \textquotedblleft cleanness\textquotedblright\ of the
$V_{i}$, each $C_{i}$ can be chosen so that $C_{i}\cup V_{i}$ is a clean
codimension $0$ submanifold of $Q$ which deformation retracts onto $A\cup
V_{i}$. Then $N_{i}=\left(  C_{i}\cup V_{i}\right)  \backslash A$ is a clean
neighborhood of infinity in $Q\backslash A$ and $N_{i}\hookrightarrow
C_{i}\cup V_{i}$ is a homotopy equivalence. For each $i$ there is a canonical
isomorphism $\alpha_{i}:\pi_{1}\left(  A\cup V_{i},r\left(  i\right)  \right)
\rightarrow\pi_{1}\left(  N_{i},r\left(  i\right)  \right)  $ which is the
composition%
\[
\pi_{1}\left(  A\cup V_{i},r\left(  i\right)  \right)  \overset{\cong%
}{\longrightarrow}\pi_{1}\left(  C_{i}\cup V_{i},r\left(  i\right)  \right)
\overset{\cong}{\longleftarrow}\pi_{1}\left(  N_{i},r\left(  i\right)
\right)
\]
These isomorphisms fit into a commuting diagram%
\[%
\begin{array}
[c]{ccccccc}%
\pi_{1}\left(  A\cup V_{1},r\left(  1\right)  \right)  & \overset{\mu
_{2}}{\longleftarrow} & \pi_{1}\left(  A\cup V_{2},r\left(  2\right)  \right)
& \overset{\mu_{3}}{\longleftarrow} & \pi_{1}\left(  A\cup V_{3},r\left(
3\right)  \right)  & \overset{\mu_{4}}{\longleftarrow} & \cdots\\
\alpha_{1}\downarrow\cong &  & \alpha_{2}\downarrow\cong &  & \alpha
_{3}\downarrow\cong &  & \\
\pi_{1}\left(  N_{1},r\left(  1\right)  \right)  & \overset{\lambda
_{2}}{\longleftarrow} & \pi_{1}\left(  N_{2},r\left(  2\right)  \right)  &
\overset{\lambda_{3}}{\longleftarrow} & \pi_{1}\left(  N_{3},r\left(
3\right)  \right)  & \overset{\lambda_{4}}{\longleftarrow} & \cdots
\end{array}
\]
completing the proof.
\end{proof}

\begin{remark}
In the above discussion, we allow for the possibility that $A=\varnothing$. In
that case, $A$-connectedness at infinity reduces to $1$-endedness and $A$%
-$\pi_{1}$-stability to ordinary $\pi_{1}$-stability at that end.
\end{remark}

\begin{definition}
\label{Defn: peripheral local connectedness}Let $M^{m}$ be a manifold and
$\varepsilon$ an end of $M^{m}$.

\begin{enumerate}
\item \label{Condition 1: Defn of periperally locally connected}$M^{m}$ is
\emph{peripherally locally connected at infinity} if it contains arbitrarily
small \smallskip$0$-neigh\-bor\-hoods of infinity $N$ with the property that
each component $N^{j}$ is $\partial_{M}N^{j}$-connected at infinity.

\item \label{Condition 2: Defn of periperally locally connected}$M^{m}$ is
\emph{peripherally locally connected at }$\varepsilon$ if $\varepsilon$ has
arbitrarily small $0$-neigh\-bor\-hoods $P$ that are $\partial_{M}P$-connected
at infinity.
\end{enumerate}

\noindent An $N$ with the property described in condition
(\ref{Condition 1: Defn of periperally locally connected}) will be called a
\emph{strong }$0$\emph{-neigh\-bor\-hood of infinity }for $M^{m}$, and a $P$
with the property described in condition
(\ref{Condition 2: Defn of periperally locally connected}) will be called a
\emph{strong }\smallskip$0$\emph{-neigh\-bor\-hood of }$\varepsilon$. More
generally, any connected partial \smallskip$0$-neigh\-bor\-hood of infinity
$Q$ that is $\partial_{M}Q$-connected at infinity will be called a
\emph{strong partial }\smallskip$0$\emph{-neigh\-bor\-hood of infinity.}
\end{definition}

\begin{lemma}
\label{Lemma: Equivalent notions of periph. local connectedness}$M^{m}$ is
peripherally locally connected at infinity iff $M^{m}$ is peripherally locally
connected at each of its ends.
\end{lemma}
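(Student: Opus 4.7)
The plan is to prove the two directions independently. For the forward direction, given an end $\varepsilon$ and any neighborhood $U$ of $\varepsilon$, I will first pick a $0$-neighborhood of infinity $V$ along a cofinal sequence for $\varepsilon$ deep enough that the component $V^{j_0}$ representing $\varepsilon$ is contained in $U$. Applying peripheral local connectedness at infinity, I can then find a strong $0$-neighborhood of infinity $N\subseteq V$, and its component $N^{l_0}$ representing $\varepsilon$---contained in $V^{j_0}\subseteq U$---is a strong partial $0$-neighborhood of infinity that is also a neighborhood of $\varepsilon$; it thus qualifies as a strong $0$-neighborhood of $\varepsilon$ sitting inside $U$.

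For the backward direction, given a $0$-neighborhood $V$ of infinity with components $V^1,\ldots,V^r$, the goal is to construct a strong $0$-neighborhood $N\subseteq V$. My plan is to treat each $V^j$ separately, producing a finite disjoint union $N^j\subseteq V^j$ of strong partial $0$-neighborhoods with $\overline{V^j\setminus N^j}$ compact, and then set $N=\bigsqcup_j N^j$. Fixing $j$, note that the intrinsic ends of $V^j$ coincide with the ends of $M^m$ having representatives in $V^j$. For each such $\varepsilon$, the hypothesis yields a strong $0$-neighborhood $P_\varepsilon$ of $\varepsilon$ which, by shrinking, I arrange to lie in $V^j$ and to miss $\operatorname{Fr}_M V^j$; in that regime $\partial_{V^j}P_\varepsilon=\partial_M P_\varepsilon$, so $\operatorname{int}_M P_\varepsilon$ is $1$-ended and $P_\varepsilon$ represents a single end of the open manifold $\operatorname{int}_M V^j$. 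Choosing one such $P_\varepsilon$ per end of $\operatorname{int}_M V^j$, made pairwise disjoint by further shrinking, produces the components of $N^j$.

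The main obstacle is showing that $\operatorname{int}_M V^j$ has only finitely many ends, which is necessary because $N^j$, being part of a $0$-neighborhood of infinity, can have only finitely many components. This finiteness is not immediate from the per-end hypothesis; I expect to extract it via a compactness argument combining compactness of $\operatorname{Fr}_M V^j$ with the $1$-ended interior property of each $P_\varepsilon$, essentially ruling out an infinite family of pairwise disjoint strong $0$-neighborhoods with locally flat bicollared compact frontiers representing distinct ends of $\operatorname{int}_M V^j$ inside a single partial $0$-neighborhood of $M^m$. Once this finiteness is in hand, pairwise disjointness of the chosen $P_{\varepsilon_i}$ is routine, and compactness of $\overline{V^j\setminus\bigsqcup_i P_{\varepsilon_i}}$ follows because every end of $V^j$ is then captured; this completes the construction of $N^j$ and hence of $N$.
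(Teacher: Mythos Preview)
Your forward direction is correct and matches the paper's one-line observation.

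For the converse, your approach diverges from the paper's and contains a real gap. You correctly flag the finiteness of the ends of $\operatorname{int}_M V^j$ as the main obstacle, but your sketched argument---appealing to compactness of $\operatorname{Fr}_M V^j$---does not establish it; compactness of the frontier does not by itself bound the number of ends of $\operatorname{int}_M V^j$, and since this lemma does \emph{not} assume inward tameness, Proposition~\ref{Proposition: finitely many ends} is unavailable. The more serious issue is your insistence on pairwise disjoint $P_\varepsilon$'s obtained by ``further shrinking.'' When the end space of $V^j$ is, say, a Cantor set (as in the $B^m\setminus C$ example from Remark~\ref{Remark: Comments on Completion Theorem}), no end is isolated, so shrinking a $P_\varepsilon$ to avoid another forces it to miss ends it previously covered; one cannot in general produce a finite \emph{disjoint} family that still covers a neighborhood of infinity in $V^j$. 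There is also some slippage between ends of $V^j$ (which index the $P_\varepsilon$'s supplied by the hypothesis) and ends of $\operatorname{int}_M V^j$ (which you want to enumerate).

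The paper avoids all of this. It invokes compactness of the Freudenthal end space of $M^m$ to extract a \emph{finite} but possibly overlapping subcollection $\{P_{\varepsilon_1},\ldots,P_{\varepsilon_n}\}$ whose union is a neighborhood of infinity. Instead of forcing disjointness, it places the $P_{\varepsilon_k}$ in general position and takes the components $Q_j$ of $\bigcup_k P_{\varepsilon_k}$, noting that each $Q_j$ is again $\partial_M Q_j$-connected at infinity. This sidesteps any need to count the ends of $\operatorname{int}_M V^j$ or to separate the $P_\varepsilon$'s.
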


\begin{proof}
Clearly the initial condition implies the latter. For the converse, let
$N^{\prime}$ be an arbitrary neighborhood of infinity in $M^{m}$ and for each
end $\varepsilon$, let $P_{\varepsilon}$ be a \smallskip$0$-neigh\-bor\-hoods
of $\varepsilon$, contained in $N^{\prime}$, which is $\partial_{M}%
P_{\varepsilon}$-connected at infinity. By compactness of the Freudenthal
boundary of $M^{m}$, there is a finite subcollection $\left\{  P_{\varepsilon
_{k}}\right\}  _{k=1}^{n}$ that covers the end of $M^{m}$; in other words,
$C=\overline{M^{m}-\cup_{k=1}^{n}P_{\varepsilon_{k}}}$ is compact. If the
$P_{\varepsilon_{k}}$ are pairwise disjoint, we are finished; just let
$N=\cup_{k=1}^{n}P_{\varepsilon_{k}}$. If not, adjust the $P_{\varepsilon_{k}%
}$ within $N^{\prime}$ so they are in general position with respect to one
another, then let $\left\{  Q_{j}\right\}  _{j=1}^{s}$ be the set of
components of $\cup_{k=1}^{n}P_{\varepsilon_{k}}$ and note that each $Q_{j}$
is a $\partial_{M}Q_{j}$-connected partial \smallskip$0$-neigh\-bor\-hood of infinity.
\end{proof}

\begin{remark}
In the next section, we show that \emph{every} inward tame manifold $M^{m}$ is
peripherally locally connected at infinity. As a consequence, that condition
plays less prominent role than the next definition.
\end{remark}

\begin{definition}
\label{Defn: peripheral pi1-stability}Let $M^{m}$ be a manifold and
$\varepsilon$ an end of $M^{m}$.

\begin{enumerate}
\item $M^{m}$ is \emph{peripherally }$\pi_{1}$\emph{-stable at infinity} if
contains arbitrarily small strong \smallskip$0$-neigh\-bor\-hoods of infinity
$N$ with the property that each component $N^{j}$ is $\partial_{M}N^{j}$%
-$\pi_{1}$-stable at infinity.

\item $M^{m}$ is \emph{peripherally }$\pi_{1}$\emph{-stable at }$\varepsilon$
if $\varepsilon$ has arbitrarily small strong \smallskip$0$-neigh\-bor\-hoods
$P$ that are $\partial_{M}P$-$\pi_{1}$-stable at infinity.\medskip
\end{enumerate}
\end{definition}

It is easy to see that peripheral $\pi_{1}$-stability at infinity implies
peripheral $\pi_{1}$-stability at each end; and when $M^{m}$ is finite-ended,
peripheral $\pi_{1}$-stability at each end implies peripheral $\pi_{1}%
$-stability at infinity. A argument could be made for defining peripheral
$\pi_{1}$-stability at infinity to mean \textquotedblleft peripherally
$\pi_{1}$-stability at each end\textquotedblright. For us, that point is moot;
in the presence of inward tameness the two alternatives are equivalent.

\begin{lemma}
\label{Lemma: equivalence of peripheral pi1-stability conditions}An inward
tame manifold $M^{m}$ is peripherally $\pi_{1}$-stable at infinity if and only
if it is peripherally $\pi_{1}$-stable at each of its ends.
\end{lemma}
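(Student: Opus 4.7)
The forward direction is immediate: if $\{N_i\}$ is a cofinal family of strong $0$-neighborhoods of infinity in which every component is $\partial_M$-$\pi_1$-stable at infinity, then for any end $\varepsilon=(N_i^{k_i})_{i=1}^{\infty}$ the sequence $\{N_i^{k_i}\}$ furnishes a cofinal family of strong $0$-neighborhoods of $\varepsilon$ of the required type. Inward tameness is not needed here.

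For the reverse direction, assume $M^m$ is inward tame and peripherally $\pi_1$-stable at each of its ends. The plan is to mimic the proof of Lemma \ref{Lemma: Equivalent notions of periph. local connectedness}, enhanced to preserve $\pi_1$-stability. Given any preassigned neighborhood $N'$ of infinity, inward tameness already supplies peripheral local connectedness at infinity (via the Remark preceding Definition \ref{Defn: peripheral pi1-stability}). For each end $\varepsilon$ of $M^m$, first choose a strong $0$-neighborhood $P_\varepsilon \subseteq N'$ of $\varepsilon$ that is $\partial_M P_\varepsilon$-$\pi_1$-stable at infinity. Because the Freudenthal end-space of $M^m$ is compact, finitely many $P_{\varepsilon_1},\ldots,P_{\varepsilon_n}$ exhaust every end, in the sense that $\overline{M^m\setminus\bigcup_k P_{\varepsilon_k}}$ is compact. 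Put them in general position inside $N'$, let $Q_1,\ldots,Q_s$ be the components of $\bigcup_k P_{\varepsilon_k}$, and set $N=\bigcup_j Q_j$; by the argument in the proof of Lemma \ref{Lemma: Equivalent notions of periph. local connectedness}, $N$ is a strong $0$-neighborhood of infinity contained in $N'$ with each $Q_j$ a strong partial $0$-neighborhood of infinity.

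The main obstacle is now to verify that each $Q_j$ is $\partial_M Q_j$-$\pi_1$-stable at infinity. If the $P_{\varepsilon_k}$ happen to be pairwise disjoint this is automatic, since each $Q_j$ is then some $P_{\varepsilon_k}$; in general one needs two inheritance results. First, $\partial_M$-$\pi_1$-stability at infinity should descend to a clean partial $0$-sub-neighborhood that agrees with the original outside a compact set. Second, it should be preserved under a clean union of finitely many pairwise-overlapping strong partial $0$-neighborhoods. For both, my plan is to compare the relevant inverse sequences of fundamental groups on a common cofinal family of small neighborhoods of infinity living deep inside the intersection (respectively, inside one of the summands), applying Lemma \ref{Lemma: relA pro-pi1 versus Q-A pro-pi1} to convert the peripheral sequences into ordinary $\operatorname{pro}$-$\pi_1$ sequences of the complements. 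Inward tameness enters to supply Mittag-Leffler data on those complements, and in the union case to control a van Kampen--style amalgamation across the common frontier. Because these verifications are technical and orthogonal to the main line of argument, they would be carried out in the later section devoted to peripheral $\pi_1$-stability conditions, in accordance with the authors' outline.
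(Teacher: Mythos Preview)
Your setup---the forward direction, the finite cover by $P_{\varepsilon_1},\ldots,P_{\varepsilon_n}$ via compactness of the end space, general position, and passage to components $Q_j$---matches the paper exactly. But the content of the lemma lies entirely in the step you defer: showing each $Q_j$ is $\partial_M Q_j$-$\pi_1$-stable. Calling this ``technical and orthogonal to the main line'' is backwards; it \emph{is} the main line, and your proposed tools do not obviously suffice. A van Kampen--style amalgamation does not in general propagate stability of $\operatorname{pro}$-$\pi_1$ from pieces to a union: one would need control over the intersections $P_{\varepsilon_i}\cap P_{\varepsilon_j}$, and nothing in the hypotheses gives those intersections stable peripheral $\operatorname{pro}$-$\pi_1$. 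Mittag--Leffler data alone address semistability, not the injectivity half.

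The paper's argument is organized differently and contains the idea you are missing. Rather than tracking stability through unions, it proves the stronger Proposition: under the hypotheses, \emph{every} strong partial $0$-neighborhood $Q$ is $\partial_M Q$-$\pi_1$-stable (so in particular each $Q_j$ is). The key device is to split ``stable'' into ``semistable'' plus ``pro-monomorphic.'' Semistability of $\operatorname{pro}$-$\pi_1(Q\backslash\partial M^m)$ comes for free from inward tameness (via \cite{GuTi03}), so only the pro-monomorphic half must be verified. That half has a purely topological characterization---the existence of a $\pi_1$-core---and the paper exhibits such a core by covering $Q$ with the $P_{\varepsilon_i}$ and carrying out a cell-by-cell pushing argument on a fine subdivision of a contracting square, using the $\pi_1$-cores of the individual $P_{\varepsilon_i}\backslash\partial M^m$ and the semistability of the (finitely many) components of their multiple intersections. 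This decomposition is what makes the argument go through; your outline should incorporate it rather than hope for a direct van Kampen route.
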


Proof of this lemma is technical, and not central to the main argument. For
that reason, we save the proof for later (see
\S \ref{Section: Equivalence of various peripheral pi1-stability conditions}).
Although it is not needed here, it would be interesting to know whether Lemma
\ref{Lemma: equivalence of peripheral pi1-stability conditions} holds without
the assumption of inward tameness.

\section{Finite domination and inward
tameness\label{Section: finite domination and inward tameness}}

A topological space $P$ is \emph{finitely dominated} if there exists a finite
polyhedron $K$ and maps $u:P\rightarrow K$ and $d:K\rightarrow P$ such that
$d\circ u\simeq\operatorname*{id}_{P}$. If choices can be made so both $d\circ
u\simeq\operatorname*{id}_{P}$ and $u\circ d\simeq\operatorname*{id}_{K}$,
i.e., $P\simeq K$, we say $P$ \emph{has finite homotopy type}. For simplicity,
we will restrict our attention to cases where $P$ is a locally finite
polyhedron---a class that contains the (PL) manifolds, submanifolds, and
subspaces considered here.

\begin{lemma}
\label{Lemma 4.1} Let $M^{m}$ be a manifold and $A\subseteq\partial M$. Then
$M^{m}$ is finitely dominated [resp., has finite homotopy type] if and only if
$M^{m}\backslash A$ is finitely dominated [resp., has finite homotopy type].
\end{lemma}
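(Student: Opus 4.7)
My plan is to reduce the lemma to the claim that the inclusion $j \colon M^{m}\setminus A \hookrightarrow M^{m}$ is a homotopy equivalence. Both finite domination and finite homotopy type are invariants of homotopy type, so a homotopy equivalence $j$ immediately transports the relevant data (maps $u \colon P \to K$ and $d \colon K \to P$ with $d\circ u \simeq \operatorname{id}_{P}$, and an analogous triple in the case of finite homotopy type) between the two sides.

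To produce a homotopy inverse to $j$, I would invoke the PL collar theorem: choose a collar $c \colon \partial M^{m}\times[0,1]\hookrightarrow M^{m}$ with $c(y,0)=y$, and fix a continuous $\phi\colon[0,1]\to[0,1]$ with $\phi(0)>0$ and $\phi(s)=s$ for $s$ near $1$. Define $r \colon M^{m}\to M^{m}$ to be the identity outside the collar image and to send $c(y,s)\mapsto c(y,\phi(s))$ on the collar; agreement on the frontier $c(\partial M^{m}\times\{1\})$ gives continuity. Since $\phi$ is strictly positive, $r(M^{m})\subseteq \operatorname{int} M^{m}\subseteq M^{m}\setminus A$, so $r$ factors as $r = j\circ r'$ for a map $r' \colon M^{m}\to M^{m}\setminus A$.

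The straight-line homotopy $H_{t}(c(y,s))=c(y,(1-t)s+t\phi(s))$ on the collar (and $H_{t}=\operatorname{id}$ elsewhere) realizes $j\circ r'\simeq \operatorname{id}_{M^{m}}$. The crucial verification is that $H_{t}$ preserves $M^{m}\setminus A$, which then yields $r'\circ j \simeq \operatorname{id}_{M^{m}\setminus A}$ by restriction. For $c(y,s)\in M^{m}\setminus A$, either $s>0$, in which case $(1-t)s+t\phi(s)>0$ and $H_{t}(c(y,s))$ stays off $\partial M^{m}$; or $s=0$ and $y\in\partial M^{m}\setminus A$, in which case $H_{t}(y)=c(y,t\phi(0))$ lies on the $\partial M^{m}$-fiber through $y$ and therefore avoids $A$ entirely. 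This track check at boundary points of $M^{m}\setminus A$ is the only delicate ingredient; once it is in hand, homotopy invariance of finite domination and finite homotopy type for locally finite polyhedra closes the argument in both directions.
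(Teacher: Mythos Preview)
Your proposal is correct and follows exactly the paper's approach: the paper's proof is the single line ``$M^{m}\backslash A\hookrightarrow M^{m}$ is a homotopy equivalence, and these properties are homotopy invariants.'' You have simply supplied the collar argument that justifies the asserted homotopy equivalence, which the paper leaves to the reader.
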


\begin{proof}
$M^{m}\backslash A\hookrightarrow M^{m}$ is a homotopy equivalence, and these
properties are homotopy invariants.
\end{proof}

\begin{lemma}
A locally finite polyhedron $P$ is finitely dominated if and only if there
exists a homotopy $H:P\times\left[  0,1\right]  \rightarrow P$ such that
$H_{0}=\operatorname*{id}_{P}$ and $\overline{H_{1}\left(  P\right)  }$ is compact.
\end{lemma}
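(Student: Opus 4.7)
The plan is to prove the two implications separately; both are short, with the reverse direction being the one that uses the ``locally finite polyhedron'' hypothesis in an essential way.

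For the forward direction, I would simply unpack the definition of finite domination. Suppose $P$ is finitely dominated via a finite polyhedron $K$ and maps $u:P\to K$, $d:K\to P$ with $d\circ u\simeq\operatorname*{id}_{P}$. Choose any homotopy $H:P\times[0,1]\to P$ realizing this equivalence with $H_{0}=\operatorname*{id}_{P}$ and $H_{1}=d\circ u$. Then $H_{1}(P)\subseteq d(K)$, and $d(K)$ is compact as the continuous image of the finite polyhedron $K$, so $\overline{H_{1}(P)}$ is compact. No local finiteness is used here.

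For the reverse direction, suppose $H:P\times[0,1]\to P$ has $H_{0}=\operatorname*{id}_{P}$ and $C:=\overline{H_{1}(P)}$ compact. The plan is to engulf $C$ by a finite subpolyhedron $L\subseteq P$ and then read off the finite domination. Concretely, I would fix a triangulation of $P$ (available since $P$ is a locally finite polyhedron), invoke the standard fact that any compact subset of a locally finite simplicial complex meets only finitely many open simplices and hence lies in a finite subcomplex, and let $L$ be such a finite subcomplex containing $C$. Define $u:P\to L$ to be the corestriction of $H_{1}$ (well defined since $H_{1}(P)\subseteq C\subseteq L$, and continuous because $L$ carries the subspace topology) and let $d:L\hookrightarrow P$ be the inclusion. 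Then $d\circ u=H_{1}\simeq H_{0}=\operatorname*{id}_{P}$ via $H$, so $L$ finitely dominates $P$.

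The only nonformal ingredient is the engulfing step in the reverse direction, and that is a textbook property of locally finite simplicial complexes, so I do not anticipate any real obstacle. It is worth noting that local finiteness cannot be dropped: without it, the compact set $\overline{H_{1}(P)}$ need not sit inside any finite subpolyhedron, which is exactly where the argument would break down.
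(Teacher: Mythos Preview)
Your proof is correct and follows essentially the same approach as the paper: the forward direction uses the homotopy witnessing $d\circ u\simeq\operatorname*{id}_{P}$, and the reverse direction absorbs $\overline{H_{1}(P)}$ into a compact subpolyhedron of $P$ (the paper uses a compact polyhedral neighborhood rather than a finite subcomplex, but this is a cosmetic difference), with $H_{1}$ and the inclusion serving as the domination maps.
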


\begin{proof}
Assuming a finite domination, as described above, the homotopy between
$\operatorname*{id}_{P}$ and $d\circ u$ has the desired property. For the
converse, let $K$ be a compact polyhedral neighborhood of $\overline
{H_{1}\left(  P\right)  }$, $u:K\hookrightarrow P$, and $d=H_{1}:P\rightarrow
K$.
\end{proof}

A locally finite polyhedron $P$ is \emph{inward tame} if it contains
arbitrarily small polyhedral neighborhoods of infinity that are finitely
dominated. Equivalently, $P$ contains a cofinal sequence $\left\{
N_{i}\right\}  $ of closed polyhedral neighborhoods of infinity each admitting
a \textquotedblleft taming homotopy\textquotedblright\ $H:N_{i}\times\left[
0,1\right]  \rightarrow N_{i}$ that pulls $N_{i}$ into a compact subset of
itself. By an application of the Homotopy Extension Property (similar to
\cite[Lemma 3.4]{GuMo18}) we can require taming homotopies to be fixed on
$\operatorname*{Fr}N_{i}$. From there, it is easy to see that, in an inward
tame polyhedron, \emph{every} closed neighborhood of infinity admits a taming
homotopy.\footnote{For a discussion of \textquotedblleft
tameness\textquotedblright\ terminology and its variants, see \cite[\S 3.5.5]%
{Gui16}.}

\begin{lemma}
\label{Lemma: inward tameness of deleted manifolds}Let $M^{m}$ be a manifold
and $A$ a clean codimension $0$ submanifold of $\partial M^{m}$. If $M^{m}$ is
inward tame then so is $M^{m}\backslash A$.
\end{lemma}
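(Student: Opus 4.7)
The plan is to show that for every compact set $K\subseteq M^m\setminus A$, the complement $(M^m\setminus A)\setminus K$ contains a finitely dominated closed neighborhood of infinity of $M^m\setminus A$. The strategy is to build such a neighborhood by attaching a thin collar of $A$ to a finitely dominated neighborhood of infinity supplied by inward tameness of $M^m$, then deleting $A$ and invoking Lemma~\ref{Lemma 4.1}.

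Fix a PL collar $c\colon A\times[0,1]\to M^m$ of $A$ in $M^m$ and write $R_\epsilon=c(A\times[0,\epsilon])$. Since $K$ is compact in $M^m$ and disjoint from the closed set $A$, choose $\epsilon>0$ with $R_\epsilon\cap K=\varnothing$. Using the inward tameness of $M^m$, together with the observation made just before this lemma that every closed neighborhood of infinity in an inward tame polyhedron is finitely dominated, pick a clean, finitely dominated, closed neighborhood of infinity $N$ of $M^m$ with $N\cap K=\varnothing$, arranged by a small PL isotopy (supported near $A$ and away from $K$) to be collar-compatible, i.e. $N\cap c(A\times[0,1])=c((N\cap A)\times[0,1])$. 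Set $Q=N\cup R_\epsilon$ and $V=Q\setminus A$.

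I would then verify two properties. First, $V$ is a closed neighborhood of infinity of $M^m\setminus A$ disjoint from $K$: the complement $M^m\setminus Q=(M^m\setminus N)\cap(M^m\setminus R_\epsilon)$ is an open subset of $M^m\setminus N$, so its closure in $M^m$ is compact, and this closure is disjoint from $A$ since $c(A\times[0,\epsilon))\subseteq R_\epsilon$ is an open neighborhood of $A$ in $M^m$; consequently the closure of $M^m\setminus Q$ computed in $M^m\setminus A$ coincides with its closure in $M^m$ and is compact. Moreover $V\cap K\subseteq(N\cup R_\epsilon)\cap K=\varnothing$. Second, $V$ is finitely dominated: after routine PL corner-smoothing, $Q$ is a clean closed neighborhood of infinity of $M^m$ and is therefore finitely dominated by inward tameness of $M^m$; and since $A\subseteq\partial Q$ is collared in $Q$ by $c|_{A\times[0,\epsilon]}$, Lemma~\ref{Lemma 4.1} gives that $V=Q\setminus A$ is finitely dominated as well. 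Since $K$ was arbitrary, this shows $M^m\setminus A$ contains arbitrarily small finitely dominated closed neighborhoods of infinity, so $M^m\setminus A$ is inward tame.

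The main technical point is the corner-smoothing needed to view $Q$ as a genuine clean submanifold of $M^m$ with $A\subseteq\partial Q$, so that Lemma~\ref{Lemma 4.1} applies verbatim; this step is routine in the PL category. If one prefers to avoid it, the homotopy equivalence $V\simeq Q$ can be established directly by pushing the open half-collar $c(A\times[0,\epsilon/2))$ outward onto $c(A\times\{\epsilon/2\})\subseteq V$, yielding a deformation retraction of $Q$ onto a subset of $V$ and thereby transferring finite domination to $V$ without invoking manifold structure on $Q$.
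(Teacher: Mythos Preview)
Your proposal is correct and follows essentially the same route as the paper's proof: union a clean neighborhood of infinity $N$ in $M^m$ with a collar of $A$, observe the result is finitely dominated, and then delete $A$ to obtain a finitely dominated neighborhood of infinity in $M^m\setminus A$ via the homotopy equivalence of Lemma~\ref{Lemma 4.1}. The only cosmetic difference is that the paper justifies finite domination of $A\cup N$ by extending a taming homotopy of $N$ (fixed on $\operatorname{Fr}N$) over $A$ via the identity, whereas you argue that $Q=N\cup R_\epsilon$ is itself a closed neighborhood of infinity in the inward tame $M^m$ and is therefore automatically finitely dominated; both are valid and amount to the same thing.
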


\begin{proof}
For an arbitrarily small clean neighborhood of infinity $N$ in $M^{m}$, let
$H$ be a taming homotopy that fixes $\operatorname*{Fr}N$. Then $H$ extends
via the identity to a homotopy that pulls $A\cup N$ into a compact subset of
itself, so $A\cup N$ is finitely dominated. Arguing as in Lemma
\ref{Lemma: relA pro-pi1 versus Q-A pro-pi1}, $M^{m}\backslash A$ has
arbitrarily small clean neighborhoods of infinity homotopy equivalent to such
an $A\cup N$.
\end{proof}

\begin{remark}
Important cases of Lemma \ref{Lemma: inward tameness of deleted manifolds} are
when $A=\partial M^{m}$ and when $V$ is a clean neighborhood of infinity (or a
component of one) and $A=\partial_{M}V$. Notice that Lemma
\ref{Lemma: inward tameness of deleted manifolds} is valid when $M^{m}$ is
compact and $H$ is the \textquotedblleft empty map\textquotedblright.
\end{remark}

A finitely dominated space has finitely generated homology, from which it can
be shown that an inward tame manifold with compact boundary is finite-ended
(see \cite[Prop.3.1]{GuTi03}). That conclusion fails for manifolds with
noncompact boundary; see item (\ref{Item on OBrien in Remark 1}) of Remark
\ref{Remark: Comments on Completion Theorem}. The following variation is
crucial to this paper.

\begin{proposition}
\label{Proposition: finitely many ends} If a noncompact connected manifold
$M^{m}$ and its boundary each have finitely generated homology, then $M^{m}$
has finitely many ends. More specifically, the number of ends of $M^{m}$ is
bounded above by $\dim H_{m-1}(M^{m},\partial M^{m};%
\mathbb{Z}
_{2})+1$.
\end{proposition}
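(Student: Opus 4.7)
The plan is to reduce the statement to a linear-algebraic count of independent relative $(m-1)$-cycles built from the frontiers of the non-compact components of a clean neighborhood of infinity. First I would verify that $H_{m-1}(M^m,\partial M^m;\mathbb{Z}_2)$ is finite-dimensional: by universal coefficients, finite generation of $H_{\ast}(M^m)$ and $H_{\ast}(\partial M^m)$ over $\mathbb{Z}$ yields finite-dimensionality over $\mathbb{Z}_2$, and the segment
\[
H_{m-1}(M^m;\mathbb{Z}_2)\longrightarrow H_{m-1}(M^m,\partial M^m;\mathbb{Z}_2)\longrightarrow H_{m-2}(\partial M^m;\mathbb{Z}_2)
\]
of the long exact sequence of the pair squeezes the middle group between two finite-dimensional spaces. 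Set $d := \dim_{\mathbb{Z}_2} H_{m-1}(M^m,\partial M^m;\mathbb{Z}_2)$; it suffices to show that $M^m$ has at most $d+1$ ends.

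Suppose, for contradiction, that $M^m$ has at least $d+2$ ends. Then I would choose a clean compact exhaustion $\{C_i\}$ large enough and enlarge $C_i$ to absorb the (finitely many) compact components of $N_i := \overline{M^m\setminus C_i}$, so that all components $N^1, \ldots, N^{d+2}$ of $N_i$ are non-compact. The frontiers $\Sigma^j := \operatorname{Fr}_{M} N^j$ are compact, properly embedded, locally flat $(m-1)$-submanifolds whose boundaries lie in $\partial M^m$, so each represents a class $[\Sigma^j] \in H_{m-1}(M^m,\partial M^m;\mathbb{Z}_2)$. One relation is immediate: the compact relative $m$-chain $C_i$ has $\partial C_i \equiv \sum_j \Sigma^j \pmod{\partial M^m}$, so $\sum_{j=1}^{d+2} [\Sigma^j] = 0$.

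The heart of the argument is to show that this is the \emph{only} non-trivial $\mathbb{Z}_2$-relation among the $[\Sigma^j]$, for this forces $\dim H_{m-1}(M^m,\partial M^m;\mathbb{Z}_2)\ge (d+2)-1 = d+1$, contradicting the definition of $d$. Given a proper non-empty subset $S\subsetneq\{1,\ldots,d+2\}$ with $\sum_{j\in S}[\Sigma^j]=0$, let $D$ be a compact relative $m$-chain with $\mathbb{Z}_2$-coefficients satisfying $\partial D\equiv\sum_{j\in S}\Sigma^j\pmod{\partial M^m}$, and view $U_S := \bigcup_{j\in S} N^j$ as a locally finite (Borel--Moore) $m$-chain on $M^m$. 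Its algebraic boundary modulo $\partial M^m$ is exactly $\sum_{j\in S}\Sigma^j$, so $D + U_S$ represents a class in $H^{\mathrm{BM}}_m(M^m,\partial M^m;\mathbb{Z}_2)\cong\mathbb{Z}_2$. Since $M^m$ admits no $(m+1)$-chains, the chain $D+U_S$ must equal either $0$ or the whole fundamental chain, so $D = U_S$ or $D = \overline{M^m\setminus U_S}$; both conclusions contradict compactness of $D$, because $U_S$ is non-compact ($S$ is non-empty, and each $N^j$ is non-compact) and $\overline{M^m\setminus U_S}\supseteq\bigcup_{j\notin S} N^j$ is non-compact ($S$ is a proper subset).

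The main obstacle I expect is the Borel--Moore-chain bookkeeping in the presence of a non-compact boundary---justifying that $U_S$ really is a locally finite chain whose algebraic boundary modulo $\partial M^m$ is $\sum_{j\in S}\Sigma^j$, and that $H^{\mathrm{BM}}_m(M^m,\partial M^m;\mathbb{Z}_2)\cong\mathbb{Z}_2$ for every connected non-compact $m$-manifold with boundary. An appealing alternative, should that route prove awkward, is to dualize via Poincar\'e--Lefschetz duality $H_{m-1}(M^m,\partial M^m;\mathbb{Z}_2)\cong H^1_c(M^m;\mathbb{Z}_2)$, after which the hypothesized relation becomes the statement that the indicator cochain $\chi_{U_S}$ agrees outside a compact set with a constant cochain on a connected manifold---impossible, since $\chi_{U_S}$ equals $1$ on the unbounded pieces $N^j$ for $j\in S$ and $0$ on the unbounded pieces $N^j$ for $j\notin S$.
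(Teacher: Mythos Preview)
Your argument is correct, but it follows a different route from the paper. The paper never works directly with the frontier classes $[\Sigma^j]$ in $H_{m-1}(M^m,\partial M^m;\mathbb{Z}_2)$. Instead, it splits $(M^m,\partial M^m)$ as $(C\cup N,\partial_M C\cup\partial_M N)$ and applies relative Mayer--Vietoris, bounding $\dim H_{m-1}(C,\partial_M C)\ge k-1$ via Poincar\'e--Lefschetz duality $H_{m-1}(C,\partial_M C)\cong H_1(C,\operatorname{Fr}C)$ on the compact piece, and $\dim H_{m-1}(N,\partial_M N)\ge k$ via the long exact sequence of the triple $(N,\partial N,\partial_M N)$ together with $H_m(N,\partial N)=0$ (noncompactness of each $N^j$). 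The Mayer--Vietoris inequality then gives $(k-1)+k\le k+\dim H_{m-1}(M^m,\partial M^m)$. Your approach is more geometric and makes the source of the bound transparent---the only relation among the $[\Sigma^j]$ is the one coming from the compact core---at the cost of invoking locally finite chains (or the dual picture in $H^1_c$) to pin down the top-dimensional cycle. The paper's approach stays entirely within ordinary homology and standard exact sequences, which sidesteps exactly the Borel--Moore bookkeeping you flagged as the main obstacle; on the other hand, your $H^1_c$ reformulation is clean and arguably the most conceptual of the three.
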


\begin{proof}
Let $C$ be a clean connected compact codimension $0$ submanifold of $M^{m}$,
with the property that $N=\overline{M^{m}\backslash C}$ is a \smallskip
$0$-neigh\-bor\-hood of infinity, and let $\left\{  N^{j}\right\}  _{j=1}^{k}$
be the collection of connected components of $N^{n}$. It suffices to show that
$k\leq\dim H_{m-1}(M^{m},\partial M^{m};%
\mathbb{Z}
_{2})+1$. For the remainder of this proof (and only this proof), all homology
is with $%
\mathbb{Z}
_{2}$-coefficients.

Note that $\partial C$ is the union of clean codimension $0$ submanifolds
$\partial_{M}C$ and $\operatorname*{Fr}C$, which intersect in their common
boundary $\partial\left(  \operatorname*{Fr}C\right)  $. So by a generalized
version of Poincar\'{e} duality \cite[Th.3.43]{Hat02} and the Universal
Coefficients Theorem, for all $i$, we have
\begin{equation}
H_{i}\left(  C,\partial_{M}C\right)  \cong H_{m-i}\left(  C,\operatorname*{Fr}%
C\right)  \medskip. \label{Identity: Generalized Poincare duality}%
\end{equation}

\noindent\textsc{Claim 1. }$\dim H_{m-1}(C,\partial_{M}C)\geq k-1$.\medskip

By the long exact sequence for the pair $\left(  C,\operatorname*{Fr}C\right)
$, we have%
\[%
\begin{array}
[c]{ccccccc}%
\cdots & \rightarrow & H_{1}(C,\operatorname*{Fr}C) & \twoheadrightarrow &
\widetilde{H}_{0}(\operatorname*{Fr}C) & \rightarrow & \widetilde{H}_{0}(C)\\
&  &  &  & \shortparallel &  & \shortparallel\\
&  &  &  & (%
\mathbb{Z}
_{2})^{k-1} &  & 0
\end{array}
\]

So the claim follows from identity
(\ref{Identity: Generalized Poincare duality}).\medskip

\noindent\textsc{Claim 2. }$\operatorname*{rank}H_{m-1}(N,\partial_{M}N)\geq
k$\medskip

This claim follows from the long exact sequence for the triple $\left(
N,\partial N,\partial_{M}N\right)  $
\[%
\begin{array}
[c]{ccccccc}%
\rightarrow & H_{m}\left(  N,\partial N\right)  & \rightarrow & H_{m-1}\left(
\partial N,\partial_{M}N\right)  & \rightarrowtail & H_{m-1}\left(
N,\partial_{M}N\right)  & \rightarrow\\
& \shortparallel &  & \shortparallel &  &  & \\
& 0 &  & (%
\mathbb{Z}
_{2})^{k} &  &  &
\end{array}
\]
where triviality of $H_{m}\left(  N,\partial N\right)  $ is due to the
noncompactness of all components of $N$, and the middle equality is from
excision.\medskip

The relative Mayer-Vietoris Theorem for pairs \cite[\S 2.2]{Hat02}, applied to
$(M^{m},\partial M^{m})$ expressed as $(C\cup N,\partial_{M}C\cup\partial
_{M}N)$, contains%

\begin{equation}
H_{m-1}(\operatorname*{Fr}C,\partial\operatorname*{Fr}C)\rightarrow
H_{m-1}(C,\partial_{M}C)\oplus H_{m-1}(N,\partial_{M}N)\rightarrow
H_{m-1}(M^{m},\partial M^{m}) \label{Mayer-Vietoris}%
\end{equation}
from which we can deduce
\begin{multline*}
\dim\left(  H_{m-1}(C,\partial_{M}C)\oplus H_{m-1}(N,\partial_{M}N)\right)
\medskip\leq\\
\dim H_{m-1}(\operatorname*{Fr}C,\partial\operatorname*{Fr}C)+\dim
H_{m-1}(M^{m},\partial M^{m})
\end{multline*}
Since $H_{m-1}(\operatorname*{Fr}C,\partial\operatorname*{Fr}C)\cong(%
\mathbb{Z}
_{2})^{k}$ (from excision), then by Claims 1 and 2 we have%
\[
\left(  k-1\right)  +k\leq k+\dim H_{m-1}(M^{m},\partial M^{m}).
\]
So $k\leq\dim H_{m-1}(M^{m},\partial M^{m})+1.$
\end{proof}

\begin{corollary}
\label{Corollary: inward tame implies periph. loc. connected}If $M^{m}$ is
inward tame, then $M^{m}$ is peripherally locally connected at infinity.
\end{corollary}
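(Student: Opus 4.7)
The plan is to start with an arbitrary clean neighborhood of infinity $U \subseteq M^{m}$ and produce a strong $0$-neighborhood of infinity inside $U$. After the standard enlargements of the compact complement (removing compact components and drilling arcs to make each frontier connected), I may assume $U$ itself is a $0$-neighborhood of infinity with components $\{N^{j}\}$. Inward tameness forces every clean neighborhood of infinity, and by restriction of taming homotopies every component $N^{j}$, to be finitely dominated. A boundary collar gives $\operatorname{int}_{M} N^{j} \simeq N^{j}$, so $\operatorname{int}_{M} N^{j}$ has finitely generated homology; and its manifold boundary, the manifold interior of the compact $\operatorname{Fr}_{M} N^{j}$, is homotopy equivalent to $\operatorname{Fr}_{M} N^{j}$ and hence also has finitely generated homology. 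Proposition \ref{Proposition: finitely many ends}, applied to the noncompact connected manifold $\operatorname{int}_{M} N^{j}$, therefore yields a finite number $k_{j}$ of ends.

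Whenever $k_{j} > 1$ I split $N^{j}$ into $k_{j}$ strong partial $0$-neighborhoods, one per end of $\operatorname{int}_{M} N^{j}$. Choose a compact clean codimension-$0$ submanifold $K \subseteq \operatorname{int}_{M} N^{j}$ that separates the ends, so that $\operatorname{int}_{M} N^{j} \setminus K$ has unbounded components $W_{1}, \dots , W_{k_{j}}$, each a connected $1$-ended neighborhood of a single end. Then fix a proper PL collar $c \colon \partial_{M} N^{j} \times [0,1] \hookrightarrow N^{j}$ placed in general position with respect to $\operatorname{Fr} K$, and set
\[
A_{l} \;=\; \{\, x \in \partial_{M} N^{j} : c(x,1) \in W_{l}\,\}, \qquad P_{l} \;=\; W_{l} \cup c(A_{l} \times [0,1]).
\]
Transversality makes each $A_{l}$ a clean codimension-$0$ submanifold of $\partial_{M} N^{j}$ with compact frontier, and properness of $c$ makes $\partial_{M} N^{j} \setminus \bigcup_{l} A_{l} = \{\, x : c(x,1) \in K\,\}$ compact. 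Hence each $P_{l}$ is a noncompact connected clean codimension-$0$ submanifold of $N^{j}$ with compact frontier, and the proper deformation retraction obtained by sliding along the collar shows that $\operatorname{int}_{M} P_{l}$ is properly homotopy equivalent to the $1$-ended $W_{l}$, hence is itself $1$-ended. A final arc-drilling step merges components of $\operatorname{Fr}_{M} P_{l}$ while removing only compact pieces from $\operatorname{int}_{M} P_{l}$, so preserves $1$-endedness.

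Setting $N' = \bigsqcup_{j,l} P_{l}$, the complement $M^{m} \setminus N'$ is the original complement of $U$ together with, for each $j$, the compact set $K$, the compact collar over $\{x : c(x,1) \in K\}$, and the compact drilled pieces; thus $N' \subseteq U$ is a strong $0$-neighborhood of infinity. Since $U$ was arbitrary, $M^{m}$ is peripherally locally connected at infinity. I expect the main obstacle to be the middle paragraph: producing a proper PL collar in general position with $\operatorname{Fr} K$ so that the preimage formula yields clean $A_{l}$ with compact frontier, and carrying out the arc-drilling carefully enough to preserve both connected frontiers and $1$-ended interiors simultaneously.
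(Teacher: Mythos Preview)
Your argument is correct and follows the same core strategy as the paper's proof: apply Proposition~\ref{Proposition: finitely many ends} to each $\operatorname{int}_{M}N^{j}$ (using that inward tameness gives finite domination of $N^{j}$, hence of $\operatorname{int}_{M}N^{j}$, and that the compact $\operatorname{Fr}_{M}N^{j}$ gives finitely generated boundary homology) to conclude that $\operatorname{int}_{M}N^{j}$ has finitely many ends, and then separate those ends by a compact $K$.

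The difference is only in how the resulting pieces are packaged. The paper simply enlarges the compact complement, setting $C=D\cup\bigl(\bigcup_{l}K_{l}\bigr)$ with each $K_{l}$ a compact clean submanifold of $P^{l}\setminus\partial M^{m}$ separating the ends; the components of $\overline{M^{m}\setminus C}$ are then the desired strong partial $0$-neighborhoods. Your route is more explicit: you rebuild each piece $P_{l}$ by gluing the appropriate portion $A_{l}$ of $\partial_{M}N^{j}$ back onto $W_{l}$ via a boundary collar. This achieves the same outcome with more moving parts, but has the virtue of making the $1$-endedness of $\operatorname{int}_{M}P_{l}$ completely transparent.

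One point deserves tightening. For the $P_{l}$ to be pairwise disjoint---and for $\operatorname{int}_{M}P_{l}$ to actually equal $W_{l}$---you need $c(A_{l}\times(0,1])\subseteq W_{l}$. General position of $c$ with $\operatorname{Fr}K$ alone does not guarantee this; a collar track starting in $A_{l}$ could wander through $K$ or through some other $W_{l'}$ before landing in $W_{l}$ at height~$1$. The fix is easy: since $K$ is compact and disjoint from the closed set $\partial_{M}N^{j}$, first shrink (reparametrize) the collar so that its entire image misses $K$. Then each $c(A_{l}\times(0,1])$ is a connected subset of $\operatorname{int}_{M}N^{j}\setminus K$ meeting $W_{l}$, hence lies in $W_{l}$. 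With that adjustment your proper-homotopy-equivalence claim becomes the equality $\operatorname{int}_{M}P_{l}=W_{l}$, the $P_{l}$ are disjoint, and the remainder of the argument goes through cleanly.
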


\begin{proof}
By Lemma \ref{Lemma: relA connected versus Q-A 1-ended}, it suffices to show
that each compact codimension $0$ clean submanifold $D\subseteq M^{m}$ is
contained in a compact codimension $0$ clean submanifold $C\subseteq M^{m}$ so
that if $N=\overline{M^{m}\backslash C}$, then each component $N^{j}$ of $N$
has the property that $N^{j}\setminus\partial M^{m}$ is $1$-ended.

Since $M^{m}$ is inward tame, each of its clean neighborhoods of infinity is
finitely dominated, so $\overline{M^{m}\backslash D}$ has finitely many
components, each of which is finitely dominated. Let $P^{l}$ be one of those
components. Then, $\operatorname*{Fr}P^{l}$ is a compact clean codimension $0$
submanifold of $\partial D$, whose interior is the boundary of $P^{l}%
\setminus\partial M^{m}$. Since $\operatorname*{int}\left(  \operatorname*{Fr}%
P^{l}\right)  $ and $P^{l}\setminus\partial M^{m}$ each have finitely
generated homology ($P^{l}\setminus\partial M^{m}$ is finitely dominated),
then by Proposition \ref{Proposition: finitely many ends}, $P^{l}%
\setminus\partial M^{m}$ has finitely many ends. Choose a compact clean
codimension $0$ submanifold $K_{l}$ of $P^{l}\setminus\partial M^{m}$ that
intersects $\operatorname*{int}(\operatorname*{Fr}P^{l})$ nontrivially and has
exactly one (unbounded) complementary component in $P^{l}\setminus\partial
M^{m}$ for each of those ends. After doing this for each of the component
$P^{l}$ of $\overline{M^{m}\backslash D}$, let $C=D\cup\left(  \cup
K_{l}\right)  $.
\end{proof}

\section{Finite homotopy type and the $\sigma_{\infty}$%
-obstruction\label{Section: Finiteness obstruction}}

Finitely generated projective left $\Lambda$-modules $S$ and $T$ are
\emph{stably equivalent} if there exist finitely generated free $\Lambda
$-modules $F_{1\text{ }}$ and $F_{2}$ such that $S\oplus F_{1}\cong T\oplus
F_{2}$. Under the operation of direct sum, the stable equivalence classes of
finitely generated projective modules form a group $\widetilde{K}_{0}\left(
\Lambda\right)  $, the \emph{reduced projective class group} of $\Lambda$. In
\cite{Wal65}, Wall associated to each path connected finitely dominated space
$P$ a well-defined $\sigma\left(  P\right)  \in\widetilde{K}_{0}\left(
\mathbb{Z}
\lbrack\pi_{1}\left(  P\right)  ]\right)  $ which is trivial if and only if
$P$ has finite homotopy type. (Here $%
\mathbb{Z}
\lbrack\pi_{1}\left(  P\right)  ]$ denotes the integral group ring
corresponding to $\pi_{1}\left(  P\right)  $. In the literature,
$\widetilde{K}_{0}\left(
\mathbb{Z}
\lbrack G]\right)  $ is sometimes abbreviated to $\widetilde{K}_{0}\left(
G\right)  $.) As one of the necessary and sufficient conditions for
completability of a $1$-ended inward tame open manifold $M^{m}$ ($m>5$) with
stable $\operatorname*{pro}$-$\pi_{1}$, Siebenmann defined the \emph{end
obstruction} $\sigma_{\infty}\left(  M^{m}\right)  $, to be (up to sign) the
finiteness obstruction $\sigma\left(  N\right)  $ of an arbitrary clean
neighborhood of infinity $N$ whose fundamental group \textquotedblleft
matches\textquotedblright\ the stable $\operatorname*{pro}$-$\pi_{1}\left(
\varepsilon\left(  M^{m}\right)  \right)  $.\footnote{The main theorem of
\cite{Obr83} incorrectly uses $\sigma(M^{m})$ ---the finiteness obstruction of
the entire manifold $M^{m}$ --- in place of $\sigma_{\infty}\left(
M^{m}\right)  $. The mistake is an erroneous application of Siebenmann's Sum
Theorem to conclude that triviality of $\sigma(M^{m})$ implies triviality of
$\sigma\left(  N\right)  $ for each clean neighborhood of infinity $N$.
Siebenmann \cite{Sie65} (correctly) used the Sum Theorem to show that, in the
case of \emph{stable} pro-$\pi_{1}$, it is enough to check the obstruction
once---for a well-chosen clean neighborhood of infinity. He denoted that
obstruction $\sigma\left(  \varepsilon\right)  $. In our situation (and
O'Brien's) such a simplification is not possible. We use the subscripted
\textquotedblleft$\infty$\textquotedblright\ to help distinguish the general
situation from Siebenmann's special case. }

In cases where $M^{m}$ is multi-ended or has non-stable $\operatorname*{pro}%
$-$\pi_{1}$ (or both), a more general definition of $\sigma_{\infty}\left(
M^{m}\right)  $, introduced in \cite{ChSi76}, is required. Its definition
employs several ideas from \cite[\S 6]{Sie65}. First note that there is a
covariant functor $\widetilde{K}_{0}$ from groups to abelian groups taking $G$
to $\widetilde{K}_{0}(%
\mathbb{Z}
\lbrack G])$, which may be composed with the $\pi_{1}$-functor to get a
functor from path connected spaces to abelian groups; here we use an
observation by Siebenmann allowing base points to be ignored. Next extend the
functor and the finiteness obstruction to non-path-connected $P$ (abusing
notation slightly) by letting
\[
\widetilde{K}_{0}(%
\mathbb{Z}
\left[  \pi_{1}\left(  P\right)  \right]  )=%
{\textstyle\bigoplus}
\widetilde{K}_{0}(%
\mathbb{Z}
\left[  \pi_{1}\left(  P^{j}\right)  \right]  )
\]
where $\left\{  P^{j}\right\}  $ is the set of path components of $P$, and
letting
\[
\sigma\left(  P\right)  =\left(  \sigma(P^{1}),\cdots,\sigma\left(
P^{k}\right)  \right)
\]
recalling that $P$ is finitely dominated and, hence, has finitely many
components---each finitely dominated.

Now, for an inward tame locally finite polyhedron $P$ (or more generally
locally compact ANR), let $\left\{  \,N_{j}\right\}  $ be a nested cofinal
sequence of closed polyhedral neighborhoods of infinity and define%
\[
\sigma_{\infty}\left(  P\right)  =\left(  \sigma\left(  N_{1}\right)
,\sigma\left(  N_{2}\right)  ,\sigma\left(  N_{3}\right)  ,\cdots\right)
\in\underleftarrow{\lim}\left\{  \widetilde{K}_{0}[%
\mathbb{Z}
\lbrack\pi_{1}(N_{i})]\right\}
\]
The bonding maps of the target inverse sequence%
\[
\widetilde{K}_{0}[%
\mathbb{Z}
\lbrack\pi_{1}(N_{1})]\leftarrow\widetilde{K}_{0}[%
\mathbb{Z}
\lbrack\pi_{1}(N_{2})]\leftarrow\widetilde{K}_{0}[%
\mathbb{Z}
\lbrack\pi_{1}(N_{3})]\leftarrow\cdots
\]
are induced by inclusion, with the Sum Theorem for finiteness obstructions
\cite[Th.6.5]{Sie65} assuring consistency. Clearly, $\sigma_{\infty}\left(
P\right)  $ vanishes if and only if each $N_{i}$ has finite homotopy type; by
another application of the Sum Theorem, this happens if and only if
\emph{every} closed polyhedral neighborhood of infinity has finite homotopy type.

\begin{remark}
Alternatively, we could define $\sigma_{\infty}\left(  P\right)  $ to lie in
the inverse limit of the \emph{inverse system} corresponding to all closed
polyhedral neighborhoods of infinity, partially ordered by inclusion. These
inverse limits are isomorphic, and in either case, the combination of
Conditions (\ref{Char2}) and (\ref{Char3}) of Theorem
\ref{Th: Completion Theorem} is equivalent to the requirement that all clean
neighborhoods of infinity have finite homotopy type---a property referred to
as absolute inward tameness in \cite{Gui16}.
\end{remark}

We close this section with an observation that builds upon Lemma
\ref{Lemma: inward tameness of deleted manifolds}. Both play key roles in the
proof of Theorem \ref{Th: Completion Theorem}.

\begin{lemma}
\label{Lemma: absolute inward tameness of deleted manifolds}Let $M^{m}$ be a
manifold and $A$ a clean codimension $0$ submanifold of $\partial M^{m}$. If
$M^{m}$ is inward tame and $\sigma_{\infty}\left(  M^{m}\right)  $ vanishes,
then $M^{m}\backslash A$ is inward tame and $\sigma_{\infty}\left(
M^{m}\backslash A\right)  $ also vanishes.
\end{lemma}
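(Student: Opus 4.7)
The plan is to exhibit a cofinal family of clean polyhedral neighborhoods of infinity in $M^{m}\backslash A$ with finite homotopy type; by the Remark at the end of \S \ref{Section: Finiteness obstruction} this forces $\sigma_{\infty}(M^{m}\backslash A)=0$. Inward tameness of $M^{m}\backslash A$ is already the content of Lemma \ref{Lemma: inward tameness of deleted manifolds}, and the proof of that lemma produces, cofinally in $M^{m}\backslash A$, clean polyhedral neighborhoods of infinity homotopy equivalent to sets of the form $A\cup V$, where $V$ ranges over arbitrarily small clean neighborhoods of infinity in $M^{m}$. Thus the whole task reduces to the following: for such a $V$, show that $A\cup V$ has finite homotopy type.

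Fix $V$ and let $C=\overline{M^{m}\backslash V}$, a clean compact codimension-$0$ submanifold with $V\cap C=\operatorname*{Fr}_{M}V$. In the PL category, $A$, $V$, $C$, and $\operatorname*{Fr}_{M}V$ are subpolyhedra, and since $A\cap V\subseteq V$, one has the subpolyhedral decomposition
\[
A\cup V\;=\;V\cup(A\cap C),\qquad V\cap(A\cap C)\;=\;A\cap\operatorname*{Fr}_{M}V.
\]
The pieces $A\cap C$ and $A\cap\operatorname*{Fr}_{M}V$ are compact, being closed subsets of the compact polyhedra $C$ and $\operatorname*{Fr}_{M}V$; and $V$ is finitely dominated by inward tameness of $M^{m}$. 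Since a finitely dominated space has finitely generated $H_{0}$, and each component of the manifold $V$ is a retract of $V$ via the clopen component structure, $V$ has only finitely many components, each again finitely dominated. Hence Siebenmann's Sum Theorem for Wall's finiteness obstruction \cite[Th.~6.5]{Sie65}, in the componentwise form recalled in \S \ref{Section: Finiteness obstruction}, applies to this decomposition.

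The hypothesis $\sigma_{\infty}(M^{m})=0$ gives $\sigma(V)=0$ via the Remark; and $A\cap C$, $A\cap\operatorname*{Fr}_{M}V$ are compact polyhedra, so their Wall obstructions vanish trivially. The Sum Theorem then yields
\[
\sigma(A\cup V)\;=\;\sigma(V)\,+\,\sigma(A\cap C)\,-\,\sigma(A\cap\operatorname*{Fr}_{M}V)\;=\;0
\]
in $\widetilde{K}_{0}(\mathbb{Z}[\pi_{1}(A\cup V)])$, so $A\cup V$ has finite homotopy type. A final invocation of the Sum Theorem (exactly as in the Remark) promotes this cofinal vanishing to $\sigma_{\infty}(M^{m}\backslash A)=0$.

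The main technical obstacle I expect is the bookkeeping required to invoke the Sum Theorem cleanly in this non-compact, possibly disconnected setting: one must track how the finitely many components of $A\cup V$ are assembled from components of $V$ together with components of the compact polyhedron $A\cap C$, and verify that the inclusion-induced maps on $\widetilde{K}_{0}$ assemble the componentwise Wall obstructions compatibly with the direct-sum convention used to extend $\sigma$ to disconnected spaces in \S \ref{Section: Finiteness obstruction}. Everything else in the argument is a straightforward consequence of inward tameness, cleanness, and the Sum Theorem.
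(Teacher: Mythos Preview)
Your proposal is correct and follows essentially the same approach as the paper: both reduce to showing $A\cup V$ has finite homotopy type via the decomposition $A\cup V=V\cup(A\cap C)$ (the paper writes this as $\overline{A\backslash N}\cup N$), noting that the compact piece $A\cap C$ is a compact $(m-1)$-manifold and invoking the Sum Theorem. The paper's version is terser and does not dwell on the componentwise bookkeeping you flag, but the argument is the same.
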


\begin{proof}
Lemma \ref{Lemma: inward tameness of deleted manifolds} assures us that if
$M^{m}$ is inward tame, then so too is $M^{m}\backslash A$. The latter ensures
that $\sigma_{\infty}\left(  M^{m}\backslash A\right)  $ is defined. Arguing
as we did in the proof of Lemma
\ref{Lemma: inward tameness of deleted manifolds}, $M^{m}\backslash A$
contains arbitrarily small neighborhoods of infinity which are homotopy
equivalent to $A\cup N$, where $N$ is a clean neighborhood of infinity in
$M^{m}$. If $\sigma_{\infty}\left(  M^{m}\right)  =0$, then $N$ has finite
homotopy type; and since $A\cup N=\overline{A\backslash N}\cup N$, where
$\overline{A\backslash N}$ is a compact $\left(  m-1\right)  $-manifold, then
$A\cup N$ has finite homotopy type (by a direct argument or easy application
of the Sum Theorem for the finiteness obstruction). The vanishing of
$\sigma_{\infty}\left(  M^{m}\backslash A\right)  $ then follows from the
above discussion.
\end{proof}

\section{The $\tau_{\infty}$-obstruction\label{Section: Whitehead obstruction}%
}

The $\tau_{\infty}$ obstruction in Condition (\ref{Char4}) of Theorem
\ref{Th: Completion Theorem} was first defined in \cite{ChSi76} and applied to
Hilbert cube manifolds; the role it plays here is similar. It lies in the
derived limit of an inverse sequence of Whitehead groups. For a more detailed
discussion, the reader should see \cite{ChSi76}.

The \emph{derived limit} of an inverse sequence
\[
G_{0}\overset{\lambda_{1}}{\longleftarrow}G_{1}\overset{\lambda_{2}%
}{\longleftarrow}G_{2}\overset{\lambda_{3}}{\longleftarrow}\cdots
\]
of abelian groups is the quotient group:%
\[
\underleftarrow{\lim}^{1}\left\{  G_{i},\lambda_{i}\right\}  =\left(
\prod\limits_{i=0}^{\infty}G_{i}\right)  /\left\{  \left.  \left(
g_{0}-\lambda_{1}g_{1},g_{1}-\lambda_{2}g_{2},g_{2}-\lambda_{3}g_{3}%
,\cdots\right)  \right\vert \ g_{i}\in G_{i}\right\}
\]

\noindent It is a standard fact that pro-isomorphic inverse sequences of
abelian groups have isomorphic derived limits.

Suppose a manifold $M^{m}$ contains a cofinal sequence $\left\{
N_{i}\right\}  $ of clean neighborhoods of infinity with the property that
each inclusion $\operatorname*{Fr}N_{i}\hookrightarrow N_{i}$ is a homotopy
equivalence\footnote{A manifold admitting such sequence of neighborhoods of
infinity is called \emph{pseudo-collarable}. See \cite{Gui00}, \cite{GuTi03}
and \cite{GuTi06} for discussion of that topic.}. Let $W_{i}=\overline
{N_{i}\backslash N_{i+1}}$ and note that $\operatorname*{Fr}N_{i}%
\hookrightarrow W_{i}$ is a homotopy equivalence. See Figure \ref{Figure1}.

\begin{figure}[ptb]
\centering
\includegraphics[
height=3.375in,
width=3.875in
]{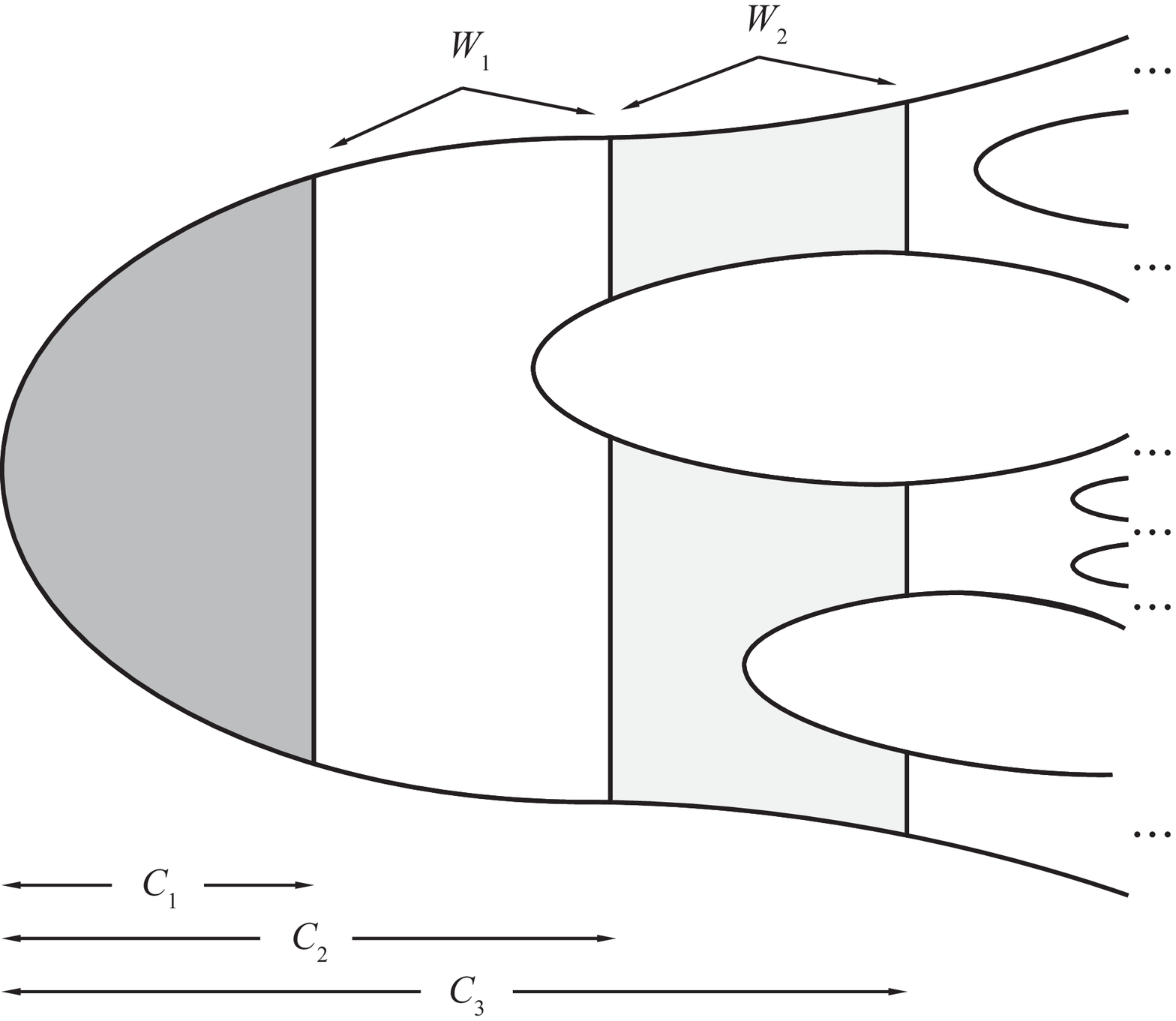}\caption{Decomposition of $M^{m}$ into $\left\{
W_{i}\right\}  _{i=1}^{\infty}$.}%
\label{Figure1}%
\end{figure}

Since $\operatorname*{Fr}N_{i}$ and $W_{i}$ are finite polyhedra, the
inclusion determines a Whitehead torsion $\tau\left(  W_{i},\operatorname*{Fr}%
N_{i}\right)  \in\operatorname*{Wh}(\pi_{1}(\operatorname*{Fr}N_{i}))$ (see
\cite{Coh73}). As in the previous section, we must allow for non-connected
$\operatorname*{Fr}N_{i}$ so we define
\[
\operatorname*{Wh}(\pi_{1}(\operatorname*{Fr}N_{i}))=%
{\textstyle\bigoplus}
\operatorname*{Wh}(\pi_{1}(\operatorname*{Fr}N_{i}^{j}))
\]
where $\left\{  \operatorname*{Fr}N_{i}^{j}\right\}  $ is the (finite) set of
components of $\operatorname*{Fr}N_{i}$ and
\[
\tau\left(  W_{i},\operatorname*{Fr}N_{i}\right)  =\left(  \tau\left(
W_{i}^{1},\operatorname*{Fr}N_{i}^{1}\right)  ,\cdots,\tau\left(  W_{i}%
^{k},\operatorname*{Fr}N_{i}^{k}\right)  \right)  .
\]
These groups fit into and inverse sequence of abelian groups
\[
\operatorname*{Wh}(\pi_{1}(N_{1}))\leftarrow\operatorname*{Wh}(\pi_{1}%
(N_{2}))\leftarrow\operatorname*{Wh}(\pi_{1}(N_{3}))\leftarrow\cdots
\]
where the bonding homomorphisms are induced by inclusions. (To match
\cite{ChSi76}, we have substituted $\pi_{1}(N_{i})$ for the canonically
equivalent $\pi_{1}(\operatorname*{Fr}N_{i})$.) Let $\tau_{i}=\tau\left(
W_{i},\operatorname*{Fr}N_{i}\right)  \in\operatorname*{Wh}(\pi_{1}(N_{i}))$.
Then%
\[
\tau_{\infty}\left(  M^{m}\right)  =[\left(  \tau_{1},\tau_{2},\tau_{3}%
,\cdots\right)  ]\in\underleftarrow{\lim}^{1}\left\{  \operatorname*{Wh}%
(\pi_{1}(N_{i}))\right\}
\]
where $[\left(  \tau_{1},\tau_{2},\tau_{3},\cdots\right)  ]$ is the coset
containing $\left(  \tau_{1},\tau_{2},\tau_{3},\cdots\right)  $.

If $\tau_{\infty}\left(  M^{m}\right)  $ is trivial, it is possible to adjust
the choices of the $N_{i}$ so that each inclusion $\operatorname*{Fr}%
N_{i}\hookrightarrow W_{i}$ has trivial torsion, and hence is a simple
homotopy equivalence. Roughly speaking, the adjustment involves
\textquotedblleft lending and borrowing torsion to and from immediate
neighbors of the $W_{i}$\textquotedblright. The procedure is as described in
\cite[\S 6]{ChSi76}, except that a Splitting Theorem for finite-dimensional
manifolds (see \cite[p.318]{Obr83}) replaces \cite[Lemma 6.1]{ChSi76}. The
reader is warned that the procedure described in \cite[\S 4]{Obr83} is flawed;
we recommend \cite{ChSi76}.

\section{Geometric characterization of completable manifolds and a review of
h- and s-cobordisms}

The following geometric characterization of completable manifolds, which has
analogs in \cite{Tuc74} and \cite{Obr83}, paves the way for the proof of
Theorem \ref{Th: Completion Theorem}. It leads naturally to the consideration
of h- and s-cobordisms, which we will briefly review for later use.

\begin{lemma}
[Geometric characterization of completable manifolds]%
\label{Lemma: geometric characterization}A non-compact manifold with boundary
$M^{m}$ is completable iff $M^{m}=\cup_{i=1}^{\infty}C_{i}$ where, for all $i$:

\begin{enumerate}
[(i)]

\item \label{one} $C_{i}$ is a compact clean codimension $0$ submanifold of
$M^{m}$,

\item \label{two} $C_{i}\subset\operatorname*{Int}C_{i+1}$, and

\item \label{three} if $W_{i}$ denotes $\overline{C_{i+1}\setminus C_{i}}$,
then $(W_{i},\operatorname*{Fr}C_{i})\approx(\operatorname*{Fr}C_{i}%
\times\left[  0,1\right]  ,\operatorname*{Fr}C_{i}\times\left\{  0\right\}  )$.
\end{enumerate}
\end{lemma}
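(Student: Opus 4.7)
The plan is to prove the two implications separately, with the reverse direction following from a stacking-and-compactification argument and the forward direction requiring an inductive geometric construction based on bicollars.

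For $(\Leftarrow)$, assuming the exhaustion $\{C_i\}$ exists, I would use the product structures $(W_i, \operatorname{Fr} C_i) \approx (\operatorname{Fr} C_i \times [0,1], \operatorname{Fr} C_i \times \{0\})$ to inductively identify $\operatorname{Fr} C_{i+1}$ with $\operatorname{Fr} C_i \times \{1\}$, thereby recognizing $\overline{M^m \setminus C_1} = \bigcup_i W_i$ as homeomorphic to $F \times [0, \infty)$, where $F = \operatorname{Fr} C_1$. After reparametrizing $[0, \infty) \cong [0, 1)$, I would form $\widehat{M}^m = C_1 \cup (F \times [0,1])$ by gluing $F \times \{0\}$ to $F \subset \partial C_1$; smoothing corners along $\partial(\operatorname{Fr} C_1) \times \{0\}$ gives a compact manifold with boundary. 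Setting $C = F \times \{1\} \subset \partial \widehat{M}^m$ produces a compactum with $\widehat{M}^m \setminus C \approx M^m$, so $\widehat{M}^m$ is a completion.

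For $(\Rightarrow)$, given a completion $(\widehat{M}^m, C)$, I would build the $C_i$'s as complements of a decreasing sequence $U_1 \supset U_2 \supset \cdots$ of compact clean manifold neighborhoods of $C$ in $\widehat{M}^m$ satisfying $\bigcap U_i = C$ and $\overline{U_i \setminus U_{i+1}} \approx \operatorname{Fr} U_i \times [0,1]$. The starting $U_1$ is obtained by fixing a PL collar $c \colon \partial \widehat{M}^m \times [0,1] \hookrightarrow \widehat{M}^m$ and a compact PL manifold neighborhood $V$ of $C$ in $\partial \widehat{M}^m$, then corner-rounding $c(V \times [0, 1/2])$. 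Inductively, given $U_i$, I would obtain $U_{i+1} \subset \operatorname{Int} U_i$ by one or more bicollar push-ins of $\operatorname{Fr} U_i$ into $\operatorname{Int} U_i$; each push sweeps out a product slab, and a finite end-to-end composition of such slabs is itself a single product cobordism. I would iterate until $U_{i+1}$ lies within the $1/(i+1)$-neighborhood of $C$ in a fixed metric on $\widehat{M}^m$. Defining $C_i = \widehat{M}^m \setminus \operatorname{Int} U_i$, conditions (i) and (ii) are immediate, (iii) follows from the product structure of $\overline{U_i \setminus U_{i+1}} = W_i$, and $\bigcup C_i = M^m$ follows from $\bigcap U_i = C$ together with the compactness of $C$.

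The main obstacle is in the forward direction: ensuring that the bicollar push-ins at each stage can be orchestrated to bring $\operatorname{Fr} U_i$ uniformly close to $C$. Since individual bicollars have bounded and possibly small width, a single push will not suffice; one must iterate push-ins and verify that the resulting iterated sweep remains a product cobordism. This uses the standard fact that a finite end-to-end stack of product cobordisms $F \times [0,1]$ is itself homeomorphic to $F \times [0,1]$, together with the PL bicollaring theorem for clean codimension $1$ submanifolds to guarantee each push is available. With this control over push sizes, the $U_i$'s form a neighborhood basis of $C$ in $\widehat{M}^m$, yielding the desired exhaustion.
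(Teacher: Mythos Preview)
Your reverse direction contains a genuine error. From condition~(iii), the homeomorphism $(W_i,\operatorname{Fr}C_i)\approx(\operatorname{Fr}C_i\times[0,1],\operatorname{Fr}C_i\times\{0\})$ does \emph{not} force $\operatorname{Fr}C_{i+1}$ to correspond to $\operatorname{Fr}C_i\times\{1\}$; it only sends the remaining boundary $\partial_M W_i\cup\operatorname{Fr}C_{i+1}$ onto $\operatorname{Fr}C_i\times\{1\}\cup\partial(\operatorname{Fr}C_i)\times[0,1]$, with no control over how that union is split. Indeed $\operatorname{Fr}C_{i+1}$ need not even be homeomorphic to $\operatorname{Fr}C_i$: take $M^m=B^m\setminus C$ with $C\subseteq S^{m-1}$ a Cantor set, where the frontiers $\operatorname{Fr}C_i$ acquire more and more components as $i$ increases. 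So $\overline{M^m\setminus C_1}$ is in general \emph{not} homeomorphic to $\operatorname{Fr}C_1\times[0,\infty)$, and your proposed $\widehat{M}^m$ does not exist. This is exactly the phenomenon---noncompact $\partial M^m$ meeting the $W_i$ nontrivially---that distinguishes the present setting from Siebenmann's. The paper instead (following Tucker) uses the product structures in the other direction: since $W_i$ is a collar on $\operatorname{Fr}C_i$, one has $C_{i+1}=C_i\cup W_i\approx C_i$, and iterating gives an embedding of $M^m$ into a copy of $C_1$ with the missing set lying in $\partial C_1$.

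Your forward direction is workable in principle but more intricate than necessary. The paper avoids iterated bicollar pushes entirely: it chooses a decreasing sequence $F_1\supseteq F_2\supseteq\cdots$ of compact clean codimension-$0$ submanifolds of $\partial\widehat{M}^m$ with $\bigcap F_i=A$, fixes a single collar $c:\partial\widehat{M}^m\times[0,1]\to\widehat{M}^m$, and sets $C_i=\widehat{M}^m\setminus c(\operatorname{Int}F_i\times[0,1/i))$. Then $W_i\approx F_i\times[0,1/i]$ directly, with $\operatorname{Fr}C_i$ corresponding (after a regular-neighborhood adjustment) to $F_i\times\{1/i\}$. This gives all three conditions at once, with the shrinking toward $A$ handled by the $F_i$ in the boundary rather than by pushes in the ambient manifold.
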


\begin{proof}
For the forward implication, suppose $\widehat{M}^{m}$ is a compact manifold,
$A$ is closed subset set of $\partial\widehat{M}^{m}$, and $M^{m}%
=\widehat{M}^{m}\setminus A$. Write $A$ as $\cap_{i}F_{i}$, where $\left\{
F_{i}\right\}  _{i=1}^{\infty}$ is a sequence of compact clean codimension $0$
submanifolds of $\partial\widehat{M}^{m}$ with $F_{i+1}\subseteq
\operatorname{Int}F_{i}$. Let $c:\partial\widehat{M}^{m}\times\lbrack
0,1]\rightarrow\widehat{M}^{m}$ be a collar on $\partial\widehat{M}^{m}$ with
$c\left(  \partial\widehat{M}^{m}\times\left\{  0\right\}  \right)
=\partial\widehat{M}^{m}$ and, for each $i$, let $C_{i}=\widehat{M}%
^{m}\setminus c\left(  \operatorname{Int}(F_{i})\times\lbrack0,1/i)\right)  $.
Assertions (\ref{one}) and (\ref{two}) are clear. Moreover,
\begin{align*}
W_{i}  &  \approx F_{i}\times\lbrack0,1/i]\setminus\left(  \operatorname{Int}%
F_{i+1}\times\lbrack0,1/(i+1))\right) \\
&  \approx F_{i}\times\lbrack0,1/i]
\end{align*}
via a homeomorphism taking $c\left(  F_{i}\times\left\{  1/i\right\}  \right)
$ onto $F_{i}\times\left\{  1/i\right\}  $. Then, since $\operatorname*{Fr}%
C_{i}=c\left(  F_{i}\times\left\{  1/i\right\}  \cup\partial F_{i}%
\times\left[  0,1/i\right]  \right)  \approx F_{i}$, an application of
relative regular neighborhood theory allows an adjustment of that
homeomorphism so that $\operatorname*{Fr}C_{i}$ is taken onto $F_{i}%
\times\left\{  1/i\right\}  $. A reparametrization of the closed interval
completes the proof of assertion (\ref{three}). (Note that this works even
when the $F_{i}$ have multiple and varying numbers of components. See Figure
\ref{Figure2}.)

\begin{figure}[ptb]
\centering
\includegraphics[
height=2.875in,
width=4in
]{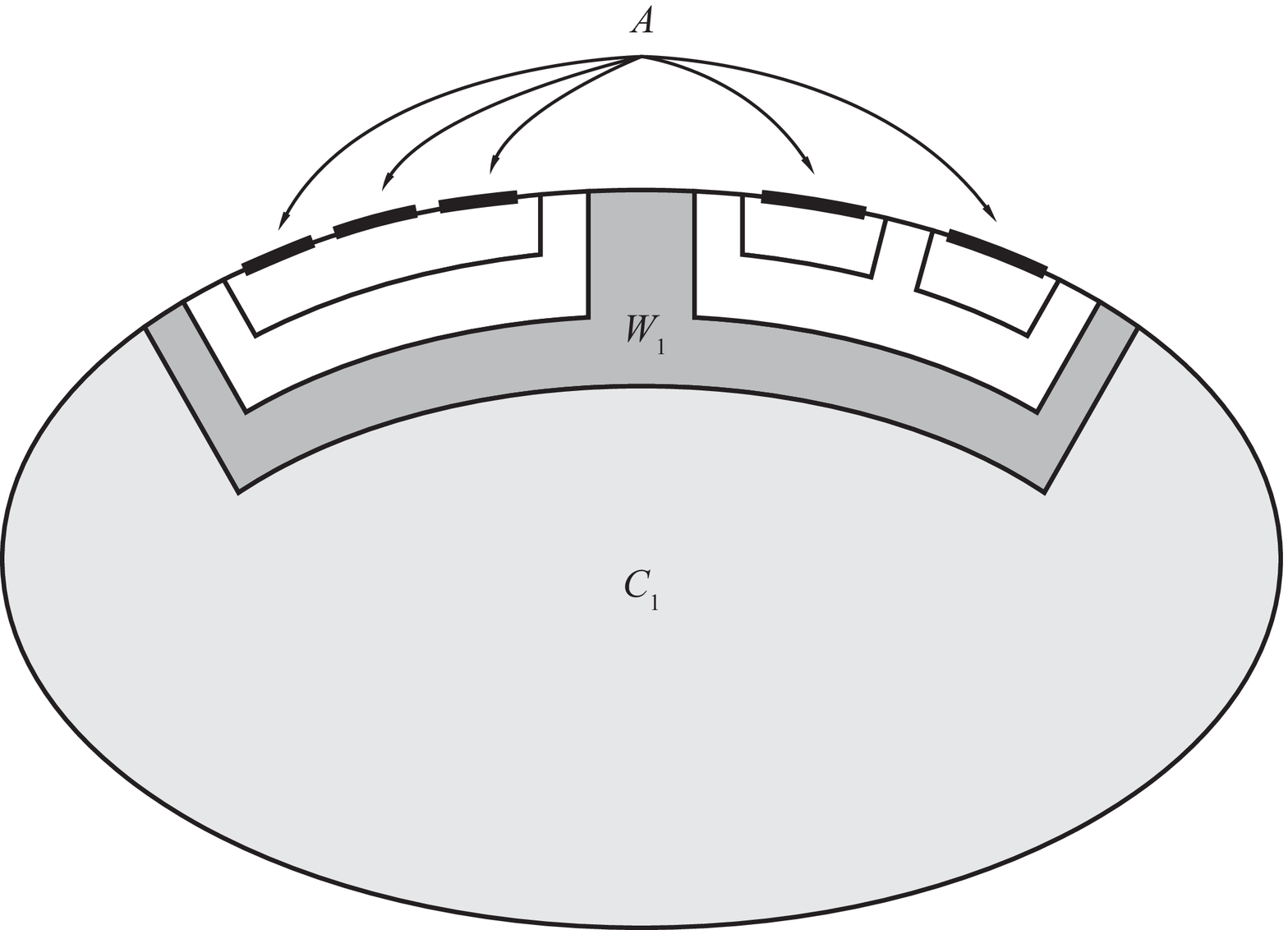}\caption{Decomposing completed $M^{m}$ into product
cobordisms.}%
\label{Figure2}%
\end{figure}

For the converse, we reverse the above procedure to embed $M^{m}$ in a copy of
$C_{1}$. Details can be found in \cite[Lemma 1]{Tuc74}.
\end{proof}

The above lemma shows that a strategy for completing a manifold is to fill up
a neighborhood of infinity in $M^{m}$ with a sequence of cobordisms, then
modify those cobordisms (when possible) so they become products.

Recall that an (absolute) \emph{cobordism} is a triple $\left(  W,A,B\right)
$, where $W$ is a manifold with boundary and $A$ and $B$ are disjoint
manifolds without boundary for which $A\cup B=\partial W$. The triple $\left(
W,A,B\right)  $ is a \emph{relative cobordism} if $A$ and $B$ are disjoint
codimension $0$ clean submanifolds of $\partial W$. In that case, there is an
associated absolute cobordism $\left(  V,\partial A,\partial B\right)  $ where
$V=\partial W\backslash\left(  \operatorname*{int}A\cup\operatorname*{int}%
B\right)  $. We view absolute cobordisms as special cases of relative
cobordisms where $V=\varnothing$. A relative cobordism is an
\emph{h-cobordism} if each of the inclusions $A\hookrightarrow W$,
$B\hookrightarrow W$, $\partial A\hookrightarrow V$, and $\partial
B\hookrightarrow V$ is a homotopy equivalence; it is an \emph{s-cobordism} if
each of these inclusions is a simple homotopy equivalence. (For convenience,
$\varnothing\hookrightarrow\varnothing$ is considered a simple homotopy
equivalence.) A relative cobordism is \emph{nice} if it is absolute or if
$\left(  V,\partial A,\partial B\right)  \approx\left(  \partial
A\times\left[  0,1\right]  ,\partial A\times\left\{  0\right\}  ,\partial
A\times\left\{  1\right\}  \right)  $. The crucial result, proof (and
additional discussion) of which may be found in \cite{RoSa82} , is the following.

\begin{theorem}
[Relative s-cobordism Theorem]A compact nice relative cobordism $\left(
W,A,B\right)  $ with $\dim W\geq6$ is a product, i.e., $\left(  W,A,B\right)
\approx\left(  A\times\left[  0,1\right]  ,A\times\left\{  0\right\}
,A\times\left\{  1\right\}  \right)  $, if and only if it is an s-cobordism.
\end{theorem}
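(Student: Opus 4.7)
The plan is to reduce to the classical absolute s-cobordism theorem by using the product structure on $V$ to straighten the corners of $W$ along $\partial A$ and $\partial B$, thereby converting the relative cobordism into an absolute one. The forward direction is immediate: if $(W,A,B)\approx(A\times[0,1],A\times\{0\},A\times\{1\})$ then all four inclusions in the s-cobordism definition are simple homotopy equivalences (they are deformation retractions along the product factor), and either $\partial A=\varnothing$ (absolute case) or $V\approx\partial A\times[0,1]$, giving niceness.

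For the converse, suppose $(W,A,B)$ is a nice s-cobordism with $\dim W\geq 6$. The case $V=\varnothing$ is the classical Barden--Mazur--Stallings absolute s-cobordism theorem, proved in \cite{RoSa82}. Assume $V\neq\varnothing$ and fix a PL homeomorphism $\psi:V\to\partial A\times[0,1]$ carrying $\partial A$ to $\partial A\times\{0\}$ and $\partial B$ to $\partial A\times\{1\}$. Choose a PL collar $c:V\times[0,\epsilon)\hookrightarrow W$ of $V$ in $W$; then $c$ and $\psi$ together identify a neighborhood of $\partial A\cup\partial B$ in $W$ with $\partial A\times([0,1]\times[0,\epsilon))$, where the edge $[0,1]\times\{0\}$ corresponds to $V$, the edge $\{0\}\times[0,\epsilon)$ to part of $A$, and $\{1\}\times[0,\epsilon)$ to part of $B$. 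I apply a compactly supported PL homeomorphism of the square factor that carries the union of the $A$-edge and the first half of the $V$-edge onto a single straight edge, and similarly for the $B$-edge and the second half. The result is a PL homeomorphism $\Phi$ of $W$ onto an absolute cobordism $(W,A^{\ast},B^{\ast})$, where $A^{\ast}$ is $A$ together with $\psi^{-1}(\partial A\times[0,1/2])$ (a collar on $\partial A$) and $B^{\ast}$ is $B$ together with $\psi^{-1}(\partial A\times[1/2,1])$. By uniqueness of collars, $A^{\ast}\approx A$ and $B^{\ast}\approx B$ as PL manifolds.

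It remains to verify that $(W,A^{\ast},B^{\ast})$ is an absolute s-cobordism and then invoke the absolute theorem. The inclusion $A\hookrightarrow A^{\ast}$ is a deformation retraction of a collar, hence a simple homotopy equivalence, and composing with the given simple equivalence $A\hookrightarrow W$ yields a simple equivalence $A^{\ast}\hookrightarrow W$; the argument for $B^{\ast}$ is symmetric, relying on $\partial A\hookrightarrow V$ being simple to identify torsion across $\psi^{-1}(\partial A\times\{1/2\})$. The absolute s-cobordism theorem now yields $(W,A^{\ast},B^{\ast})\approx(A^{\ast}\times[0,1],A^{\ast}\times\{0\},A^{\ast}\times\{1\})$, and transporting through $A\approx A^{\ast}$ produces the desired product structure on $(W,A,B)$.

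The main obstacle is the corner-straightening step producing $\Phi$. Although visually a $90^{\circ}$ rotation of a square factor inside a collar neighborhood of $V$, making this rigorous in the PL category requires care: one must globally preserve the PL structure, apply relative regular neighborhood theory to justify $A^{\ast}\approx A$, and track how the simple homotopy equivalences transform under the absorption of $V$. The bookkeeping (including the torsion calculation across the middle slice of $V$) is the essential technical content of the reduction, and is the reason the paper simply cites \cite{RoSa82} for the full proof.
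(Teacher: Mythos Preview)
The paper does not give a proof of this theorem; it states the result and refers the reader to \cite{RoSa82} for the proof and further discussion. Your sketch of the reduction to the absolute s-cobordism theorem via corner-straightening is a standard and essentially correct approach, and you correctly note at the end that the paper simply cites the reference rather than carrying out these details.

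One minor point worth tightening in your final step: after the absolute theorem yields a homeomorphism $h:(W,A^{\ast},B^{\ast})\to(A^{\ast}\times[0,1],A^{\ast}\times\{0\},A^{\ast}\times\{1\})$, recovering the triple $(W,A,B)$ as a product requires more than just $A\approx A^{\ast}$. You need to arrange that $h(A)\subseteq A^{\ast}\times\{0\}$ and $h(B)\subseteq A^{\ast}\times\{1\}$ are themselves carried to $A\times\{0\}$ and $A\times\{1\}$ under a single product homeomorphism $A^{\ast}\times[0,1]\to A\times[0,1]$. This is handled by a second application of collar uniqueness (or equivalently by undoing the corner-straightening $\Phi$ on the product), but it is not quite automatic from ``$A\approx A^{\ast}$'' alone. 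This is part of the bookkeeping you allude to, and is indeed absorbed into the reference.
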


\begin{remark}
\label{Remark: adjusting to nice relative cobordisms}A situation similar to a
nice relative cobordism occurs when $\partial W=A\cup B^{\prime}$, where $A$
and $B^{\prime}$ are codimension $0$ clean submanifolds of $\partial W$ with a
common nonempty boundary $\partial A=\partial B^{\prime}$. By choosing a clean
codimension $0$ submanifold $B\subseteq B^{\prime}$ with the property that
$B^{\prime}\backslash\operatorname*{Int}B\approx\partial B\times\left[
0,1\right]  $ we arrive at a nice relative cobordism $\left(  W,A,B\right)  $.
When this procedure is applied, we will refer to $\left(  W,A,B\right)  $ as a
\emph{corresponding }nice relative cobordism. For notational consistency, we
will always adjust the term $B^{\prime}$ on the far right of the triple
$\left(  W,A,B^{\prime}\right)  $, leaving $A$ alone.
\end{remark}

For our purposes, the following lemma will be crucial.

\begin{lemma}
\label{Lemma 7.2} Let $W$ be a compact manifold with $\partial W=A\cup
B^{\prime}$, where $A$ and $B^{\prime}$ are codimension $0$ clean submanifolds
of $\partial W$ with a common boundary. Suppose $A\hookrightarrow W$ is a
homotopy equivalence and that there is a homotopy $J:W\times\lbrack
0,1]\rightarrow W$ such that $J_{0}=\operatorname*{id}_{W}$, $J$ is fixed on
$\partial B^{\prime}$, and $J_{1}\left(  W\right)  \subseteq B^{\prime}$. Then
$B^{\prime}\hookrightarrow W$ is a homotopy equivalence, so the corresponding
nice relative cobordism $\left(  W,A,B\right)  $ is an h-cobordism.
\end{lemma}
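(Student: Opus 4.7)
I would upgrade the map $r = J_1\colon W\to B'$ supplied by $J$---which satisfies $r|_{\partial B'} = \operatorname*{id}$ and $\iota\circ r\simeq \operatorname*{id}_{W}$ rel $\partial B'$---into a two-sided homotopy inverse of $\iota\colon B'\hookrightarrow W$. The relation $\iota\circ r\simeq \operatorname*{id}_{W}$ immediately yields $\iota_{*}\circ r_{*} = \operatorname*{id}$ on every homotopy and homology functor, so $\iota_{*}$ is automatically split surjective on each $\pi_{n}(B')$. What remains is injectivity, which I would treat in two stages: $n=1$ by a van Kampen argument, and $n\geq 2$ by Poincar\'e--Lefschetz duality applied to universal covers.

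For the fundamental group, write $G=\pi_{1}(A)$, $K=\pi_{1}(B')$, $H=\pi_{1}(\partial A)=\pi_{1}(\partial B')$, and let $\alpha\colon H\to G$ and $\beta\colon H\to K$ be induced by boundary inclusion. Van Kampen gives $\pi_{1}(W)\cong G*_{H}K$. The hypothesis that $A\hookrightarrow W$ is a homotopy equivalence forces the natural map $G\to G*_{H}K$ to be an isomorphism; a short normal-form argument in amalgamated free products then shows $\beta$ must be surjective (otherwise some letter in $K$ would fail to reduce into $G$). Now look at $\phi = r\circ\iota_{A}\colon A\to B'$: since $r$ fixes $\partial A = \partial B'$, the restriction $\phi|_{\partial A}$ is exactly the inclusion $\partial A\hookrightarrow B'$, so $\phi_{*}\circ\alpha = \beta$, making $\phi_{*}$ surjective. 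But also $\iota_{*}\circ\phi_{*} = \iota_{A*}$ is an isomorphism (coming from $\iota\circ r\simeq \operatorname*{id}$), so $\phi_{*}$ is injective, hence an isomorphism. Therefore $\iota_{*}= \iota_{A*}\circ\phi_{*}^{-1}$ is an isomorphism on $\pi_{1}$.

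For the higher homotopy, pass to the universal cover $p\colon \widetilde W\to W$ and set $\widetilde A = p^{-1}(A)$, $\widetilde{B'} = p^{-1}(B')$. The $\pi_{1}$-isomorphism just obtained shows that $\widetilde{B'}$ is the universal cover of $B'$---in particular simply connected---while $\widetilde A\hookrightarrow \widetilde W$ is a homotopy equivalence, giving $H_{*}(\widetilde W,\widetilde A) = 0$. Poincar\'e--Lefschetz duality for the compact manifold $W$ with coefficients in $\mathbb{Z}[\pi_{1}W]$ (twisted by the orientation character if $W$ is non-orientable) yields
\[
H^{m-k}\bigl(W,B';\mathbb{Z}[\pi_{1}W]\bigr) \;\cong\; H_{k}\bigl(W,A;\mathbb{Z}[\pi_{1}W]\bigr) \;=\; 0.
\]
Since $C_{*}(\widetilde W,\widetilde{B'})$ is a finite complex of finitely generated free $\mathbb{Z}[\pi_{1}W]$-modules, vanishing of its $\operatorname*{Hom}_{\mathbb{Z}[\pi_{1}W]}$-dual cohomology forces chain contractibility, hence $H_{*}(\widetilde W,\widetilde{B'}) = 0$.

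With $\widetilde{B'}$ and $\widetilde W$ both simply connected CW complexes, Whitehead's theorem promotes the homology equivalence $\widetilde{B'}\hookrightarrow \widetilde W$ to a homotopy equivalence, supplying the missing isomorphisms $\iota_{*}\colon \pi_{n}(B')\to \pi_{n}(W)$ for $n\geq 2$. Combined with the $\pi_{1}$-isomorphism, $\iota$ is a weak---hence genuine---homotopy equivalence, and the corresponding nice relative cobordism $(W,A,B)$ is an $h$-cobordism. I expect the $\pi_{1}$ step to be the main obstacle: the retraction alone only gives a split surjection on $\pi_{1}(B')$, and ruling out a nontrivial kernel forces one to genuinely combine the hypothesis on $A$ with the identity $r|_{\partial A}=\operatorname*{id}$.
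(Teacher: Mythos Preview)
Your duality argument for the higher homotopy groups is essentially the same as the paper's, and it is fine. The gap is in your $\pi_1$ step.

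The claim ``Van Kampen gives $\pi_1(W)\cong G*_H K$'' is not justified. The decomposition $A\cup_{\partial A}B'$ is a decomposition of $\partial W$, not of $W$; van Kampen applied to it computes $\pi_1(\partial W)$, and there is no reason for $\pi_1(\partial W)\to\pi_1(W)$ to be an isomorphism in general (think of a handlebody). Nothing in the hypotheses gives you a decomposition of $W$ itself with intersection homotopy equivalent to $\partial A$. Consequently the chain of deductions ``$G\to G*_H K$ is an isomorphism $\Rightarrow$ $\beta$ is onto $\Rightarrow$ $\phi_*$ is onto'' never gets started. (Even granting the van Kampen identification, the normal-form argument you invoke for ``$\beta$ onto'' needs $\alpha$ and $\beta$ injective, which is not assumed; in a bare pushout of groups, $G\to G*_H K$ surjective only forces $\beta(H)$ to \emph{normally} generate $K$.)

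The paper closes this gap by a different route. With $f=J_1|_A\colon A\to B'$ one has $\iota\circ f = J_1\circ i$, so $f_*$ is injective on $\pi_1$ (since $i_*$ is an isomorphism and $(J_1)_*=\operatorname*{id}$). For surjectivity the paper appeals to Epstein's degree theorem \cite{Eps66}: because $f$ is a map between compact $(m-1)$-manifolds restricting to the identity $\partial A\to\partial B'$, it has degree $\pm 1$ and therefore $f_*\colon\pi_1(A)\to\pi_1(B')$ is onto. Thus $f_*$ is an isomorphism, and hence so is $\iota_*=i_*\circ f_*^{-1}$. This is exactly the ``main obstacle'' you anticipated, and it is resolved not by van Kampen but by a degree argument exploiting that $A$ and $B'$ are manifolds of the same dimension with $f|_{\partial A}=\operatorname*{id}$. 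Once $\iota_*$ is a $\pi_1$-isomorphism, your Poincar\'e--Lefschetz/Whitehead argument (equivalently the paper's compactly supported duality on $\widetilde W$) finishes the proof.
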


\begin{proof}
Choose $p\in\partial A=\partial B^{\prime}$, to be used as the basepoint for
$A$, $B^{\prime}$ and $W$. Let $i:A\hookrightarrow W$ and $\iota:B^{\prime
}\hookrightarrow W$ denote inclusions and define $f:A\rightarrow B^{\prime}$
by $f\left(  x\right)  =J_{1}\left(  x\right)  $. Then
\begin{equation}
\iota\circ f=J_{1}\circ i \label{Function compositions}%
\end{equation}
Clearly $J_{1}:W\rightarrow W$ induces the identity isomorphism on $\pi
_{1}\left(  W,p\right)  $, and since $i$ is a homotopy equivalence, it induces
a $\pi_{1}$-isomorphism. So, from (\ref{Function compositions}), we may deduce
that $f_{\ast}:\pi_{1}\left(  A,p\right)  \rightarrow\pi_{1}\left(  B^{\prime
},p\right)  $ is injective. Moreover, since $f$ restricts to the identity
function mapping $\partial A$ onto $\partial B^{\prime}$, \cite{Eps66} allows
us to conclude that $f_{\ast}$ is an isomorphism. From there it follows that
$\iota_{\ast}:\pi_{1}\left(  B^{\prime},p\right)  \rightarrow\pi_{1}\left(
W,p\right)  $ is also an isomorphism.

Let $p:\widetilde{W}\rightarrow W$ be the universal covering projection,
$\widetilde{A}=p^{-1}(A)$, and $\widetilde{B}^{\prime}=p^{-1}(B^{\prime})$.
Since $i_{\ast}$ and $\iota_{\ast}$ are both $\pi_{1}$-isomorphisms these are
the universal covers of $A$ and $B^{\prime}$, respectively. By generalized
Poincar\'{e} duality for non-compact manifolds,%

\[
H_{k}(\widetilde{W},\widetilde{B}^{\prime};%
\mathbb{Z}
)\cong H_{c}^{n-k}(\widetilde{W},\widetilde{A};%
\mathbb{Z}
),
\]
where cohomology is with compact supports. Since $\widetilde{A}\hookrightarrow
\widetilde{W}$ is a proper homotopy equivalence, all of these relative
cohomology groups vanish, so $H_{k}(\widetilde{W},\widetilde{B}^{\prime};%
\mathbb{Z}
)=0$ for all $k$. By the relative Hurewicz theorem, $\pi_{k}(\widetilde{W}%
,\widetilde{B}^{\prime})=0$ for all $k$, so the same is true for $\pi
_{k}(W,B^{\prime})$. An application of Whitehead's theorem allows us to
conclude that $B^{\prime}\hookrightarrow W$ is a homotopy equivalence.
\end{proof}

\section{Proof of the Manifold Completion Theorem:
necessity\label{Section: Proof of Completion Theorem: necessity}}

We will prove necessity of the conditions in Theorem
\ref{Th: Completion Theorem} by a straightforward application of Lemma
\ref{Lemma: geometric characterization}.

\begin{proof}
[Proof of Theorem \ref{Th: Completion Theorem} (necessity)]Suppose
$\widehat{M}^{m}$ is a compact manifold and $A$ is closed subset set of
$\partial\widehat{M}^{m}$such that $M^{m}=\widehat{M}^{m}\setminus A$. As in
the proof of Lemma \ref{Lemma: geometric characterization} write $A=\cap
_{i}F_{i}$, where $\left\{  F_{i}\right\}  $ is a sequence of compact clean
codimension $0$ submanifolds of $\partial\widehat{M}^{m}$ with $F_{i+1}%
\subseteq\operatorname{Int}F_{i}$, and let $c:\partial\widehat{M}^{m}%
\times\lbrack0,1]\rightarrow\widehat{M}^{m}$ be a collar on $\partial
\widehat{M}^{m}$ with $c\left(  \partial\widehat{M}^{m}\times\left\{
0\right\}  \right)  =\partial\widehat{M}^{m}$. For each $i$, let
$\widehat{N}_{i}=c\left(  F_{i}\times\left[  0,1/i\right]  \right)  $ and
$N_{i}=\widehat{N}_{i}\backslash A$. Then $\left\{  N_{i}\right\}  $ is
cofinal sequence of clean neighborhoods of infinity in $M^{m}$ with
$\operatorname*{Fr}N_{i}=c\left(  F_{i}\times\left\{  1/i\right\}
\cup\partial F_{i}\times\left[  0,1/i\right]  \right)  $. Since $F_{i}%
\times\left\{  1/i\right\}  \cup\partial F_{i}\times\left[  0,1/i\right]
\hookrightarrow F_{i}\times\left[  0,1/i\right]  $ and $N_{i}\hookrightarrow
\widehat{N}_{i}$ are both homotopy equivalences, then so is
$\operatorname*{Fr}N_{i}\hookrightarrow N_{i}$; and since each $N_{i}$ has
finite homotopy type, conditions (\ref{Char2}) and (\ref{Char3}) of Theorem
\ref{Th: Completion Theorem} both hold (by the discussion in
\S \ref{Section: finite domination and inward tameness} and
\ref{Section: Finiteness obstruction}).

If we let $W_{i}=\overline{N_{i}\backslash N_{i+1}}$, then $\tau_{\infty
}\left(  M^{m}\right)  $ is determined by the Whitehead torsions of inclusions
$\operatorname*{Fr}N_{i}\hookrightarrow W_{i}$ (see
\S \ref{Section: Whitehead obstruction}). Associate $W_{i}$ with $F_{i}%
\times\lbrack0,1/i]$ and $\operatorname*{Fr}N_{i}$ with $F_{i}\times\left\{
1/i\right\}  \cup\partial F_{i}\times\left[  0,1/i\right]  $, as in the proof
of Lemma \ref{Lemma: geometric characterization}. Then, the fact that both
$F_{i}\times\left\{  1/i\right\}  \hookrightarrow F_{i}\times\left[
0,1/i\right]  $ and $F_{i}\times\left\{  1/i\right\}  \hookrightarrow
F_{i}\times\left\{  1/i\right\}  \cup\partial F_{i}\times\left[  0,1/i\right]
$ are simple homotopy equivalences ensures that $\tau\left(  W_{i}%
,\operatorname*{Fr}N_{i}\right)  =0$. So condition (\ref{Char4}) is satisfied.

It remains to verify the peripheral $\pi_{1}$-stability condition. Fix
$i\geq1$ and let $F_{i}^{j}$ be one component of $F_{i}$, $\widehat{N}_{i}%
^{j}=c\left(  F_{i}^{j}\times\left[  0,1/i\right]  \right)  $ and $N_{i}%
^{j}=\widehat{N}_{i}^{j}\backslash A$. Then $\partial_{M}N_{i}^{j}=c\left(
F_{i}\times\left\{  0\right\}  \right)  \backslash A$ and $N_{i}^{j}$ is
clearly $\partial_{M}N_{i}^{j}$-connected at infinity. For each $k>i$, let
$F_{k}^{\prime}$ be the union of all components of $F_{k}$ contained in
$F_{i}^{j}$, $\widehat{N_{k}^{\prime}}=c\left(  F_{k}^{\prime}\times\left[
0,1/k\right]  \right)  $ and $N_{k}^{\prime}=\widehat{N_{k}^{\prime}%
}\backslash A$. By definition, we may consider the sequence
\begin{equation}
\pi_{1}\left(  \partial_{M}N_{i}^{j}\cup N_{i+1}^{\prime}\right)
\overset{\mu_{2}}{\longleftarrow}\pi_{1}\left(  \partial_{M}N_{i}^{j}\cup
N_{i+2}^{\prime}\right)  \overset{\mu_{3}}{\longleftarrow}\pi_{1}\left(
\partial_{M}N_{i}^{j}\cup N_{i+3}^{\prime}\right)  \overset{\mu_{4}%
}{\longleftarrow}\cdots\label{sequence: verifying peripheral pi1-stability}%
\end{equation}
where basepoints are suppressed and bonding homomorphisms are compositions of
maps induced by inclusions and change-of-basepoint isomorphisms. Each of those
inclusions is the top row of a commutative diagram%
\[%
\begin{array}
[c]{ccc}%
\partial_{M}N_{i}^{j}\cup N_{k}^{\prime} & \hookleftarrow & \partial_{M}%
N_{i}^{j}\cup N_{k+1}^{\prime}\\
\downarrow\operatorname*{incl} &  & \downarrow\operatorname*{incl}\\
\partial_{M}N_{i}^{j}\cup\widehat{N_{k}^{\prime}} &  & \partial_{M}N_{i}%
^{j}\cup\widehat{N_{k+1}^{\prime}}\\
\downarrow\ \approx &  & \downarrow\ \approx\\
(F_{i}^{j}\times\left\{  0\right\}  )\cup\left(  F_{k}^{\prime}\times\left[
0,1/k\right]  \right)  & \hookleftarrow & (F_{i}^{j}\times\left\{  0\right\}
)\cup\left(  F_{k+1}^{\prime}\times\left[  0,1/k+1\right]  \right)
\end{array}
\]
where the bottom row is an obvious homotopy equivalence, as are all vertical
maps. It follows that the initial inclusion is a homotopy equivalence as well.
As a result, all bonding homomorphisms in
(\ref{sequence: verifying peripheral pi1-stability}) are isomorphisms, so the
sequence is stable.
\end{proof}

\section{Proof of the Manifold Completion Theorem:
sufficiency\label{Section: Proof of Completion Theorem: sufficiency}}

Throughout this section $\left\{  C_{i}\right\}  _{i=1}^{\infty}$ will denote
a clean compact exhaustion of $M^{m}$ with a corresponding cofinal sequence of
clean \smallskip$0$-neigh\-bor\-hoods of infinity $\left\{  N_{i}\right\}
_{i=1}^{\infty}$, each of which has a finite set of connected components
$\left\{  N_{i}^{j}\right\}  _{j=1}^{k_{i}}$. For each $i$ we will let
$W_{i}=\overline{N_{i}\backslash N_{i+1}}$, a compact clean codimension $0$
submanifold of $M^{m}$. Note that $\partial W_{i}$ may be expressed as
$\operatorname*{Fr}N_{i}\cup(\partial_{M}W_{i}\cup\operatorname*{Fr}N_{i+1})$,
a union of two clean codimension $0$ submanifolds of $\partial W_{i}$
intersecting in a common boundary $\partial\left(  \operatorname*{Fr}%
N_{i}\right)  $. (Figures \ref{Figure2} and \ref{Figure1} contain useful
schematics.) The proof of Theorem \ref{Th: Completion Theorem} will be
accomplished by gradually improving the exhaustion of $M^{m}$ so that
ultimately, conditions (\ref{one})-(\ref{three}) of Lemma
\ref{Lemma: geometric characterization} are all satisfied.

\begin{lemma}
\label{Lemma 9.3} If $M^{m}$ is inward tame and $\sigma_{\infty}(M^{m})$
vanishes, then for each $i$, $\sigma(N_{i})$ and $\sigma(N_{i}\backslash
\partial M^{m})$ are both zero.
\end{lemma}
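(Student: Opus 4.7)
The plan is to extract both vanishing statements directly from the definition of $\sigma_\infty(M^m)$ together with Lemma~\ref{Lemma 4.1}. Inward tameness of $M^m$ guarantees that each $N_i$ is finitely dominated, so $\sigma(N_i) \in \widetilde{K}_0(\pi_1(N_i))$ is defined, the bonding maps of the target inverse sequence make sense, and $\sigma_\infty(M^m)$ itself is meaningful.

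For the first assertion, I would recall from \S\ref{Section: Finiteness obstruction} that $\sigma_\infty(M^m)$ is represented in $\underleftarrow{\lim}\,\widetilde{K}_0(\pi_1(N_i))$ by the compatible sequence $(\sigma(N_1),\sigma(N_2),\sigma(N_3),\ldots)$. Since the zero element of the inverse limit is the constantly zero sequence, the hypothesis $\sigma_\infty(M^m)=0$ forces $\sigma(N_i)=0$ coordinate-wise. Equivalently, one can invoke the remark already recorded in \S\ref{Section: Finiteness obstruction} that vanishing of $\sigma_\infty$ is equivalent to every clean neighborhood of infinity having finite homotopy type, and specialize to the $N_i$ at hand.

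For the second assertion, I would observe that $N_i\cap\partial M^m=\partial_M N_i$ is a clean codimension-$0$ submanifold of $\partial N_i$, so $N_i\setminus\partial M^m=N_i\setminus\partial_M N_i$. Applying Lemma~\ref{Lemma 4.1} to the manifold $N_i$ with $A=\partial_M N_i$ gives a (deformation-retract) homotopy equivalence $N_i\setminus\partial_M N_i\hookrightarrow N_i$; since the Wall obstruction is a homotopy invariant of finitely dominated spaces, $\sigma(N_i\setminus\partial M^m)=\sigma(N_i)=0$. I do not anticipate a real obstacle: the lemma simply repackages the definitional content of $\sigma_\infty(M^m)=0$ in a form ---uniform over the exhaustion $\{N_i\}$ and stable under deleting the boundary portion $\partial_M N_i$--- that the subsequent s-cobordism arguments can feed on.
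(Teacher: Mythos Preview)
Your proposal is correct and follows essentially the same route as the paper: invoke the discussion in \S\ref{Section: Finiteness obstruction} to conclude that each $N_i$ has finite homotopy type (so $\sigma(N_i)=0$), then use the homotopy equivalence $N_i\backslash\partial_M N_i\hookrightarrow N_i$ from Lemma~\ref{Lemma 4.1} to transfer finiteness to $N_i\backslash\partial M^m$. The paper's proof is just a terser version of exactly this argument.
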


\begin{proof}
By our discussion in \S \ref{Section: Finiteness obstruction}, if $M^{m}$ is
inward tame and $\sigma_{\infty}(M^{m})=0$, then each $N_{i}$ has finite
homotopy type. Since $N_{i}\hookrightarrow N_{i}\backslash\partial M^{m}$ is a
homotopy equivalence, so does $N_{i}\backslash\partial M^{m}$.
\end{proof}

\begin{proposition}
\label{Prop. 9.4} If $M^{m}$ satisfies Conditions (\ref{Char2})-(\ref{Char3})
of Theorem \ref{Th: Completion Theorem} then the $\left\{  C_{i}\right\}  $
and the corresponding $\left\{  N_{i}\right\}  $ can be chosen so that, for
each $i$,

\begin{enumerate}

\item \label{Char 9.1} $\operatorname*{Fr}N_{i}\hookrightarrow N_{i}$ is a
homotopy equivalence, and

\item \label{Char 9.2} $\partial_{M}W_{i}\cup\operatorname*{Fr}N_{i+1}%
\hookrightarrow N_{i}$ is a homotopy equivalence; therefore,

\item the nice relative cobordisms corresponding to $\left(  W_{i}%
,\operatorname*{Fr}N_{i},\partial_{M}W_{i}\cup\operatorname*{Fr}%
N_{i+1}\right)  $ are h-cobordisms.
\end{enumerate}
\end{proposition}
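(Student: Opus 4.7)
The plan is to iteratively enlarge the $C_i$ so that the exhaustion realizes the model of Lemma \ref{Lemma: geometric characterization} as closely as our hypotheses allow; part (3) will then be immediate from (1) and (2) via the definition of h-cobordism together with Remark \ref{Remark: adjusting to nice relative cobordisms}. I would first use Condition (\ref{Char1}) and Corollary \ref{Corollary: inward tame implies periph. loc. connected} to choose an initial cofinal sequence $\{N_i\}$ of strong $0$-neighborhoods of infinity in which each peripheral $\operatorname*{pro}$-$\pi_1$ inverse sequence is already stable. Conditions (\ref{Char2}) and (\ref{Char3}), via Lemma \ref{Lemma 9.3}, ensure that each $N_i$, each $N_i\setminus\partial M^m$, and each $W_i$ has finite homotopy type.

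The central step is to establish (\ref{Char 9.1}) by a Siebenmann-style handle-trading argument, performed componentwise and inductively in $i$. For each $N_i^j$ I would attach $1$- and $2$-handles to $C_i$ inside $\operatorname*{int}_M N_i^j$ to bring $\pi_1(\operatorname*{Fr} N_i^j)$ in line with the stable image in $\pi_1(N_i^j)$, then trade higher-index handles to kill the relative homotopy groups $\pi_k(N_i^j,\operatorname*{Fr} N_i^j)$. Finite homotopy type of $N_i^j$ guarantees termination, the hypothesis $m\geq 6$ supplies room for the standard PL handle moves, and careful bookkeeping prevents these level-$i$ operations from disturbing what has been arranged at levels $<i$.

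Given (\ref{Char 9.1}), condition (\ref{Char 9.2}) follows from Lemma \ref{Lemma 7.2} applied to each compact cobordism $W_i$ with $A=\operatorname*{Fr} N_i$ and $B'=\partial_M W_i\cup\operatorname*{Fr} N_{i+1}$. That $A\hookrightarrow W_i$ is a homotopy equivalence follows from (\ref{Char 9.1}) at levels $i$ and $i+1$ together with the decomposition $N_i = W_i\cup N_{i+1}$ where $W_i\cap N_{i+1}=\operatorname*{Fr} N_{i+1}$, since $\operatorname*{Fr} N_{i+1}\hookrightarrow N_{i+1}$ is also a homotopy equivalence. The requisite taming homotopy $J:W_i\times[0,1]\to W_i$ comes from inward tameness of $N_i$ (Lemma \ref{Lemma: inward tameness of deleted manifolds}) combined with a collar retraction onto $\operatorname*{Fr} N_{i+1}$: push $N_i$ into a compact subset of $\operatorname*{int} N_{i+1}$, then retract through the collar back onto $\operatorname*{Fr} N_{i+1}\subset B'$; the Homotopy Extension Property permits fixing $J$ on $\partial B'$ and keeping everything inside $W_i$.

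The principal obstacle is the handle-trading in the second paragraph. It is there that peripheral $\pi_1$-stability is genuinely used --- without a stable target fundamental group there is no canonical choice for what $\pi_1(\operatorname*{Fr} N_i^j)$ should be adjusted to --- and where the hypothesis $m\geq 6$ first bites. The presence of $\partial M^m$ forces all handle operations to remain in $\operatorname*{int}_M N_i^j$, which is technical but conceptually standard.
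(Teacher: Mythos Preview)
Your overall strategy---Siebenmann-style improvement of the $N_i$ to achieve (\ref{Char 9.1}), then Lemma \ref{Lemma 7.2} for (\ref{Char 9.2})---matches the paper's, and your derivation that $\operatorname*{Fr} N_i \hookrightarrow W_i$ is a homotopy equivalence from (\ref{Char 9.1}) at levels $i$ and $i+1$ is exactly the paper's Claim~1. The paper streamlines your second paragraph by quoting Siebenmann's Relativized Main Theorem \cite[Th.~10.1]{Sie65} as a black box applied to each $N_i^j\setminus\partial M^m$ (after translating the hypotheses via Lemmas \ref{Lemma: relA pro-pi1 versus Q-A pro-pi1}, \ref{Lemma: inward tameness of deleted manifolds}, and \ref{Lemma: absolute inward tameness of deleted manifolds}) rather than redoing the handle trading, but that is a difference of presentation, not of mathematics.

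There is, however, a genuine gap in your construction of the homotopy $J$ required by Lemma \ref{Lemma 7.2}. You propose to obtain $J$ from \emph{inward tameness} of $N_i$, describing a push of $N_i$ ``into a compact subset of $\operatorname*{int} N_{i+1}$''. But a taming homotopy pulls $N_i$ toward $\operatorname*{Fr} N_i$---the wrong direction. What Lemma \ref{Lemma 7.2} needs is a homotopy pushing $W_i$ \emph{away} from $\operatorname*{Fr} N_i$, into a collar of $\partial_M W_i$ together with $N_{i+1}$, while fixing $\partial(\operatorname*{Fr} N_i)$. The paper obtains this by extracting from Siebenmann's theorem not merely the homotopy equivalence $\operatorname*{Fr} N_i\hookrightarrow N_i$, but the stronger geometric conclusion $N_i\setminus\partial M^m\approx\operatorname*{int}(\operatorname*{Fr} N_i)\times[0,\infty)$; the product coordinate then supplies the outward push (the paper's homotopy $K$), which is composed with the retraction $\hat r:N_i\to W_i$ and a collar collapse of $B$ onto $\partial_M W_i$. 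Your handle-trading, if carried out in full, would also produce this open collar---but you must recognize that you need it and invoke it in place of inward tameness. The bare homotopy equivalence (\ref{Char 9.1}) does not by itself furnish a homotopy moving $W_i$ off $\operatorname*{Fr} N_i$, and the Homotopy Extension Property cannot manufacture the missing direction of motion.
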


\begin{proof}
By Lemma \ref{Lemma 9.3} and the definition of peripheral $\pi_{1}$-stability
at infinity, we can begin with a clean compact exhaustion $\left\{
C_{i}\right\}  _{i=1}^{\infty}$of $M^{m}$ and a corresponding sequence of
neighborhoods of infinity $\left\{  N_{i}\right\}  _{i=1}^{\infty}$, each with
a finite set of connected components $\left\{  N_{i}^{j}\right\}
_{j=1}^{k_{i}}$, so that for all $i\geq1$ and $1\leq j\leq k_{i}$,

\begin{enumerate}
\item[i)] $N_{i}^{j}$ is inward tame,

\item[ii)] $N_{i}^{j}$ is $\left(  \partial_{M}N_{i}^{j}\right)  $-connected
and $(\partial_{M}N_{i}^{j})$-$\pi_{1}$-stable at infinity, and

\item[iii)] $\sigma_{\infty}\left(  N_{i}^{j}\right)  =0$.
\end{enumerate}

By Lemmas \ref{Lemma: inward tameness of deleted manifolds},
\ref{Lemma: relA pro-pi1 versus Q-A pro-pi1}, and
\ref{Lemma: absolute inward tameness of deleted manifolds}, this implies that

\begin{enumerate}
\item[i$^{\prime}$)] $N_{i}^{j}\backslash\partial_{M}N_{i}^{j}$ is inward tame,

\item[ii$^{\prime}$)] $N_{i}^{j}\backslash\partial M^{m}$ is $1$-ended and has
stable fundamental group at infinity, and

\item[iii$^{\prime}$)] $\sigma_{\infty}\left(  N_{i}^{j}\backslash\partial
M^{m}\right)  =0$.
\end{enumerate}

These are precisely the hypotheses of Siebenmann's Relativized Main Theorem
(\cite[Th.10.1]{Sie65}), so $N_{i}^{j}\backslash\partial M^{m}$ contains an
open collar neighborhood of infinity $V_{i}^{j}\approx\partial V_{i}^{j}%
\times\lbrack0,\infty)$. Following the proof in \cite{Sie65} (similar to what
is done in \cite[Th.3.2]{Obr83}), this can be done so that $\partial N_{i}%
^{j}\backslash\partial M^{m}$ ($=\operatorname*{int}(\operatorname*{Fr}%
N_{i}^{j})$) and $\partial V_{i}^{j}$ contain clean compact codimension $0$
submanifolds $A_{i}^{j}$ and $B_{i}^{j},$ respectively, so that $(\partial
N_{i}^{j}\backslash\partial M^{m})\backslash\operatorname*{int}A_{i}%
^{j}=\partial V_{i}^{j}\backslash\operatorname*{int}B_{i}^{j}\approx\partial
A_{i}^{j}\times\lbrack0,1)$. See Figure \ref{Figure3}.

\begin{figure}[ptb]
\centering
\includegraphics[
height=2.5in,
width=4in
]{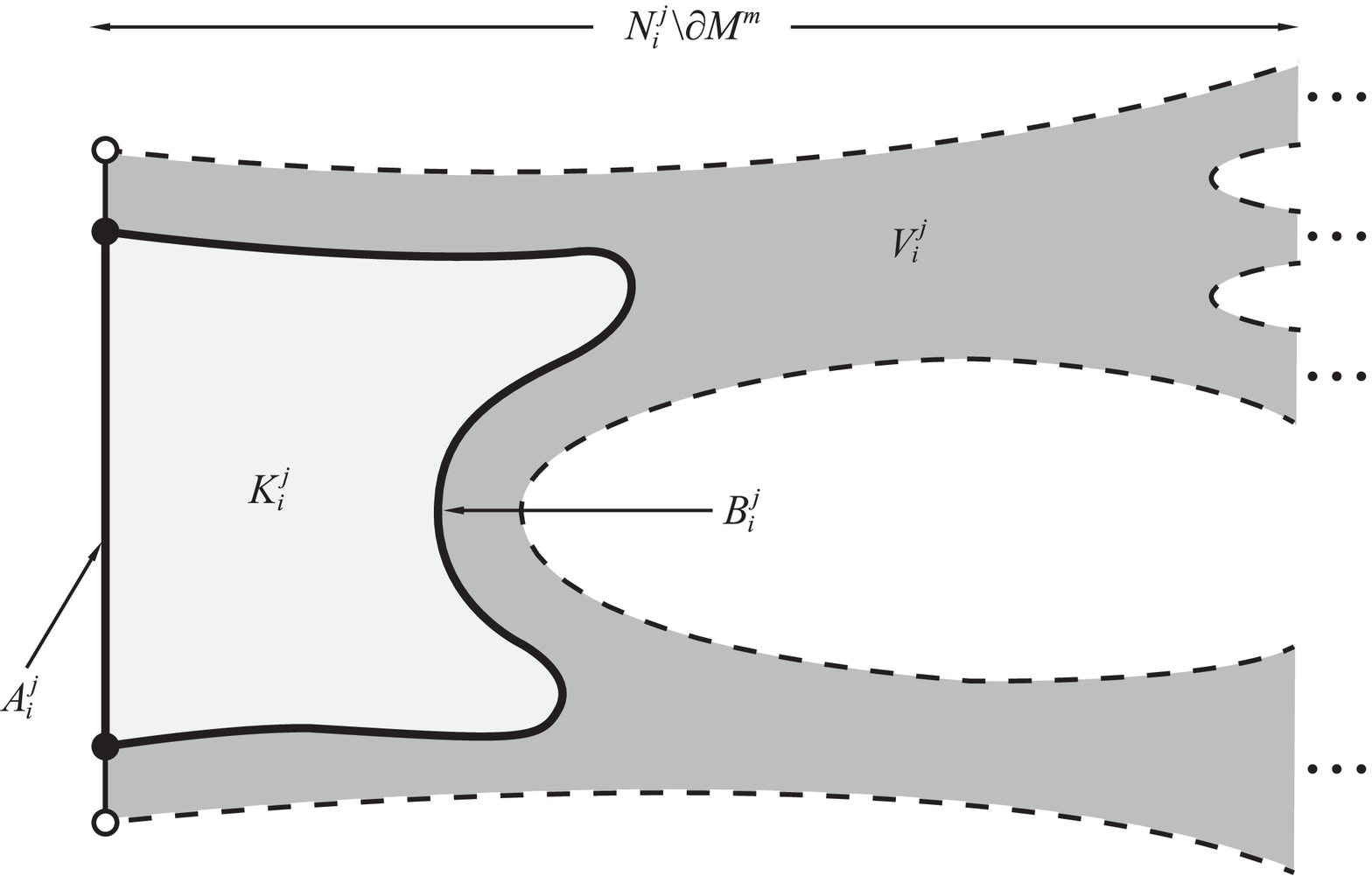}\caption{$V_{i}^{j}\approx\partial V_{i}^{j}\times
\lbrack0,1)$ contained in $N_{i}^{j}\backslash\partial M^{m}$.}%
\label{Figure3}%
\end{figure}

Then $K_{i}^{j}=\overline{N_{i}^{j}\backslash V_{i}^{j}}$ is a clean
codimension $0$ submanifold of $M^{m}$ which intersects $C_{i}$ in $A_{i}^{j}%
$. To save on notation, replace\ $C_{i}$ with $C_{i}\cup\left(  \cup K_{i}%
^{j}\right)  $, which is still a clean compact codimension $0$ submanifold of
$M^{m}$, but with the added property that
\begin{equation}
N_{i}\backslash\partial M^{m}\approx\operatorname*{int}(\operatorname*{Fr}%
N_{i})\times\lbrack0,\infty). \label{Condition: collar condition}%
\end{equation}
Since adding $\partial_{M}N_{i}$ back in does not affect homotopy types, we
also have that
\begin{equation}
\operatorname*{Fr}N_{i}\hookrightarrow N_{i}\text{ is a homotopy equivalence.}
\label{Condition: h.e. condition}%
\end{equation}
Having enlarged the $C_{i}$, pass to a subsequence if necessary to regain the
property that $C_{i}\subseteq\operatorname*{Int}C_{i+1}$ for all $i$.

Letting $N_{i}=\overline{M^{m}\backslash C_{i}}$ gives a nested cofinal
sequence of clean neighborhoods of infinity $\{N_{i}\}$ with the property that
each inclusion $\operatorname*{Fr}N_{i}\hookrightarrow N_{i}$ is a homotopy
equivalence; in other words, we have obtained a pseudo-collar structure on
$M^{m}$. For each $i\geq1$, let $W_{i}=\overline{N_{i}\backslash N_{i+1}}$, a
clean compact codimension $0$ submanifold of $M^{m}$ with $\partial
W_{i}=\operatorname*{Fr}N_{i}\cup(\partial_{M}W_{i}\cup\operatorname*{Fr}%
N_{i+1})$.\medskip

\noindent\textsc{Claim 1. }$\operatorname*{Fr}N_{i}\hookrightarrow W_{i}$
\emph{is a homotopy equivalence.}\medskip

Condition (\ref{Condition: h.e. condition}) applied to $N_{i}$ ensures the
existence a strong deformation retraction $H_{t}$ of $N_{i}$ onto
$\operatorname*{Fr}N_{i}$. That same condition applied to $N_{i+1}$ ensures
the existence of a retraction $r:N_{i+1}\rightarrow\operatorname*{Fr}N_{i+1}$,
which extends to a retraction $\widehat{r}:N_{i}\rightarrow W_{i}$. The
composition $\widehat{r}H_{t}$, restricted to $W_{i}$, gives a deformation
retraction of $W_{i}$ onto $\operatorname*{Fr}N_{i}$.\medskip

\noindent\textsc{Claim 2. }$\partial_{M}W_{i}\cup\operatorname*{Fr}%
N_{i+1}\hookrightarrow W_{i}$ \emph{is a homotopy equivalence.}\medskip

By applying Lemma \ref{Lemma 7.2}, it is enough to show that there exists a
homotopy $H:W_{i}\times\left[  0,1\right]  \rightarrow W_{i}$, fixed on
$\partial(\operatorname*{Fr}N_{i})$, with the property that $H_{1}\left(
W_{i}\right)  \subseteq\partial_{M}W_{i}\cup\operatorname*{Fr}N_{i+1}$. Toward
that end, let $B$ be a collar neighborhood of $\partial_{M}W_{i}$ in $W_{i}$
and let $D=\overline{W_{i}\backslash B}$. Use the collar structure on
$N_{i}\backslash\partial M^{m}$ to obtain a homotopy $K:N_{i}\times\left[
0,1\right]  \rightarrow N_{i}$, fixed on $\partial(\operatorname*{Fr}N_{i})$,
which pushes $N_{i}$ into the complement of $D$; in other words $K_{1}\left(
N_{i}\right)  \subseteq B\cup N_{i+1}$. Compose this homotopy with the
retraction $\widehat{r}:N_{i}\rightarrow W_{i}$ used in the previous claim to
get a homotopy $\widehat{r}K_{t}$ of $W_{i}$ (still fixed on $\partial
(\operatorname*{Fr}N_{i})$) with $\widehat{r}K_{1}\left(  W_{i}\right)
\subseteq B\cup\operatorname*{Fr}N_{i+1}$. Follow this with a homotopy that
deformation retracts $B$ onto $\partial_{M}W_{i}$ while sending
$\operatorname*{Fr}N_{i+1}$ into itself to complete the desired homotopy and
prove Claim 2.\medskip

We can now write $M^{m}=C_{1}\cup W_{1}\cup W_{2}\cup W_{3}\cup\cdots$ where,
for each $i$,

\begin{itemize}
\item $W_{i}$ is a compact clean codimension $0$ submanifold of $M^{m}$,

\item $\partial W_{i}=\operatorname*{Fr}N_{i}\cup(\partial_{M}W_{i}%
\cup\operatorname*{Fr}N_{i+1})$, and

\item both $\operatorname*{Fr}N_{i}\hookrightarrow W_{i}$ and $\partial
_{M}W_{i}\cup\operatorname*{Fr}N_{i+1}\hookrightarrow W_{i}$ are homotopy equivalences.
\end{itemize}

\noindent As such, the corresponding nice relative cobordisms (as described in
Remark \ref{Remark: adjusting to nice relative cobordisms}) are h-cobordisms.
\end{proof}

\begin{proposition}
If $M^{m}$ satisfies Conditions (\ref{Char1})-(\ref{Char4}) of Theorem
\ref{Th: Completion Theorem} the conclusion of Proposition \ref{Prop. 9.4} can
be improved so that, for each $i$, the nice relative cobordisms corresponding
to $\left(  W_{i},\operatorname*{Fr}N_{i},\partial_{M}W_{i}\cup
\operatorname*{Fr}N_{i+1}\right)  $ are s-cobordisms. In that case, $\left(
W_{i},\operatorname*{Fr}N_{i}\right)  \approx\left(  \operatorname*{Fr}%
N_{i}\times\left[  0,1\right]  ,\operatorname*{Fr}N_{i}\times\left\{
0\right\}  \right)  $ for all $i$, and $M^{m}$ is completable.
\end{proposition}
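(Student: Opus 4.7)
The plan is to combine the geometric output of Proposition \ref{Prop. 9.4}---a clean exhaustion of $M^{m}$ by nice h-cobordisms---with the vanishing of $\tau_{\infty}(M^{m})$ from Condition (\ref{Char4}), adjusting the exhaustion so that every h-cobordism becomes an s-cobordism. Once that is accomplished, the Relative s-cobordism Theorem converts each $W_{i}$ into a product, and Lemma \ref{Lemma: geometric characterization} then delivers completability.

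First, I would start with the sequences $\{N_{i}\}$ and $\{W_{i}\}$ produced by Proposition \ref{Prop. 9.4}, where each corresponding nice relative cobordism $(W_{i},\operatorname*{Fr}N_{i},\partial_{M}W_{i}\cup\operatorname*{Fr}N_{i+1})$ is already an h-cobordism. For each $i$, the inclusion $\operatorname*{Fr}N_{i}\hookrightarrow W_{i}$ carries a Whitehead torsion $\tau_{i}:=\tau(W_{i},\operatorname*{Fr}N_{i})\in\operatorname*{Wh}(\pi_{1}(N_{i}))$, and by the construction recalled in \S \ref{Section: Whitehead obstruction}, the coset $[(\tau_{1},\tau_{2},\tau_{3},\ldots)]$ equals $\tau_{\infty}(M^{m})$, which is zero by hypothesis. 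Hence there exist elements $s_{i}\in\operatorname*{Wh}(\pi_{1}(N_{i}))$ with $\tau_{i}=s_{i}-\lambda_{i+1}(s_{i+1})$ for every $i$.

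Next, I would realize these $s_{i}$ geometrically via the ``lending and borrowing torsion'' procedure of \cite[\S 6]{ChSi76}: one attaches compact h-cobordisms of prescribed Whitehead torsion near the frontiers $\operatorname*{Fr}N_{i}$, redistributing torsion between consecutive $W_{i}$'s and producing new choices of $\operatorname*{Fr}N_{i}$ inside $M^{m}$. As indicated at the end of \S \ref{Section: Whitehead obstruction}, the finite-dimensional Splitting Theorem of \cite[p.318]{Obr83} substitutes for the Hilbert-cube splitting \cite[Lemma 6.1]{ChSi76}. The insertions attach collar-like material to the $W_{i}$, so the homotopy equivalences $\operatorname*{Fr}N_{i}\hookrightarrow W_{i}$ and $\partial_{M}W_{i}\cup\operatorname*{Fr}N_{i+1}\hookrightarrow W_{i}$ established in Proposition \ref{Prop. 9.4} persist, while the new values of $\tau(W_{i},\operatorname*{Fr}N_{i})$ all vanish. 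By the duality of Whitehead torsion within an h-cobordism (applied to the absolute h-cobordism underlying the nice relative one, with the product region $\partial_{M}W_{i}\approx\partial(\operatorname*{Fr}N_{i})\times[0,1]$ contributing trivially), $\tau(W_{i},\partial_{M}W_{i}\cup\operatorname*{Fr}N_{i+1})$ also vanishes, so each nice relative cobordism is now an s-cobordism.

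With every nice relative cobordism $(W_{i},\operatorname*{Fr}N_{i},\partial_{M}W_{i}\cup\operatorname*{Fr}N_{i+1})$ an s-cobordism of dimension $m\geq 6$, the Relative s-cobordism Theorem yields $(W_{i},\operatorname*{Fr}N_{i})\approx(\operatorname*{Fr}N_{i}\times[0,1],\operatorname*{Fr}N_{i}\times\{0\})$ for all $i$. Taking $C_{i}$ to be the complement of $\operatorname*{Int}N_{i}$ in $M^{m}$, conditions (\ref{one})--(\ref{three}) of Lemma \ref{Lemma: geometric characterization} are satisfied, and $M^{m}$ is completable. The main obstacle is the torsion-redistribution step itself: it must be performed componentwise on the possibly disconnected $\operatorname*{Fr}N_{i}$ (tracking the direct-sum decomposition of $\operatorname*{Wh}(\pi_{1}(N_{i}))$), the insertions must be chosen compatibly with the pseudo-collar structure and with the $\partial_{M}W_{i}$-side of each cobordism, and one must avoid the error in the parallel argument of \cite[\S 4]{Obr83} by following the Chapman--Siebenmann procedure. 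This is the unique place in the proof where Condition (\ref{Char4}) is invoked.
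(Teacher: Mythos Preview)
Your proposal is correct and follows essentially the same approach as the paper: start from the h-cobordism decomposition of Proposition \ref{Prop. 9.4}, use the vanishing of $\tau_{\infty}(M^{m})$ together with the Chapman--Siebenmann ``lending and borrowing'' procedure (with the finite-dimensional Splitting Theorem of \cite[p.318]{Obr83} in place of \cite[Lemma 6.1]{ChSi76}) to arrange that all torsions $\tau(W_{i},\operatorname*{Fr}N_{i})$ vanish, then apply the Relative s-cobordism Theorem and Lemma \ref{Lemma: geometric characterization}. Your added remarks on the $\underleftarrow{\lim}^{1}$ equation $\tau_{i}=s_{i}-\lambda_{i+1}(s_{i+1})$, torsion duality for the other face, and componentwise bookkeeping are correct elaborations that the paper leaves implicit.
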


\begin{proof}
By the triviality of $\tau_{\infty}\left(  M^{m}\right)  $, it is possible to
adjust the choices of the $N_{i}$ so that each inclusion $\operatorname*{Fr}%
N_{i}\hookrightarrow W_{i}$ has trivial Whitehead torsion, i.e., $\tau\left(
W_{i},\operatorname*{Fr}N_{i}\right)  =0$, and hence is a simple homotopy
equivalence. As was discussed in \S \ref{Section: Whitehead obstruction}, the
adjustment involves \textquotedblleft lending and borrowing torsion to and
from immediate neighbors of the $W_{i}$\textquotedblright\ as described in
\cite[\S 6]{ChSi76}, except that a Splitting Theorem for finite-dimensional
manifolds (see \cite[p.318]{Obr83}) replaces \cite[Lemma 6.1]{ChSi76}.

To complete the proof, apply the Relative s-cobordism Theorem to each $W_{i}$
then apply Lemma \ref{Lemma: geometric characterization}.
\end{proof}

\section{$\mathcal{Z}$-compactifications and the proof of Theorem
\ref{Th: Stabilization}\label{Section: Stabilization Theorem}}

In this section we prove Theorem \ref{Th: Stabilization}. Since $M^{m}%
\times\left[  0,1\right]  $ satisfies Conditions (\ref{Char2}), (\ref{Char3})
and (\ref{Char4}) of Theorem \ref{Th: Completion Theorem} if and only if
$M^{m}$ satisfies those same conditions (see \cite{ChSi76}), it suffices to
prove the following proposition which is based on work found in \cite{Gui07b}.

\begin{proposition}
\label{Prop: peripheral pi1-stability for MxR}If a manifold $M^{m}$ is inward
tame at infinity, then $M^{m}\times\left[  0,1\right]  $ is peripherally
$\pi_{1}$-stable at infinity.
\end{proposition}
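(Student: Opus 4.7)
My plan is to show that for a cofinal sequence $\{N_i\}$ of strong $0$-neighborhoods of infinity in $M^m$, the products $N_i\times[0,1]$ form a cofinal sequence of strong $0$-neighborhoods of infinity in $M^m\times[0,1]$ witnessing peripheral $\pi_1$-stability. By Corollary \ref{Corollary: inward tame implies periph. loc. connected}, inward tameness supplies such a sequence $\{N_i\}$. The set-theoretic verifications that $N_i\times[0,1]$ is clean and that each component $N_i^j\times[0,1]$ is a strong $0$-neighborhood reduce to computing $\partial_{M\times I}(N_i^j\times I)=(N_i^j\times\{0,1\})\cup(\partial_M N_i^j\times I)$ and observing that its complement $(N_i^j\setminus\partial_M N_i^j)\times(0,1)$ is $1$-ended; the latter follows from Lemma \ref{Lemma: relA connected versus Q-A 1-ended} applied to the strong $0$-neighborhood $N_i^j$.

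Next, I would establish peripheral $\pi_1$-stability by analyzing $X_k:=\partial_{M\times I}(N_i^j\times I)\cup(N_k'\times I)$, where $N_k'\subseteq N_i^j$ is the relevant component of $N_k$; because any neighborhood of infinity in $N_i^j\times I$ contains some such $N_k'\times I$, these form a cofinal family. The space $X_k$ is the double mapping cylinder of $N_i^j\hookleftarrow(\partial_M N_i^j\cup N_k')\hookrightarrow N_i^j$, and is therefore homotopy equivalent to the pushout $D_k:=N_i^j\cup_{\partial_M N_i^j\cup N_k'}N_i^j$ obtained by gluing two copies of $N_i^j$ along $\partial_M N_i^j\cup N_k'$. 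By Van Kampen, $\pi_1(D_k)$ is the amalgamated free product $\pi_1(N_i^j)*_{\pi_1(\partial_M N_i^j\cup N_k')}\pi_1(N_i^j)$, with standard adjustments for disconnected gluing loci.

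The crux, and the main obstacle, is to show that the inverse sequence $\{\pi_1(D_k)\}$ stabilizes. Inward tameness of $N_i^j$ (inherited from $M^m$ via Lemma \ref{Lemma: inward tameness of deleted manifolds}) furnishes a taming homotopy $h_t:N_i^j\to N_i^j$, fixed on $\operatorname{Fr}_M N_i^j$, with $h_1(N_i^j)$ compact and, for $k$ large, disjoint from $N_k'$. Applying $h_t$ to both copies of $N_i^j$ in $D_k$ yields a homotopy that automatically agrees along the $\partial_M N_i^j$ portion of the gluing locus but need not agree along the $N_k'$ portion; the resolution is to work in the honest product region $X_k$, where the transverse $I$-direction supplies enough room to interpolate between the two copies. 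This interpolation pushes any crossing loop through $N_k'$ to a loop whose crossings occur only through $\partial_M N_i^j$ together with a fixed smaller $N_{k_0}'$, forcing the image of $\pi_1(X_k)$ in $\pi_1(X_{k_0})$ to stabilize. I would carry this out following the strategy of \cite{Gui07b}, whose methods are tailored precisely to the coherence issue of descending product-level homotopies to the doubled space.
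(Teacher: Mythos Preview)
Your proposal is correct and follows essentially the same route as the paper. Both arguments begin with Corollary~\ref{Corollary: inward tame implies periph. loc. connected} to obtain strong $0$-neighborhoods $N_i^j$, pass to the product neighborhoods $N_i^j\times[0,1]$, identify $\partial_{M\times I}(N_i^j\times I)$ exactly as you do, and then reduce the peripheral $\pi_1$-stability question to the technical machinery of \cite{Gui07b}. The only cosmetic difference is that the paper invokes Lemmas~\ref{Lemma: relA connected versus Q-A 1-ended} and~\ref{Lemma: relA pro-pi1 versus Q-A pro-pi1} to translate the problem into showing that $\operatorname{int}_M(N_i^j)\times(0,1)$ is $1$-ended with stable $\operatorname{pro}$-$\pi_1$ and then cites \cite[Cor.~3.6]{Gui07b} directly, whereas you stay with the spaces $X_k=\partial_{M\times I}(N_i^j\times I)\cup(N_k'\times I)$ and unpack their double-mapping-cylinder structure before appealing to the same source; by Lemma~\ref{Lemma: relA pro-pi1 versus Q-A pro-pi1} these two inverse sequences are pro-isomorphic, so the difference is purely one of packaging. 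Two small points: your $N_k'$ should be the union of all components of $N_k$ lying in $N_i^j$ (not a single component, since $N_i^j$ may be multi-ended), and your citation of Lemma~\ref{Lemma: inward tameness of deleted manifolds} for inward tameness of $N_i^j$ is not quite right---that lemma concerns deleting boundary pieces, whereas inward tameness of $N_i^j$ follows directly from its being a component of a clean neighborhood of infinity in an inward tame manifold.
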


\begin{proof}
Apply Corollary \ref{Corollary: inward tame implies periph. loc. connected} to
obtain a cofinal sequence $\{N_{i}\}$ of clean neighborhoods of infinity for
$M^{m}$ with the property that, for all $i$, each component $N_{i}^{j}$ of
$N_{i}$ is $\partial_{M}N_{i}^{j}$-connected at infinity. Since $\left\{
N_{i}\times\left[  0,1\right]  \right\}  $ is a cofinal sequence of clean
neighborhoods of infinity for $M^{m}\times\lbrack0,1]$ it suffices to show
that the corresponding connected components, $N_{i}^{j}\times\left[
0,1\right]  $, are all $\partial_{M\times\left[  0,1\right]  }(N_{i}^{j}%
\times\left[  0,1\right]  )$-connected and $(\partial_{M\times\left[
0,1\right]  }(N_{i}^{j}\times\left[  0,1\right]  ))$-$\pi_{1}$-stable at
infinity. By Lemmas \ref{Lemma: relA connected versus Q-A 1-ended} and
\ref{Lemma: relA pro-pi1 versus Q-A pro-pi1}, that is equivalent to showing
that, for each $N_{i}^{j}$, $\operatorname*{int}_{M}(N_{i}^{j})\times\left(
0,1\right)  $ is $1$-ended and has stable $\operatorname*{pro}$-$\pi_{1}$ at
that end. Every connected topological space becomes $1$-ended upon crossing
with $\left(  0,1\right)  $, so that condition is immediate. The $\pi_{1}%
$-stability property is proved with a small variation on the main technical
argument from \cite{Gui07b}; in particular, Corollary 3.6 from that paper. The
\textquotedblleft small variation\textquotedblright\ is necessary because the
earlier argument assumed the product of an \emph{open} manifold with $\left(
0,1\right)  $. That issue is easily overcome by arranging that the analog of
homotopy $K_{t}$ used in \cite[Prop.3.3]{Gui07b} sends the manifold interior
of $\operatorname*{Int}_{M}(N_{i}^{j})$ into itself and sends
$\operatorname*{Fr}N_{i}^{j}$ into itself for all $t$. That is easily
accomplished since $\operatorname*{Fr}N_{i}^{j}$ has an open collar
neighborhood at infinity.
\end{proof}

\section{A counterexample to a question of
O'Brien\label{Section: Counterexample}}

We now give a negative answer to a question posed by O'Brien \cite[p.308]%
{Obr83}. \medskip

\noindent\textbf{Question. }\emph{(For a $1$-ended manifold }$M^{m}$\emph{
with $1$-ended boundary), let }$\{V_{i}\}$\emph{ be a cofinal sequence of
clean 0-neighborhoods of infinity. If }$\{\pi_{1}(\partial M^{m}\cup
V_{i})\}_{i\geq1}$\emph{ is stable, does it follow that }$M^{m}$\emph{ is
peripherally }$\pi_{1}$\emph{-stable at infinity?}\medskip

The key ingredient in our counterexamples is a collection of contractible open
$n$-manifolds $W^{n}$ (one for each $n\geq3$), constructed by R. Sternfeld in
his dissertation \cite{Ste77} (see also \cite{Gu18b}). Each $W^{n}$ has the
property that it cannot be embedded in any compact $n$-manifold. Although
these $W^{n}$ have finite homotopy type, they are not inward tame, since they
contain arbitrarily small clean connected neighborhoods of infinity with
non-finitely generated fundamental groups. Our counterexamples will be the
$\left(  n+1\right)  $-manifolds $W^{n}\times\lbrack0,1)$. First a general observation.

\begin{proposition}
\label{Prop 11.1} Let $W^{n}$ be a connected open $n$-manifold. If $W^{n}$ has
finite homotopy type, then $W^{n}\times\lbrack0,1)$ is $1$-ended and inward
tame, with $\sigma_{\infty}\left(  W^{n}\times\lbrack0,1)\right)  =0$.
\end{proposition}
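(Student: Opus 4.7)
The plan is to exhibit an explicit cofinal sequence of clean neighborhoods of infinity in $P := W^{n}\times[0,1)$ each having the homotopy type of $W^{n}$, from which all three conclusions will follow immediately. Fix a clean compact exhaustion $C_{1}\subseteq C_{2}\subseteq\cdots$ of $W^{n}$ and a sequence $0<a_{1}<a_{2}<\cdots$ with $a_{i}\to 1$, and set $D_{i}:=C_{i}\times[0,a_{i}]$, a compact clean codimension-$0$ exhaustion of $P$. Let $N_{i}:=\overline{P\setminus D_{i}}$, a clean $0$-neighborhood of infinity; these are cofinal because any compact $K\subseteq P$ projects to a compact subset of $W^{n}$ and to a compact subset of $[0,1)$.

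The main step is to identify the homotopy type of $N_{i}$. Write $N_{i}=A_{i}\cup B_{i}$, where
\[
A_{i}=W^{n}\times[a_{i},1),\qquad B_{i}=\bigl(\,\overline{W^{n}\setminus C_{i}}\,\bigr)\times[0,a_{i}],
\]
so that $A_{i}\cap B_{i}=(\overline{W^{n}\setminus C_{i}})\times\{a_{i}\}$. The map $(x,t)\mapsto(x,a_{i})$ is a deformation retraction of $B_{i}$ onto $A_{i}\cap B_{i}$, and it extends by the identity on $A_{i}$ to a deformation retraction of $N_{i}$ onto $A_{i}$. Since $A_{i}=W^{n}\times[a_{i},1)\simeq W^{n}$ via projection, we conclude $N_{i}\simeq W^{n}$. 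In particular, each $N_{i}$ is connected (because $W^{n}$ is) and has finite homotopy type, so $\sigma(N_{i})=0$.

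From these facts the three conclusions fall out. Because every $N_{i}$ is connected, the end structure of $P$ consists of the single nested sequence $N_{1}\supseteq N_{2}\supseteq\cdots$, so $P$ is $1$-ended. Because each $N_{i}$ has finite homotopy type (hence is finitely dominated) and $\{N_{i}\}$ is cofinal, $P$ is inward tame. Finally, since $\sigma(N_{i})=0$ for every $i$, the element $\sigma_{\infty}(P)=(\sigma(N_{1}),\sigma(N_{2}),\ldots)\in\varprojlim\widetilde{K}_{0}(\mathbb{Z}[\pi_{1}(N_{i})])$ is zero.

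The only step requiring any verification is the deformation retraction in the middle paragraph, and it is essentially formal: on the overlap $A_{i}\cap B_{i}$ both prescriptions agree, and the straight-line homotopy in the $t$-coordinate on $B_{i}$ extends continuously by the constant homotopy on $A_{i}$. No obstruction-theoretic input is needed beyond the hypothesis that $W^{n}$ itself has finite homotopy type.
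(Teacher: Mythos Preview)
Your proof is correct and follows essentially the same approach as the paper: your neighborhoods $N_{i}$ are exactly the paper's $V(N,a)=(N\times[0,1))\cup(W^{n}\times[a,1))$ with $N=\overline{W^{n}\setminus C_{i}}$ and $a=a_{i}$, and both arguments conclude by deformation retracting onto the slice $W^{n}\times\{a_{i}\}$. You simply spell out the retraction in slightly more detail than the paper does.
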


\begin{proof}
It suffices to exhibit arbitrarily small connected clean neighborhood of
infinity in $W^{n}$ with finite homotopy type. Let $N\subseteq W^{n}$ be a
clean neighborhood of infinity and $a\in(0,1)$. By choosing $N$ small and $a$
close to $1$, we can obtain arbitrarily small neighborhoods of infinity in
$W^{n}\times\lbrack0,1)$ of the form
\[
V(N,a)=\left(  N\times\lbrack0,1)\right)  \cup\left(  W^{n}\times\lbrack
a,1)\right)  .
\]
Since $V\left(  N,a\right)  $ deformation retracts onto $W^{n}\times\left\{
a\right\}  $, it is connected and has finite homotopy type.
\end{proof}

\begin{example}
Consider the $(n+1)$-manifold $M^{n+1}=W^{n}\times\lbrack0,1)$, where $W^{n}$
is the Sternfeld $n$-manifold ($n\geq3$) described above. Then $\partial
M^{n+1}=W^{n}\times\{0\}$. A standard duality argument shows that \emph{every}
contractible open manifold of dimension $\geq2$ is $1$-ended. Let $\{N_{i}\}$
be a cofinal sequence of clean connected neighborhoods of infinity in $W^{n}$,
and for each $i\geq1$, let $V_{i}=V\left(  N_{i},\frac{i}{i+1}\right)  $, as
defined in the previous proof. By Seifert-van Kampen, each $V_{i}\cup\partial
M^{n+1}$ is simply connected, so the inverse sequence $\{\pi_{1}(\partial
M^{n+1}\cup V_{i})\}_{i\geq1}$ is pro-trivial, hence, stable.

To see that $M^{n+1}$ is not peripherally $\pi_{1}$-stable at infinity, first
assume that $n\geq5$. Then, if $M^{n+1}$ were peripherally $\pi_{1}$-stable at
infinity, it would be completable by Theorem \ref{Th: Completion Theorem}.
(The triviality of $\tau_{\infty}\left(  M^{n+1}\right)  $ is immediate since
$M^{n+1}$ is simply connected at infinity, which follows from the simple
connectivity of the $V_{i}$.) But, if $\widehat{M}^{n+1}$ were a completion,
then $W^{n}\times\left\{  0\right\}  \hookrightarrow$ $\partial\widehat{M}%
^{n+1}$would be an embedding into a closed $n$-manifold, contradicting
Sternfeld's theorem.

To obtain analogous examples when $n=3$ or $n=4$, we cannot rely on the
Manifold Completion Theorem. But a direct analysis of the fundamental group
calculations in Sternfeld's proof reveals that the peripheral
$\operatorname*{pro}$-$\pi_{1}$-systems arising in $W^{n}\times\lbrack0,1)$
are nonstable in those dimensions as well.
\end{example}

\section{Proof of Lemma
\ref{Lemma: equivalence of peripheral pi1-stability conditions}
\label{Section: Equivalence of various peripheral pi1-stability conditions}}

We now return to Lemma
\ref{Lemma: equivalence of peripheral pi1-stability conditions}, which asserts
that the two natural candidates for the definition of \textquotedblleft
peripherally $\pi_{1}$-stable at infinity\textquotedblright\ (the global
versus the local approach)\ are equivalent for inward tame manifolds. The
intuition behind the lemma is fairly simple. If $M^{m}$ contains arbitrarily
small \smallskip$0$-neigh\-bor\-hoods of infinity $N$ with the property that
each component $N^{j}$ is $\partial_{M}N^{j}$-$\pi_{1}$-stable at infinity,
then those components provide arbitrarily small neighborhoods of the ends
satisfying the necessary $\pi_{1}$-stability condition. Conversely, if each
end $\varepsilon$ has arbitrarily small strong \smallskip$0$-neigh\-bor\-hoods
$P$ that are $\partial_{M}P$-$\pi_{1}$-stable at infinity, we can use the
compactness of the set of ends (in the Freudenthal compactification) to find,
within any neighborhood of infinity, a finite collection $\left\{
P_{1},\cdots,P_{k}\right\}  $ of such neighborhoods which cover the end of
$M^{m}$. If we can do this so the $P_{i}$ are pairwise disjoint, we are
finished---just let $N=\cup P_{i}$. That is not as easy as one might hope, but
we are able to attain the desired conclusion by proving the following proposition.

\begin{proposition}
\label{Proposition: final step of peripheral pi1-stability characterization}%
Suppose $M^{m}$ is inward tame and each end $\varepsilon$ has arbitrarily
small strong $0$-neigh\-bor\-hoods $P_{\varepsilon}$ that are $\partial
_{M}P_{\varepsilon}$-$\pi_{1}$-stable at infinity. Then every strong partial
\smallskip$0$-neigh\-bor\-hood of infinity $Q\subseteq M^{m}$ is $\partial
_{M}Q$-$\pi_{1}$-stable at infinity.
\end{proposition}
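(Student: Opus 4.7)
The plan is to reduce the condition on $Q$ to the per-end stabilities already assumed, then reassemble them via Seifert--van Kampen. I would first observe that $Q$ is itself inward tame: any clean neighborhood of infinity $V\subseteq Q$, paired with the remaining components of the ambient clean neighborhood of infinity of $M^m$ that contains $Q$, forms a clean neighborhood of infinity in $M^m$, so inward tameness of $M^m$ forces $V$ to be finitely dominated. Applying Proposition \ref{Proposition: finitely many ends} to $Q$ (using finite generation of $H_*(Q)$ from finite domination, and of $H_*(\partial_M Q)$ via restriction of taming homotopies in the spirit of Lemma \ref{Lemma: inward tameness of deleted manifolds}), one concludes that $Q$ has only finitely many ends, say $\varepsilon_1,\dots,\varepsilon_k$.

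For each $j$, the hypothesis provides a nested cofinal sequence $P_1^{(j)}\supseteq P_2^{(j)}\supseteq\cdots$ of strong, $\partial_M P_i^{(j)}$-$\pi_1$-stable $0$-neighborhoods of $\varepsilon_j$ contained in $Q$. By passing to subsequences and using that distinct ends admit eventually disjoint neighborhoods in the Freudenthal compactification, one arranges that $\{P_i^{(1)},\dots,P_i^{(k)}\}$ is pairwise disjoint for each $i$, so $V_i=\bigsqcup_j P_i^{(j)}$ is a cofinal sequence of clean neighborhoods of infinity in $Q$. It then remains to show that $\{\pi_1(\partial_M Q\cup V_i)\}$ is a stable inverse sequence---equivalently, via Lemma \ref{Lemma: relA pro-pi1 versus Q-A pro-pi1}, that $\operatorname*{pro}$-$\pi_1(Q\setminus\partial_M Q)$ is stable at its unique end.

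The crux of the proof is this final stability claim. Seifert--van Kampen presents $\pi_1(\partial_M Q\cup V_i)$ as an iterated pushout of $\pi_1(\partial_M Q)$ with each $\pi_1(P_i^{(j)})$ over $\pi_1(\partial_M P_i^{(j)})$ (with obvious modifications if some $\partial_M P_i^{(j)}$ is disconnected). Along the bonding map $\pi_1(\partial_M Q\cup V_{i+1})\to\pi_1(\partial_M Q\cup V_i)$ the base factor $\pi_1(\partial_M Q)$ maps by the identity, while each end-factor map $\pi_1(P_{i+1}^{(j)})\to\pi_1(P_i^{(j)})$ is inclusion-induced. The $\partial_M P_i^{(j)}$-$\pi_1$-stability of $\varepsilon_j$ translates, again via Lemma \ref{Lemma: relA pro-pi1 versus Q-A pro-pi1}, into stability of the sequence $\{\pi_1(\partial_M P_{i_0}^{(j)}\cup P_i^{(j)})\}_i$ for each fixed $i_0$, and hence into eventual constancy of the image of $\pi_1(P_i^{(j)})$ inside $\pi_1(P_{i_0}^{(j)})$ modulo the subgroup carried by $\partial_M P_{i_0}^{(j)}$.

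The main obstacle will be the final step: propagating the per-end image-stabilities across the iterated pushout to obtain stability of the global sequence $\{\pi_1(\partial_M Q\cup V_i)\}$. Here I would exploit that $\pi_1(\partial_M Q)$ is the fixed ``base'' group across the pushout system, so that the image of $\pi_1(\partial_M Q\cup V_i)$ in $\pi_1(\partial_M Q\cup V_{i_0})$ is generated by $\pi_1(\partial_M Q)$ together with the images of the finitely many end-factors $\pi_1(P_i^{(j)})\to\pi_1(P_{i_0}^{(j)})$. Combining the Mittag--Leffler condition at each end with the amalgamated structure should then deliver stability of the pushout sequence, completing the proof.
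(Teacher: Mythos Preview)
Your argument has a genuine gap at the very first step: the claim that $Q$ has only finitely many ends is false in general, so the disjointness of the $P_i^{(j)}$ cannot be arranged. Consider $M^{m}=B^{m}\setminus C$ with $C\subseteq S^{m-1}$ a Cantor set (an example already mentioned in the paper). Take $Q$ to be a thin collar of $S^{m-1}$ in $B^{m}$ with $C$ removed; then $\operatorname*{Fr}Q\approx S^{m-1}$ is compact and connected, $\partial_{M}Q\approx S^{m-1}\setminus C$, and $Q\setminus\partial_{M}Q\approx S^{m-1}\times(0,1]$ is $1$-ended, so $Q$ is a strong partial $0$-neighborhood of infinity. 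But $Q$ has a Cantor set of ends. Your appeal to Proposition~\ref{Proposition: finitely many ends} fails because that proposition requires $\partial Q$ to have finitely generated homology, and here $H_{m-2}(\partial_{M}Q)\cong\widetilde{H}^{0}(C)$ is infinitely generated by Alexander duality. More conceptually, inward tameness of $M^{m}$ does \emph{not} force $\partial M^{m}$ (or $\partial_{M}Q$) to be inward tame; the Sternfeld-based examples in \S\ref{Section: Counterexample} make this explicit. The paper flags exactly this difficulty in the paragraph preceding Proposition~\ref{Proposition: final step of peripheral pi1-stability characterization}: obtaining \emph{disjoint} $P_{\varepsilon}$'s ``is not as easy as one might hope.''

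The paper's proof avoids the disjointness problem entirely. It passes to $Q\setminus\partial M^{m}$, which \emph{is} $1$-ended, uses inward tameness to obtain semistability of $\operatorname*{pro}$-$\pi_{1}$, and then reduces to showing pro-monomorphicity by exhibiting a $\pi_{1}$-core. The core is built from a \emph{finite but overlapping} cover $\{P_{\varepsilon_{1}},\dots,P_{\varepsilon_{k}}\}$ obtained from compactness of the Freudenthal boundary, placed in general position; the argument then subdivides a null-homotopy disk into small squares each landing in a single $P_{\varepsilon_{i}}$ (or a boundary collar), and pushes vertices, edges, and $2$-cells outward using the per-end stability and the semistability-based arc-pushing property (Corollary~\ref{Corollary: arc pushing property}). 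Your Seifert--van~Kampen pushout picture, by contrast, needs the pieces to meet only along $\partial_{M}Q$, which is precisely what cannot be arranged when $Q$ has infinitely many ends.
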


Our proof requires that we break the stability condition into a pair of weaker
conditions. An inverse sequence of groups is:

\begin{itemize}
\item \emph{semistable} (sometimes called \emph{pro-epimorphic}) if it is
pro-isomorphic to an inverse sequence of surjective homomorphisms;

\item \emph{pro-monomorphic} of it is pro-isomorphic to an inverse sequence of
injective homomorphisms.
\end{itemize}

\noindent It is an elementary fact that an inverse sequence is stable if and
only if it is both semistable and pro-monomorphic.

We will make use of the following topological characterizations of the above
properties, when applied to pro-$\pi_{1}$. In these theorems, a
\textquotedblleft space\textquotedblright\ should be locally compact, locally
connected, and metrizable.

\begin{proposition}
\label{Proposition: topological characterizations of pro-mono and pro-epi}Let
$X$ be a $1$-ended space and $r:[0,\infty)\rightarrow X$ a proper ray. Then
pro-$\pi_{1}\left(  X,r\right)  $ is

\begin{enumerate}
\item semistable if and only if, for every compact set $C\subseteq X$, there
exists a larger compact set $D\subseteq X$ such that for any compact set $E$
with $D\subseteq E\subseteq X$, every loop in $X\backslash D$ with base point
on $r$ can be pushed into $X\backslash E$ by a homotopy with image in
$X\backslash C$ keeping the base point on $r$, and

\item pro-monomorphic if and only if $X$ contains a compact set $C$ with the
property that, for every compact set $D$ with $C\subseteq D\subseteq X$, there
exists a compact set $E\supseteq D$ with the property that every loop in
$X\backslash E$ that contracts in $X\backslash C$ also contracts in
$X\backslash D$.
\end{enumerate}
\end{proposition}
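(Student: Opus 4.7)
The plan is to translate the standard group-theoretic characterizations of semistability (in its Mittag-Leffler form) and pro-monomorphicity (in its stabilized-kernels form) into the topological language of loops in complements of compacta. Fix a cofinal nested sequence $K_{1}\subseteq K_{2}\subseteq\cdots$ of compact subsets exhausting $X$. Since $X$ is one-ended, locally compact, and locally connected, for $i$ large enough $X\setminus K_{i}$ has a unique unbounded component $U_{i}$; after enlarging each $K_{i}$ to swallow the bounded complementary components, $\{U_{i}\}$ is a cofinal sequence of connected neighborhoods of infinity, each containing a tail of the proper ray $r$. Choosing $t_{i}$ with $r([t_{i},\infty))\subseteq U_{i}$, the inverse sequence
\[
\pi_{1}(U_{1},r(t_{1}))\longleftarrow \pi_{1}(U_{2},r(t_{2}))\longleftarrow \cdots,
\]
with bondings given by inclusion composed with change-of-basepoint along $r|_{[t_{i},t_{i+1}]}$, represents $\operatorname*{pro}$-$\pi_{1}(X,r)$. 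Because any compact $K\subseteq X$ sits inside some $K_{i}$, the $U_{i}$ refine all complements of compacta.

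For part (1), I would use the Mittag-Leffler reformulation: an inverse sequence $\{G_{i}\}$ is semistable iff for every $i$ there exists $j\geq i$ such that $\operatorname{Im}(G_{k}\to G_{i})=\operatorname{Im}(G_{j}\to G_{i})$ for all $k\geq j$. For the forward direction, given compact $C$, pick $i$ with $K_{i}\supseteq C$, let $j$ be the Mittag-Leffler index, and set $D=K_{j}$; for any $E\supseteq D$, choose $k$ with $K_{k}\supseteq E$. A loop $\alpha$ in $X\setminus D$ based at $r(t)$ sits, after sliding its basepoint forward along $r$ through $X\setminus D$ to $r(t_{j})$, in $\pi_{1}(U_{j},r(t_{j}))$; image stability forces its image in $\pi_{1}(U_{i},r(t_{i}))$ to come from $\pi_{1}(U_{k},r(t_{k}))$, and unwinding this produces a based homotopy in $U_{i}\subseteq X\setminus C$ from $\alpha$ to a loop in $U_{k}\subseteq X\setminus E$, with basepoint tracing along $r$. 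For the converse, given $i$, apply the geometric hypothesis with $C=K_{i}$ to obtain $D$; choose $j$ with $K_{j}\supseteq D$; then for every $k\geq j$ apply the hypothesis with $E=K_{k}$ to deduce the image-stability equation at level $i$.

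For part (2), I would use the dual characterization: $\{G_{i}\}$ is pro-monomorphic iff there exists $i_{0}$ such that for every $j\geq i_{0}$ there exists $k\geq j$ with $\ker(G_{k}\to G_{i_{0}})=\ker(G_{k}\to G_{j})$. The inclusion $\ker(G_{k}\to G_{j})\subseteq\ker(G_{k}\to G_{i_{0}})$ is automatic since $G_{k}\to G_{i_{0}}$ factors through $G_{j}$, so the only content is the reverse inclusion. For the forward direction, set $C=K_{i_{0}}$; given compact $D\supseteq C$, pick $j$ with $K_{j}\supseteq D$, let $k$ be the index supplied by the kernel condition, and take $E=K_{k}$. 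A loop in $X\setminus E$ that contracts in $X\setminus C$ represents an element of $\ker(\pi_{1}(U_{k})\to\pi_{1}(U_{i_{0}}))$ after a basepoint slide along $r$, hence (by the kernel condition) of $\ker(\pi_{1}(U_{k})\to\pi_{1}(U_{j}))$, giving a null-homotopy in $U_{j}\subseteq X\setminus D$. The converse is the direct reverse translation: the geometric hypothesis, specialized to $C=K_{i_{0}}$, $D=K_{j}$, $E=K_{k}$, yields precisely the kernel equality.

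The main obstacle will be the careful bookkeeping with basepoints, since the inverse sequence has fixed basepoints $r(t_{i})$ while the geometric statements permit the loop's basepoint to be any point of $r$. I would handle this by using sub-arcs of $r$ as change-of-basepoint paths: because $r$ is proper and eventually enters every $U_{i}$, a loop based at $r(t)\in X\setminus D$ can be slid along $r|_{[t,t']}\subseteq X\setminus D$ to a loop based at $r(t')\in U_{j}$ without leaving $X\setminus D$, and this slide is exactly the change-of-basepoint used in the bonding maps. Conversely, the based homotopies produced by the algebraic conditions can be concatenated with basepoint slides along $r$ to yield the geometric homotopies of the proposition, all kept inside the correct neighborhood. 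A minor subtlety, namely that a basepoint $r(t)$ for small $t$ might lie in a bounded component of $X\setminus D$, is dissolved by first sliding the basepoint outward along $r$ into $U_{j}$ before beginning any argument.
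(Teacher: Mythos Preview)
Your proposal is essentially correct and follows the standard route, but note that the paper does not actually prove this proposition: immediately after the statement it simply says ``These are standard. See, for example \cite{Geo08} or \cite{Gui16}.'' Your argument---translating the Mittag-Leffler condition (for semistability) and the kernel-stabilization condition (for pro-monomorphicity) into statements about loops in complements of compacta via a cofinal exhaustion---is exactly the kind of proof one finds in those references, so there is nothing to compare.

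One small repair is needed in your basepoint bookkeeping. You assert that a loop based at $r(t)\in X\setminus D$ can be slid along $r|_{[t,t']}\subseteq X\setminus D$, but properness of $r$ does not prevent $r$ from dipping back into $D$ between $t$ and $t'$. The fix is to enlarge $D$: since $r$ is proper, $r^{-1}(K_i)$ is bounded, say by $T$, so $r([T,\infty))\subseteq U_i\subseteq X\setminus C$; now take $D$ to contain both $K_j$ and $r([0,T])$. Then any basepoint $r(t)\in X\setminus D$ forces $t>T$, and the slide along $r|_{[t,t']}$ stays in $X\setminus C$, which is all the statement requires. With this adjustment your outline goes through.
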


These are standard. See, for example \cite{Geo08} or \cite{Gui16}. In the case
that pro-$\pi_{1}(X,r)$ is pro-monomorphic, the compact set $C$ in the above
proposition is called a $\pi_{1}$\emph{-core }for $X$. Notice that, by
Proposition
\ref{Proposition: topological characterizations of pro-mono and pro-epi}, the
property of ($1$-ended) $X$ having pro-monomorphic pro-$\pi_{1}(X,r)$ is
independent of the choice of $r$.

It is a non-obvious (but standard) fact that having semistable pro-$\pi
_{1}(X,r)$ is also independent of the choice of $r$. As for the
characterization of semistable pro-$\pi_{1}(X,r)$, we are mostly interested in
the following easy corollary.

\begin{corollary}
\label{Corollary: arc pushing property}If $X$ is a $1$-ended space and
pro-$\pi_{1}\left(  X,r\right)  $ is semistable for some (hence every) proper
ray $r$, then for each compact set $C\subseteq X$, there is a larger compact
set $D\subseteq X$ such that, for every compact set $E\subseteq X$ and every
path $\lambda:\left[  0,1\right]  \rightarrow X\backslash D$ with
$\lambda\left(  \left\{  0,1\right\}  \right)  \subseteq E$, there is a path
homotopy in $X\backslash C$ taking $\lambda$ to a path $\lambda^{\prime}$ in
$X\backslash E.$
\end{corollary}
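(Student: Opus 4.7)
The plan is to deduce the path-pushing conclusion from the loop-pushing characterization in Proposition~\ref{Proposition: topological characterizations of pro-mono and pro-epi}(1) by closing each path $\lambda$ into a loop via its endpoints, pushing that loop, and then restricting the resulting homotopy back to the original parameter range. I read ``path homotopy'' here as a homotopy through maps $[0,1]\to X$ (not rel endpoints), since the conclusion $\lambda'\subseteq X\setminus E$ would otherwise contradict $\lambda(\{0,1\})\subseteq E$.

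First I would apply Proposition~\ref{Proposition: topological characterizations of pro-mono and pro-epi}(1) to the given compact set $C$ to obtain a compact $D_{0}\supseteq C$ with the property that every loop in $X\setminus D_{0}$ based on $r$ can be pushed into $X\setminus E_{0}$, for any compact $E_{0}\supseteq D_{0}$, by a homotopy in $X\setminus C$ whose basepoint remains on $r$. Next, using $1$-endedness together with local compactness and local connectedness, I would enlarge $D_{0}$ to a compact $D\supseteq D_{0}$ with $X\setminus D$ connected, hence path-connected; concretely, fix a cofinal sequence of connected neighborhoods of infinity, choose the first one $U$ that is disjoint from $D_{0}$, and set $D=X\setminus U$. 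Also pick $t_{0}$ large enough that $r([t_{0},\infty))\subseteq X\setminus D$.

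Now, given a compact $E\subseteq X$ and a path $\lambda:[0,1]\to X\setminus D$ with $\lambda(\{0,1\})\subseteq E$, I would connect $\lambda(0)$ and $\lambda(1)$ to $r(t_{0})$ by paths $\alpha,\beta$ in the path-connected set $X\setminus D$, forming a loop $L=\alpha\cdot\lambda\cdot\beta^{-1}$ based at $r(t_{0})$ and lying in $X\setminus D\subseteq X\setminus D_{0}$. Applying the semistability conclusion with $E_{0}=D\cup E$ produces a homotopy $H:[0,1]\times[0,1]\to X\setminus C$ from $L$ to a loop $L'\subseteq X\setminus E_{0}\subseteq X\setminus E$, with basepoint on $r$ throughout. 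Parametrizing so that $L|_{[a,b]}$ is $\lambda$, the restriction of $H$ to $[a,b]\times[0,1]$ (reparametrized affinely in the first variable) is a homotopy in $X\setminus C$ from $\lambda$ to $\lambda':=L'|_{[a,b]}\subseteq X\setminus E$, as required.

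The main obstacle I anticipate is the enlargement step: producing a compact $D\supseteq D_{0}$ whose complement is connected, so that we can actually route $\alpha$ and $\beta$ outside $D$. This is the only place $1$-endedness (beyond what is already encoded by semistability) enters, and it rests on the standard fact that in a nice $1$-ended space every compact set is contained in a compact set with connected complement; the remaining work is parameter bookkeeping.
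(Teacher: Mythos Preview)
The paper does not actually prove this corollary; it is introduced as an ``easy corollary'' of the semistability characterization in Proposition~\ref{Proposition: topological characterizations of pro-mono and pro-epi} and left without argument. Your approach --- enlarge $D_{0}$ so that $X\setminus D$ is path-connected, close $\lambda$ into a based loop on $r$, push the loop via semistability, and restrict --- is exactly the standard argument one would expect, and it is correct as written.

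Two remarks. First, your observation that the printed hypothesis $\lambda(\{0,1\})\subseteq E$ is incompatible with a rel-endpoints reading of ``path homotopy'' is right, but the resolution is almost certainly a typo rather than a nonstandard use of ``path homotopy'': compare condition~(iii) in the proof of Proposition~\ref{Proposition: final step of peripheral pi1-stability characterization}, where the corollary is invoked with ``endpoints \emph{outside} $E$''. Second, even under the intended rel-endpoints reading, your argument does not quite deliver that conclusion, since the tracks of $\lambda(0)$ and $\lambda(1)$ under the restricted loop-homotopy lie only in $X\setminus C$, not in $X\setminus E$. This is not a serious gap: the free-homotopy version you prove (which never uses the endpoint hypothesis at all) already suffices for the edge-pushing step in the paper's application, and in that application the endpoints have been pushed to the unbounded part of the complement of $E$, so one can choose the connecting paths $\alpha,\beta$ in $X\setminus E$ and recover the rel-endpoints conclusion directly.
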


We are now ready for our primary task.

\begin{proof}
[Proof of Proposition
\ref{Proposition: final step of peripheral pi1-stability characterization}]Let
$Q$ be a strong partial \smallskip$0$-neigh\-bor\-hood of infinity in $M^{m}$.
By Lemma \ref{Lemma: relA pro-pi1 versus Q-A pro-pi1}, proving that $Q$ is
$\partial_{M}Q$-$\pi_{1}$-stable at infinity is equivalent to proving that the
$1$-ended space $Q\backslash\partial M^{m}$ has stable $\operatorname*{pro}%
$-$\pi_{1}$. We will take the latter approach.

By Lemma \ref{Lemma: inward tameness of deleted manifolds} $Q\backslash
\partial M^{m}$ is inward tame, so a modification of the argument in
\cite[Prop. 3.2]{GuTi03} ensures that $\operatorname*{pro}$-$\pi_{1}\left(
Q\backslash\partial M^{m},r\right)  $ is semistable. It is therefore enough to
show that $\operatorname*{pro}$-$\pi_{1}\left(  Q\backslash\partial
M^{m},r\right)  $ is pro-monomorphic. We will do that by verifying the
condition described in Proposition
\ref{Proposition: topological characterizations of pro-mono and pro-epi},
i.e., we will show that $Q\backslash\partial M^{m}$ contains a $\pi_{1}$-core.

By hypothesis, each end $\varepsilon$ of $Q$ has a strong \smallskip
$0$-neigh\-bor\-hood $P_{\varepsilon}$ which is $\partial_{M}P_{\varepsilon}%
$-$\pi_{1}$-stable at infinity and lies in $\operatorname*{Int}_{M}Q$. Since
the set of ends of $Q$ is compact in the Freudenthal compactification, there
is a finite subcollection $\{P_{\varepsilon_{i}}\}_{i=1}^{k}$ whose union is a
neighborhood of infinity in $Q$. Place the collection of submanifolds
$\left\{  P_{\varepsilon_{i}}\right\}  _{i=1}^{k}$ in general
position.\medskip

\noindent\textsc{Claim 1. }\emph{For each }$\Omega\subseteq\left\{
1,\cdots,k\right\}  $\emph{ the set }$\cap_{j\in\Omega}P_{\varepsilon_{j}}%
$\emph{ has finitely many components, each of which is a clean codimension
}$0$\emph{ submanifold of }$M^{m}$\emph{.\medskip}

General position ensures that each component is a clean codimension $0$
submanifold of $M^{m}$. Since each $P_{\varepsilon_{j}}$ is a closed subset of
$M^{m}$ each component $T$ of $\cap_{j\in\Omega}P_{\varepsilon_{j}}$ is closed
in $M^{m}$, and since $T$ cannot also be open in $M^{m}$ it must have nonempty
frontier. Since $\left\{  P_{\varepsilon_{j}}\right\}  _{j\in\Omega}$ is in
general position, so also is the collection of (compact) frontiers, $\left\{
\operatorname*{Fr}P_{\varepsilon_{j}}\right\}  _{j\in\Omega}$. So, for each
$i\neq j$ in $\Omega$, $\Delta_{i,j}=\operatorname*{Fr}P_{\varepsilon_{i}}%
\cap\operatorname*{Fr}P_{\varepsilon_{j}}$ is a clean codimension $1$
submanifold of $\operatorname*{Fr}P_{\varepsilon_{i}}$ and $\operatorname*{Fr}%
P_{\varepsilon_{j}}$. The union of these $\Delta_{i,j}$ separate $\cup
_{j=1}^{k}\operatorname*{Fr}P_{\varepsilon_{j}}$ into finitely many pieces,
and since the frontier of each $T$ is a union of these pieces, there can only
be finitely many such $T$.

Choose an embedding $b:\partial M^{m}\times\lbrack0,1]\rightarrow M^{m}$ with
$b\left(  x,0\right)  =x$ for all $x\in\partial M^{m}$ and whose image $B$ is
a regular neighborhood of $\partial M^{m}$ in $M^{m}$. With some additional
care, arrange that $B$ intersects: $Q$ in $b\left(  \partial_{M}Q\times\left[
0,1\right]  \right)  $; each $P_{\varepsilon_{i}}$ in $b\left(  \partial
_{M}P_{\varepsilon_{i}}\times\left[  0,1\right]  \right)  $; and (more
specifically) each component $T$ of each finite intersection $\cap_{j\in
\Omega}P_{\varepsilon_{j}}$ in $b\left(  \partial_{M}T\times\left[
0,1\right]  \right)  $. For each $0\leq s<t\leq1$, let $B^{[s,t]}=b\left(
\partial M^{m}\times\lbrack s,t]\right)  $, $B^{(s,t)}=b\left(  \partial
M^{m}\times(s,t)\right)  $, etc. For $A\subseteq\partial M^{m}$, let
$B_{A}=b\left(  A\times\lbrack0,1]\right)  $ and define $B_{A}^{[s,t]}$,
$B_{A}^{(s,t)}$, etc. analogously.

By hypothesis and Proposition
\ref{Proposition: topological characterizations of pro-mono and pro-epi} we
can choose a clean codimension $0$ compact $\pi_{1}$-core $C_{i}$ for each
$P_{\varepsilon_{i}}\backslash\partial M^{m}$. Then choose $t$ so small that
$B^{[0,t]}\cap(\cup_{i=1}^{k}C_{i})=\varnothing$. Let $C_{0}^{\prime}%
\equiv\overline{Q\backslash\cup_{i=1}^{k}P_{\varepsilon_{i}}}$, then let
$C_{0}=C_{0}^{\prime}\backslash B^{[0,t)}$ so that $C_{0}$ is a compact clean
codimension $0$ submanifold of $Q\backslash\partial M^{m}$. Let $C=\cup
_{i=0}^{k}C_{i}\subseteq Q\backslash\partial M^{m}$. Notice that the
collection $\left\{  B_{\partial_{M}Q}^{[0,t]},P_{\varepsilon_{1}}%
,\cdots,P_{\varepsilon_{k}}\right\}  $ covers $Q\backslash\operatorname*{Int}%
_{Q}C$.

Choose a clean codimension $0$ compact submanifold of $D^{\prime}\subseteq
Q\backslash\partial M^{m}$ so large that

\begin{itemize}
\item[i)] $\operatorname*{Int}_{Q}D^{\prime}\supseteq C$,

\item[ii)] $D^{\prime}$ contains every compact component of $\cap_{j\in\Omega
}P_{\varepsilon_{j}}$ for all $\Omega\subseteq\left\{  1,\cdots,k\right\}  $, and

\item[iii)] for any compact set $E\subseteq Q\backslash\partial M^{m}$ such
that $D^{\prime}\subseteq E$, if $\lambda$ is a path in $T\backslash\partial
M^{m}$, where $T$ is an unbounded component of $P_{\varepsilon_{i}}\cap
P_{\varepsilon_{j}}$ for some $i,j\in\left\{  1,\cdots,k\right\}  $, and
$\lambda$ lies outside $D^{\prime}$ with endpoints outside $E$, then there is
a path homotopy of $\lambda$ in $(T\backslash\partial M^{m})\backslash C$
pushing $\lambda$ outside $E$. (This uses Corollary
\ref{Corollary: arc pushing property} and the fact that each $T$, being a
clean partial neighborhood of infinity in $M^{m}$, has the property that
$T\backslash\partial M^{m}$ has finitely many ends, each with semistable
$\operatorname*{pro}$-$\pi_{1}$.)
\end{itemize}

Now choose a compact set $D\subseteq Q\backslash\partial M^{m}$ such that

\begin{itemize}
\item[i$^{\prime}$)] $D\supseteq D^{\prime}$,

\item[ii$^{\prime}$)] for every $\Omega\subseteq\left\{  1,\cdots,k\right\}  $
and every unbounded component $T$ of $\cap_{j\in\Omega}P_{\varepsilon_{j}}$,
each $x\in(T\backslash\partial M^{m})\backslash D$ can be pushed to infinity
in $(T\backslash\partial M^{m})\backslash D^{\prime}$. (This is possible since
there are only finitely such $T$.)

\item[iii$^{\prime}$)] \label{Property iii')}if $x=b\left(  y,t_{0}\right)
\in B\backslash D$, then $b\left(  y\times\left[  0,t_{0}\right]  \right)
\cap D^{\prime}=\varnothing.\medskip$
\end{itemize}

\noindent\textsc{Claim 2. }$D$\emph{ is a }$\pi_{1}$\emph{-core for
}$Q\backslash\partial M^{m}$\emph{.}$\medskip$

Toward that end, let $F$ be a compact subset of $Q\backslash\partial M^{m}$
containing $D$, then choose $G\subseteq Q\backslash\partial M^{m}$ to be an
even larger compact set with the following property:

\begin{itemize}
\item[(\dag)] for each $i\in\left\{  1,\cdots,k\right\}  $, loops in
$P_{\varepsilon_{i}}\backslash\partial M^{m}$ lying outside $G$ which contract
in $(P_{\varepsilon_{i}}\backslash\partial M^{m})\backslash C$, also contract
in $(P_{\varepsilon_{i}}\backslash\partial M^{m})\backslash F$.
\end{itemize}

Let $\alpha:\left[  0,1\right]  \times\left[  0,1\right]  \rightarrow\left(
Q\backslash\partial M^{m}\right)  \backslash D$. The interiors of sets
$\left\{  B_{\partial_{M}Q}^{[0,t]},P_{\varepsilon_{1}},\cdots,P_{\varepsilon
_{k}}\right\}  $ cover $\left(  Q\backslash\partial M^{m}\right)  \backslash
D$, so we can subdivide $\left[  0,1\right]  ^{2}$ into subsquares $\left\{
R_{t}\right\}  $ so small that the image of each $R_{t}$ lies in $B^{(0,t)}$
or one of the $P_{\varepsilon_{i}}\backslash\partial M^{m}$ and hence, in
$B^{(0,t)}\backslash D$ or one of the $(P_{\varepsilon_{i}}\backslash\partial
M^{m})\backslash D$. Since each vertex of this subdivision is sent to a point
$x$ in $B^{(0,t)}\backslash D$ and/or $T\backslash D$, where $T$ is an
unbounded component of the intersection of the $P_{\varepsilon_{i}}$ which
contain the images of the subsquares containing that vertex, then by the
choice of $D$ we can push $x$ into $(Q\backslash\partial M^{m})\backslash G$
along a path that does not leave $T$ and does not intersect $D^{\prime}$. In
those cases where $x=b\left(  y,t_{0}\right)  \in B^{(0,t)}\backslash D$, push
$x$ out of $G$ along $b\left(  y\times\left(  0,1\right)  \right)  $, so that
the track also stays in $B^{(0,t)}\backslash D^{\prime}$, by property
(iii$^{\prime}$).

Doing the above for each vertex adjusts $\alpha$ up to homotopy in
$(Q\backslash\partial M^{m})\backslash D^{\prime}$ so that each vertex of the
subdivision is taken into $(Q\backslash\partial M^{m})\backslash G$ and each
$R_{t}$ is still taken into the same $P_{\varepsilon_{i}}$ (or $B^{(0,t)}$) as before.

Next we move to the 1-skeleton of our subdivision of $\left[  0,1\right]
^{2}$. If an edge $e$ is the intersection $R_{t}\cap R_{t^{\prime}}$ of two
squares, i.e., $e$ is not in $\partial(\left[  0,1\right]  ^{2})$, we use
property (iii) to adjust $\alpha$ up to homotopy so $e$ is mapped into
$(Q\backslash\partial M^{m})\backslash G$, noting that this homotopy may
causes the \textquotedblleft new\textquotedblright\ $\alpha$ to drift into
$(Q\backslash\partial M^{m})\backslash C$. (If $e$ is sent into $B^{(0,t)}$,
we can use (iii$^{\prime}$) to ensure that the push stays in $B^{(0,t)}%
\backslash D^{\prime}$ as well.)

Do the above for each edge until the entire 1-skeleton of the subdivision of
$\left[  0,1\right]  ^{2}$ is mapped into $(Q\backslash\partial M^{m}%
)\backslash G$. The image of $\alpha$ now lies in $(Q\backslash\partial
M^{m})\backslash C$. Notice that the restriction of $\alpha$ to each $R_{t}$
is a map of a disk into a single $P_{\varepsilon_{i}}$ (or $B^{(0,t)}$)
missing $C_{i}$ with boundary being mapped into $P_{\varepsilon_{i}}\backslash
G$. So by the choice of $G$, we may redefine $\alpha$ on $R_{t}$ to be the
same on its boundary, but to take $R_{t}$ into $P_{\varepsilon_{i}}\backslash
F$ or $B^{(0,t)}\backslash F$. Assembling the\ $\left.  \alpha\right\vert
_{R_{t}}$ we get a map $\alpha^{\prime}:\left[  0,1\right]  \times\left[
0,1\right]  \rightarrow\left(  Q\backslash\partial M^{m}\right)  \backslash F$
that agrees with $\alpha$ on $\partial(\left[  0,1\right]  ^{2})$.
\end{proof}

\bibliographystyle{amsalpha}
\bibliography{Biblio}

\providecommand{\bysame}{\leavevmode\hbox to3em{\hrulefill}\thinspace}
\providecommand{\MR}{\relax\ifhmode\unskip\space\fi MR }
\providecommand{\MRhref}[2]{%
  \href{http://www.ams.org/mathscinet-getitem?mr=#1}{#2}
}
\providecommand{\href}[2]{#2}
\begin{thebibliography}{ADG97}

\bibitem[ADG97]{ADG97}
F.~D. Ancel, M.~W. Davis, and C.~R. Guilbault, \emph{{${\rm CAT}(0)$}
  reflection manifolds}, Geometric topology ({A}thens, {GA}, 1993), AMS/IP
  Stud. Adv. Math., vol.~2, Amer. Math. Soc., Providence, RI, 1997,
  pp.~441--445. \MR{1470741}

\bibitem[AG99]{AnGu99}
Fredric~D. Ancel and Craig~R. Guilbault, \emph{{$Z$}-compactifications of open
  manifolds}, Topology \textbf{38} (1999), no.~6, 1265--1280. \MR{1690157}

\bibitem[BM91]{BeMe91}
Mladen Bestvina and Geoffrey Mess, \emph{The boundary of negatively curved
  groups}, J. Amer. Math. Soc. \textbf{4} (1991), no.~3, 469--481. \MR{1096169}

\bibitem[CKS12]{CKS12}
Jack~S. Calcut, Henry~C. King, and Laurent~C. Siebenmann, \emph{Connected sum
  at infinity and {C}antrell-{S}tallings hyperplane unknotting}, Rocky Mountain
  J. Math. \textbf{42} (2012), no.~6, 1803--1862. \MR{3028764}

\bibitem[Coh73]{Coh73}
Marshall~M. Cohen, \emph{A course in simple-homotopy theory}, Springer-Verlag,
  New York-Berlin, 1973, Graduate Texts in Mathematics, Vol. 10. \MR{0362320}

\bibitem[CP95]{CaPe95}
Gunnar Carlsson and Erik~Kj{\ae}r Pedersen, \emph{Controlled algebra and the
  {N}ovikov conjectures for {$K$}- and {$L$}-theory}, Topology \textbf{34}
  (1995), no.~3, 731--758. \MR{1341817}

\bibitem[CS76]{ChSi76}
T.~A. Chapman and L.~C. Siebenmann, \emph{Finding a boundary for a {H}ilbert
  cube manifold}, Acta Math. \textbf{137} (1976), no.~3-4, 171--208.
  \MR{0425973}

\bibitem[Eps66]{Eps66}
D.~B.~A. Epstein, \emph{The degree of a map}, Proc. London Math. Soc. (3)
  \textbf{16} (1966), 369--383. \MR{0192475}

\bibitem[Fer00]{Fer00}
Steven~C. Ferry, \emph{Stable compactifications of polyhedra}, Michigan Math.
  J. \textbf{47} (2000), no.~2, 287--294. \MR{1793625}

\bibitem[FL05]{FaLa05}
F.~T. Farrell and J.-F. Lafont, \emph{E{Z}-structures and topological
  applications}, Comment. Math. Helv. \textbf{80} (2005), no.~1, 103--121.
  \MR{2130569}

\bibitem[FQ90]{FrQu90}
Michael~H. Freedman and Frank Quinn, \emph{Topology of 4-manifolds}, Princeton
  Mathematical Series, vol.~39, Princeton University Press, Princeton, NJ,
  1990. \MR{1201584}

\bibitem[Fre82]{Fre82}
Michael~Hartley Freedman, \emph{The topology of four-dimensional manifolds}, J.
  Differential Geom. \textbf{17} (1982), no.~3, 357--453. \MR{679066}

\bibitem[FW95]{FeWe95}
Steven~C. Ferry and Shmuel Weinberger, \emph{A coarse approach to the {N}ovikov
  conjecture}, Novikov conjectures, index theorems and rigidity, {V}ol.\ 1
  ({O}berwolfach, 1993), London Math. Soc. Lecture Note Ser., vol. 226,
  Cambridge Univ. Press, Cambridge, 1995, pp.~147--163. \MR{1388300}

\bibitem[Geo08]{Geo08}
R.~Geoghegan, \emph{Topological methods in group theory}, Graduate Texts in
  Mathematics, vol. 243, Springer, New York, 2008. \MR{2365352}

\bibitem[GM18]{GuMo18}
Craig~R. Guilbault and Molly~A. Moran, \emph{Proper homotopy types and
  z-boundaries of spaces admitting geometric group actions}, Expositiones
  Mathematicae (2018).

\bibitem[GT03]{GuTi03}
C.~R. Guilbault and F.~C. Tinsley, \emph{Manifolds with non-stable fundamental
  groups at infinity. {II}}, Geom. Topol. \textbf{7} (2003), 255--286.
  \MR{1988286}

\bibitem[GT06]{GuTi06}
\bysame, \emph{Manifolds with non-stable fundamental groups at infinity.
  {III}}, Geom. Topol. \textbf{10} (2006), 541--556. \MR{2224464}

\bibitem[Gu]{Gu18b}
Shijie Gu, \emph{Contractible open manifolds which embed in no compact, locally
  connected and locally 1-connected metric space}, arXiv:1809.02628 [math.GT].

\bibitem[Gui00]{Gui00}
Craig~R. Guilbault, \emph{Manifolds with non-stable fundamental groups at
  infinity}, Geom. Topol. \textbf{4} (2000), 537--579. \MR{1800296}

\bibitem[Gui01]{Gui01}
C.~R. Guilbault, \emph{A non-{$Z$}-compactifiable polyhedron whose product with
  the {H}ilbert cube is {$Z$}-compactifiable}, Fund. Math. \textbf{168} (2001),
  no.~2, 165--197. \MR{1852740}

\bibitem[Gui07]{Gui07b}
Craig~R. Guilbault, \emph{Products of open manifolds with {$\mathbb R$}}, Fund.
  Math. \textbf{197} (2007), 197--214. \MR{2365887}

\bibitem[Gui16]{Gui16}
\bysame, \emph{Ends, shapes, and boundaries in manifold topology and geometric
  group theory}, Topology and geometric group theory, Springer Proc. Math.
  Stat., vol. 184, Springer, [Cham], 2016, pp.~45--125. \MR{3598162}

\bibitem[Hat02]{Hat02}
Allen Hatcher, \emph{Algebraic topology}, Cambridge University Press,
  Cambridge, 2002. \MR{1867354}

\bibitem[Ker23]{Ker23}
B.~Ker{\'e}kj{\'a}rt{\'o}, \emph{Vorlesungen uber topologie, i}, Die
  Grundlehren math. Wiss., vol.~8, Springer, 1923.

\bibitem[KS88]{KwSc88}
Slawomir Kwasik and Reinhard Schultz, \emph{Desuspension of group actions and
  the ribbon theorem}, Topology \textbf{27} (1988), no.~4, 443--457.
  \MR{976586}

\bibitem[O'B83]{Obr83}
Gary O'Brien, \emph{The missing boundary problem for smooth manifolds of
  dimension greater than or equal to six}, Topology Appl. \textbf{16} (1983),
  no.~3, 303--324. \MR{722123}

\bibitem[Qui82]{Qui82b}
Frank Quinn, \emph{Ends of maps. {III}. {D}imensions {$4$}\ and {$5$}}, J.
  Differential Geom. \textbf{17} (1982), no.~3, 503--521. \MR{679069}

\bibitem[Ric63]{Ric63}
Ian Richards, \emph{On the classification of noncompact surfaces}, Trans. Amer.
  Math. Soc. \textbf{106} (1963), 259--269. \MR{0143186}

\bibitem[RS82]{RoSa82}
Colin~Patrick Rourke and Brian~Joseph Sanderson, \emph{Introduction to
  piecewise-linear topology}, Springer Study Edition, Springer-Verlag,
  Berlin-New York, 1982, Reprint. \MR{665919}

\bibitem[Sie65]{Sie65}
Laurence~Carl Siebenmann, \emph{The obstructuon to finding a boundary for an
  open manifold of dimension greater than five}, ProQuest LLC, Ann Arbor, MI,
  1965, Thesis (Ph.D.)--Princeton University. \MR{2615648}

\bibitem[Ste77]{Ste77}
Robert~William Sternfeld, \emph{A contractible open n-manifold that embeds in
  no compact n-manifold}, ProQuest LLC, Ann Arbor, MI, 1977, Thesis
  (Ph.D.)--The University of Wisconsin - Madison. \MR{2627427}

\bibitem[Tuc74]{Tuc74}
Thomas~W. Tucker, \emph{Non-compact {$3$}-manifolds and the missing-boundary
  problem}, Topology \textbf{13} (1974), 267--273. \MR{0353317}

\bibitem[Wal65]{Wal65}
C.~T.~C. Wall, \emph{Finiteness conditions for {${\rm CW}$}-complexes}, Ann. of
  Math. (2) \textbf{81} (1965), 56--69. \MR{0171284}

\bibitem[Wei87]{Wei87}
Shmuel Weinberger, \emph{On fibering four- and five-manifolds}, Israel J. Math.
  \textbf{59} (1987), no.~1, 1--7. \MR{923658}

\end{thebibliography}
{}

\end{document}